\documentclass[11pt]{article}



\let\endproof\relax

\usepackage{fullpage}
\usepackage{amssymb}
\usepackage{amsmath}
\usepackage{latexsym}
\usepackage{graphicx}
\usepackage{color}
\usepackage{url}

\usepackage{Shorthands}


\usepackage{listings}
\usepackage{courier}
\usepackage{comment}
\usepackage[ruled]{algorithm2e}
\SetKwInput{KwInput}{Input}                
\SetKwInput{KwOutput}{Output}              

\theoremstyle{definition}

\theoremstyle{definition}

\theoremstyle{remark}



%

\renewcommand{\hat}{\widehat}

\usepackage{mathtools}



\newcommand{\thickbar}[1]{\bm\bar{#1}}

\newcommand{\hattheta}{\ensuremath{\hat{\theta}}}

\newcommand{\hatSigmaT}{\ensuremath{\hat{\Sigma}_T}}
\newcommand{\barSigma}{\ensuremath{\thickbar{\Sigma}}}
\newcommand{\barX}{\ensuremath{\thickbar{X}}}
\newcommand{\starG}{\ensuremath{\calG_{\star}}}

\newcommand{\dx}{d_{\mathsf{x}}}

\newcommand{\dtheta}{d_{\mathsf{\theta}}}
\DeclareMathOperator{\gradtheta}{\nabla_{\theta}}
\DeclareMathOperator{\Btheta}{B_{\theta}}
\renewcommand{\P}{\ensuremath{\mathbf{P}}}
\newcommand{\E}{\ensuremath{\mathbf{E}}}
\usepackage[]{footmisc}
\renewcommand{\norm}[1]{\ensuremath{\| #1 \|}}
\newcommand{\normbig}[1]{\ensuremath{\bigg\| #1 \bigg\|}}
\DeclareMathOperator{\Gammadep}{\Gamma_{\textup{dep}}}
\newcommand{\sumt}{\ensuremath{\sum_{t=0}^{T-1}}}
\usepackage{chngcntr}
\usepackage{apptools}
\AtAppendix{\counterwithin{lemma}{section}}
\AtAppendix{\counterwithin{definition}{section}}
\newcommand{\pars}[1]{\ensuremath{\left( #1 \right)}}
\newcommand{\bracks}[1]{\ensuremath{\left[ #1 \right]}}
\DeclareMathOperator{\T}{\intercal}

\newcommand{\sfX}{\mathsf{X}}

\newcommand{\sfZ}{\mathsf{Z}}
\newcommand{\sfP}{\mathsf{P}}

\newcommand{\sfM}{\mathsf{M}}

\newcommand{\e}{\varepsilon}
\usepackage[page]{appendix}
\usepackage{etoolbox}
\patchcmd{\endproof}
  {\popQED}
  {\par\popQED}
  {}
  {}

\newcommand{\normGamma}{\ensuremath{\norm{\Gamma_{\textup{dep}}(\sfP_{\sfX})}}}

\author{Charis Stamouli\thanks{The authors are with the Department of Electrical and Systems Engineering, University of Pennsylvania, Philadelphia, PA 19104, USA. Emails: \texttt{\{stamouli,ingvarz,pappasg\}@seas.upenn.edu}.} , Ingvar Ziemann\footnotemark[1] , and George J. Pappas\footnotemark[1] }


\date{}

\usepackage[utf8]{inputenc} 
\usepackage[T1]{fontenc} 

\usepackage{fullpage}

\title{Rate-Optimal Non-Asymptotics for the \\ Quadratic Prediction Error Method}

\usepackage{Shorthands}



\begin{document}

\maketitle

\begin{abstract}
We study the quadratic prediction error method---i.e., nonlinear least squares---for a class of time-varying parametric predictor models satisfying a certain identifiability condition. While this method is known to asymptotically achieve the optimal rate
for a wide range of problems, there have been no \emph{non-asymptotic} results matching these optimal rates outside of a select few, typically linear, model classes.
By leveraging modern tools from learning with dependent data, we provide the first rate-optimal non-asymptotic analysis of this method for our more general setting of nonlinearly parametrized model classes. Moreover, we show that our results can be applied to a particular class of \textit{identifiable} AutoRegressive Moving Average (ARMA) models, resulting in the first optimal non-asymptotic rates for identification of ARMA models. 
\end{abstract}

\section{Introduction}
Identifying predictive models from data is of critical importance in a wide range of fields, from classical control theory and signal processing to modern machine learning. To this end, a significant line of work in system identification has been devoted to identifying predictor models of the form:
\begin{equation}\label{model}
    Y_t=f_t(X_t,\theta_{\star})+W_t,
\end{equation}
from sequential data $(X_0,Y_0),\ldots,(X_{T-1},Y_{T-1})$.  We typically refer to the variables $X_t$ as the inputs and the variables $Y_t$ as the outputs, with the inputs allowed to have a causal dependence on past outputs. However, we do not restrict attention to input-output models in the sense that \eqref{model} may well be autonomous, cf. \eqref{eq:arma} below. 

Assuming that the regression functions $f_t(\cdot,\cdot)$ are known, a standard approach for estimating the unknown parameter $\theta_{\star}$ is to minimize the quadratic criterion:
\begin{equation*}\label{eq:criterion}
    L_T(\theta):=  \frac{1}{T}\sum_{t=0}^{T-1} (f_t(X_t,\theta)-Y_t)^2
\end{equation*}
over a parameter class $\sfM$ which is assumed to contain $\theta_{\star}$. This approach yields the quadratic prediction error method, also referred to as nonlinear least squares. 

As a motivating example, consider the classical prediction error method for AutoRegressive Moving Average (ARMA) models of the form:
\begin{equation}\label{eq:arma}
    Y_t = \sum_{i=1}^p a_i^{\star} Y_{t-i}+ \sum_{j=0}^q b_j^{\star} W_{t-j}
\end{equation}
from system identification \cite{ljung1998system, Tsybakov2009}. Such models can be cast in the form \eqref{model} with parameter $\theta_\star:=\begin{bmatrix}a_1^{\star},\ldots,a_p^{\star},b_0^{\star},\ldots,b_q^{\star}\end{bmatrix}^{\T}$ and inputs $X_t:=\begin{bmatrix}Y_0,\ldots,Y_{t-1}\end{bmatrix}^{\T}$. To convert \eqref{eq:arma} to the form \eqref{model}, one selects $f_t(\cdot,\cdot)$ to be the conditional expectation of $Y_t$ given  all the past data $Y_0,\ldots,Y_{t-1}$. We return to this example in more detail in \Cref{sec:examples}.

While the asymptotic rates of prediction error methods are by now well understood---including optimal rates of convergence \cite{ljung1998system} as characterized by the Cram\'er-Rao Inequality---relatively less is known about their non-asymptotic counterparts. Some early progress on extending these ideas to the finite-sample regime was made in \cite{campi2002finite}. However, the bounds therein are both qualitatively and quantitatively loose as compared to older asymptotic results.  

A few years ago, drawing upon recent advances in high-dimensional statistics and probability \cite{Vershynin2018, Wainwright2019}, non-asymptotic rates nearly as sharp as the older known asymptotics were derived for the particular case of fully observed ARMA models, given by $Y_t = a_1^{\star} Y_{t-1}+W_t$ \cite{Simchowitz2018, faradonbeh2018finite}.   Soon thereafter, classical subspace methods from system identification, based on higher-order linear autoregressions \cite{jansson1998consistency,chiuso2004asymptotic,qin2006overview}, were also given a refined non-asymptotic analysis \cite{tsiamis2019finite}. Note that in contrast to the general prediction error method, the algorithms in \cite{Simchowitz2018, faradonbeh2018finite,tsiamis2019finite} are based on linear least squares. For a broader overview of recent results on non-asymptotic learning and identification of linear models, refer to \cite{tsiamis2023statistical, ziemann2023tutorial}.

As for learning and identification of nonlinear models of the form \eqref{model}, progress on non-asymptotic analysis has proven somewhat slower. The special case of a generalized linear model (i.e., a first-order linear autoregression composed with a static known nonlinearity) is analyzed in \cite{Sattar2022, Kowshik2021}. At a technical level, the goal has primarily been to sidestep---as much as possible---the blocking technique \cite{Yu1994}, which has otherwise been a dominant approach to deriving non-asymptotic guarantees for learning with dependent data \cite{Mohri2008,Duchi2012,Kuznetsov2017,Roy2021,Sancetta2020}. In brief, the blocking technique splits a dependent sample $\{Z_t\}_{t=0}^{T-1}$ into independent blocks, say $\{Z_t\}_{t=1}^k, \{Z_t\}_{k+1}^{2k},\ldots$, and then proceeds to treat each block as an independent datapoint. The caveat of this technique is that it reduces the effective sample size (e.g., here  by a factor of $k$) and thus typically does not yield optimal rates of convergence. To provide some intuition, $k$ above can be thought of as an analogue to the inverse stability margin of a linear system, and in fact, the blocking technique cannot be applied to marginally stable linear autoregressions. By contrast, an optimal asymptotic characterization of the rate of convergence for such autoregressions has been known since 1943 \cite{mann1943statistical}. Moreover, note that sidestepping this approach is precisely what allowed \cite{Simchowitz2018} to first derive optimal rates for linear system identification. 

More recently, \cite{Ziemann2022} showed how to, at least partially, avoid the blocking approach for the time-invariant version of \eqref{model}---with $f_t(\cdot,\cdot)=f(\cdot,\cdot)$ for a fixed function $f(\cdot,\cdot)$ independent of time. The result of \cite{Ziemann2022} is almost sufficient to provide a rate-optimal non-asymptotic analysis of the ARMA prediction error method. However, it has two shortcomings for this purpose, one of which we have already hinted at. First, the result does not allow for time-varying regression functions $f_t(\cdot,\cdot)$, which is crucial, as the conditional expectation function of $Y_t$ given the past data $Y_0,\ldots,Y_{t-1}$ generally varies in time. Second, the final bounds in \cite{Ziemann2022} are loose by logarithmic factors in problem quantities (including dimensional factors and the time horizon $T$) and hence cannot match known asymptotics \cite{ljung1998system,Ljun1980} even up to constant factors. For the case of time-invariant regression functions, the authors in \cite{ziemann2024sharp} removed these logarithmic factors via a mixed-tail generic chaining argument.





In this paper, we pursue a simpler approach and provide the first  
rate-optimal non-asymptotic prediction error bound for a relatively general class of \textit{time-varying} parametric predictor models. Our model class is rich enough to allow for ARMA models of the form \eqref{eq:arma} that satisfy a certain identifiability condition. Similar to \cite{Ziemann2022}, our approach is based on the martingale offset complexity introduced to the statistical literature by \cite{liang2015learning}. We arrive at our result by providing a refined analysis of this complexity notion for models of the form \eqref{model}. An informal version of our main result is presented next.

\begin{namedtheorem}[Informal Version of Theorem~\ref{theorem:main_result}]
Given data from a sufficiently stable system, for a wide range of identifiable models $f_t(\cdot,\theta_{\star})$, the mean-squared prediction error corresponding to any least-squares estimate $\hattheta\in\argmin_{\theta\in\sfM}L_T(\theta)$ satisfies:
\begin{align*}
    \textup{Mean-Squared Prediction Error}(\hat{\theta})\leq\;\frac{\textup{parameter dimension}\times\textup{noise level}}{\textup{number of samples}}+\textup{higher-order terms}.
\end{align*}
\end{namedtheorem}

The above statistical rate matches known asymptotics \cite{ljung1998system,Ljun1980} up to constant factors and higher-order terms that become negligible for a large enough sample size $T$. The requirement that $T$ is larger than a so-called burn-in time is necessary to establish several components of our result, such as  persistence of excitation. We note that the stability properties of the model, which are measured via the stochastic dependency of the input process $\{X_t\}_{t=0}^{T-1}$, affects only the burn-in time of our result. 

In the next section, we formally present our mathematical assumptions and the problem formulation. In Section~\ref{Non-asymptotic_Analysis_of_Parametric_Time_Series_Least-Squares_Regression}, we introduce our main result, a proof sketch of which is given in Section~\ref{Proof_of_Theorem_1}. In Section~\ref{sec:examples}, we apply our main result to scalar ARMA models. Full proofs of all components of the main theorem's proof can be found in the Appendix.
\smallskip\\
\noindent\textbf{Notation.} The norm $\norm{\cdot}$ is the Euclidean norm whenever it is applied to vectors and the spectral norm whenever it is applied to matrices. Moreover, $\setS^{d-1}$ denotes the unit sphere in $\setR^{d}$, and $\setB_r^{d}$ the Euclidean ball of radius $r$ in $\setR^d$. We use $\mathbb{I}_d$ to denote the identity matrix of size $d$ and $\tr(A)$ to denote the trace of any square matrix $A\in\setR^{d\times d}$. Expectation and probability with respect to all the randomness of the underlying probability space are denoted by $\E$ and $\P$, respectively. Expectation with respect to a random variable $X$ is denoted by $\E_{X}$. Conditional expectation of a random variable $X$ with respect to an event $\calE$ and a
$\sigma$-field $\calF$ is denoted by $\E[X|\calE]$ and $\E[X|\calF]$, respectively. For any event $\calE$, we define $\mathds{1}_{\calE}$ as the indicator function of $\calE$, which takes value $1$ when the event occurs and $0$ otherwise. If $g(\cdot)$, $h(\cdot)$ are functions defined on some unbounded subset of the positive real numbers and $h(x)$ is strictly positive for all large enough values of $x$, we write $g=\calO(h)$ if there exists $x_0\in\setR$ such that $\limsup_{x\to x_0}\abs{g(x)/h(x)}<\infty$. 

\section{Problem Formulation}\label{Problem_Formulation}
Consider the predictor model \eqref{model}, where the input variables $X_t$ take values in $\sfX\subset\setR^{\dx}$, whereas the output
and noise variables, denoted by $Y_t$ and $W_t$, respectively, take values in $\mathbb{R}$. For each $t$, the regression function $f_t:\setR^{\dx}\times\setR^{\dtheta}\to\setR$ is known and depends on the input $X_t$ and the unknown parameter $\theta_{\star}$. The parameter $\theta_{\star}$ is assumed to belong to some known and compact parameter class $\sfM\subseteq\setB_{B_{\theta}}^{\dtheta}$, where $B_{\theta}$ is a positive constant. 

Before formalizing our problem, we introduce a few further assumptions about model \eqref{model} and the parameter class $\sfM$. We start by characterizing the stochastic dependency of the input process $\{X_t\}_{t=0}^{T-1}$, which can be thought of as a measure of the stability of model \eqref{model}. Let us first state the main definition we will need for this characterization. 

\begin{definition}[Dependency Matrix]\label{definition:dependency_matrix}\cite[Section 2]{Samson2000}
Let $\{Z_t\}_{t=0}^{T-1}$ be a stochastic process with joint distribution $\sfP_{\sfZ}$. For each pair $(i,j)$, let $\sfP_{Z_{i:j}}$ denote the joint distribution of $\{Z_t\}_{t=i}^j$ and $\calZ_{ij}:=\sigma(Z_i,\ldots,Z_j)$ the $\sigma$-algebra generated by $\{Z_t\}_{t=i}^{j}$. The dependency matrix of $\{Z_t\}_{t=0}^{T-1}$ is the matrix $\Gammadep(\sfP_{\sfZ}):=\{\Gamma_{ij}\}_{i,j=0}^{T-1}\in\setR^{T\times T}$, where: 
\begin{equation*}
    \Gamma_{ij}=\sqrt{2\sup_{\substack{A\in\calZ_{0:i}\\B\in\calZ_{j:T-1}}}\abs{\sfP_{Z_{j:T-1}}(B|A)-\sfP_{Z_{j:T-1}}(B)}},
\end{equation*}
for $i<j$, $\Gamma_{ii}=1$, and $\Gamma_{ij}=0$, for $i>j$.    
\end{definition}

Let $\sfP_{\sfX}$ denote the joint distribution of the input process $\{X_t\}_{t=0}^{T-1}$. We can measure the dependency of $\{X_t\}_{t=0}^{T-1}$ via the norm $\norm{\Gammadep(\sfP_{\sfX})}$ of its dependency matrix. Notice that $\Gammadep(\sfP_{\sfX})$ always satisfies $1\leq\norm{\Gammadep(\sfP_{\sfX})}\leq cT$, for some $c>0$. The lower bound of $\norm{\Gammadep(\sfP_{\sfX})}$ corresponds to independent input processes, whereas the upper bound corresponds to fully dependent input processes (i.e., processes with $X_t=X_{t+1}$, for all $t=0,\ldots,T-2$). Our results apply to processes for which $\norm{\Gammadep(\sfP_{\sfX})}^2$ grows sublinearly in $T$, as formalized in the following assumption. 

\begin{assumption}\label{assumption:input_dependency}
There exist $b_1>0$ and $b_2\in[0,1)$ such that $\norm{\Gammadep(\sfP_{\sfX})}^2\leq b_1T^{b_2}$.
\end{assumption}

Assumption~\ref{assumption:input_dependency} holds for a large family of input processes $\{X_t\}_{t=0}^{T-1}$ including, e.g., geometrically $\phi$-mixing processes \cite{Samson2000}, processes that satisfy Doeblin's condition \cite{Meyn2012,Samson2000}, and stationary time-homogeneous Markov chains \cite{Ziemann2022} (see \cite{Ziemann2022} for details). In the context of stable linear dynamical systems with bounded noise, it has been shown that the spectral norm of the dependency matrix $\Gammadep(\sfP_{\sfX})$ is uniformly bounded (i.e., $b_2=0$) \cite{Ziemann2022}, which implies an intuitive connection between stability and dependency in the process $\{X_t\}_{t=0}^{T-1}$.

\begin{assumption}\label{assumption:noise}
For each $t$, let $\calF_t:=\sigma(X_0,\ldots,X_{t+1},$ $W_0,\ldots,W_t)$ be the $\sigma$-field generated by the inputs $X_0,\ldots,X_{t+1}$ and the noise variables $W_0,\ldots,W_t$. For every $t$, the noise variable $W_t$ is $\sigma_w^2$-conditionally sub-Gaussian with respect to  $\calF_{t-1}$, that is:
\begin{equation*}
    \E[e^{\lambda W_t}|\calF_{t-1}]\ \leq e^{\frac{\lambda^2\sigma_w^2}{2}},
\end{equation*}
for all $\lambda\in\setR$, for some $\sigma_w>0$.
\end{assumption}

Assumption~\ref{assumption:noise} is satisfied if the noise variables $W_t$ are i.i.d. zero-mean Gaussian with variance $\sigma_w^2$ and independent of the inputs $X_0,\ldots,X_{t}$. In addition, it is satisfied by a large number of non-Gaussian random variables $W_t$ \cite{Wainwright2019}; it is also standard in prior work \cite{Ziemann2022a,Ziemann2022,Ziemann2023}.

\begin{assumption}\label{assumption:regression_function}
For each $t$, the regression function $f_t(\cdot,\cdot)$ is twice differentiable with respect to its second argument. Moreover, there exist $L_1,L_2>0$ such that the partial gradients $\gradtheta f_t(\cdot,\cdot)$ and the partial Hessians $\gradtheta^2 f_t(\cdot,\cdot)$ satisfy $\norm{\gradtheta f_t(x,\theta)}\leq L_1$ and $\norm{\gradtheta^2 f_t(x,\theta)}\leq L_2$, respectively, for all $(x,\theta)\in\sfX\times\sfM$. In addition, the partial Hessians $\gradtheta^2 f_t(\cdot,\cdot)$ are $L_3$-Lipschitz continuous in their second argument with respect to the norm $\norm{\cdot}$.
\end{assumption}

Note that for all functions $f_t(\cdot,\cdot)$ that are three times differentiable with respect to their second argument, Assumption~\ref{assumption:regression_function} trivially holds if $\sfX$ is bounded given that $\sfM\subseteq\setB_{B_{\theta}}^{\dtheta}$ is bounded. One expects that our results extend to unbounded inputs via a truncation argument, see for instance \cite[Section 5.1]{Ziemann2022}. We leave a thorough analysis of this case for future work. 

\begin{assumption}[Positive Definite Information Matrix]\label{assumption:excitation}
There exists $\lambda_0>0$ such that:
\begin{equation*}\label{excitation_condition}
    \E\bracks{\frac{1}{T}\sumt \gradtheta f_t(X_t,\theta_{\star}) \gradtheta^{\T}  f_t(X_t,\theta_{\star})}\succeq\lambda_0\mathbb{I}_{d_{\theta}}.
\end{equation*}
\end{assumption}

Assumption~\ref{assumption:excitation} imposes a minimal noise excitation condition, quantifying the notion of persistence of excitation \cite{ljung1998system}. Put differently, it asks that the parameter $\theta_\star$ is identifiable (in the second-order sense). We note in passing that analogous conditions are employed in recent related work (see, e.g., \cite{Mania2022,Kowshik2021,Ziemann2022}). 

\begin{assumption}[Quadratic Identifiability]\label{assumption:expansivity}
There exists $a>0$ such that for every $\theta\in\sfM$:
\begin{equation}\label{expansivity_condition}
    \norm{\theta-\theta_{\star}}^2\leq a\E\bracks{\frac{1}{T}\sumt(f_t(X_t,\theta)-f_t(X_t,\theta_{\star}))^2}.
\end{equation}
\end{assumption}

Assumption~\ref{assumption:expansivity} imposes a regularity condition on the regression functions $f_t(\cdot,\cdot)$ with respect to the parameter space. More specifically, it quantifies the growth of the prediction error as quadratic in the parameter error. We point out that condition \eqref{expansivity_condition} is weaker than the global positive-definiteness condition:
\begin{equation*}
    \E\bracks{\frac{1}{T}\sumt \gradtheta f_t(X_t,\theta) \gradtheta^{\T} f_t(X_t,\theta)}\succeq\delta\mathbb{I}_{d_{\theta}},
\end{equation*}
which is often assumed in the asymptotic literature \cite{Ljun1980}, for all $\theta\in\sfM$, for some $\delta>0$. Moreover, note that Assumption~\ref{assumption:expansivity} always holds for linear dynamical systems as well as generalized linear models satisfying a certain expansivity condition (see, e.g., \cite{Sattar2022,Foster2020,Kowshik2021}).

The goal of system identification can often be cast as to identify the parameter $\theta_{\star}$, specifying the data-generating distribution in \eqref{model},
from sequential input-output data $(X_0,Y_0),\ldots,(X_{T-1},Y_{T-1})$ \cite{Box2015}. In this paper, we analyze the finite-sample performance of the regression functions $f_t(\cdot,\hattheta)$, where $\widehat \theta$ satisfies:
\begin{equation}\label{theta_hat}
    \hat{\theta}\in\argmin_{\;\;\quad \theta\in\sfM}\frac{1}{T}\sum_{t=0}^{T-1}(f_t(X_t,\theta)-Y_t)^2.
\end{equation}
In particular, we are interested in providing an upper bound for the (mean-squared) prediction error of the models $f_t(\cdot,\hattheta)$, given by:
\begin{equation}\label{excess}
   \E\bracks{\frac{1}{T}\sumt(f_t(\barX_t,\hattheta)-f_t(\barX_t,\theta_{\star}))^2}.
\end{equation}
Herein, we use $\{\barX_t\}_{t=0}^{T-1}$ to denote a fresh sample drawn from $\sfP_{\sfX}$ independently of $\{X_t\}_{t=0}^{T-1}$. We formalize the problem in the following statement.

\begin{problem}[Rate-Optimal Non-asymptotic Analysis of the Quadratic Prediction Error Method]\label{problem}
Assume that $\theta_{\star}$ in predictor model \eqref{model} is unknown. Consider a finite number $T\in\setN_+$ of sequential input-output data $(X_0,Y_0),\ldots,(X_{T-1},Y_{T-1})$ generated by model \eqref{model} and let $\hat{\theta}$ satisfy \eqref{theta_hat}. Provide bounds $T_0$ and $\e(T)$ such that if $T\geq T_0$, then:
\begin{equation*}
    \E\bracks{\frac{1}{T}\sumt(f_t(\barX_t,\hattheta)-f_t(\barX_t,\theta_{\star}))^2}\leq\e(T).
\end{equation*}
The bounds $T_0$ and $\e(T)$ may also depend on the parameters $\dtheta$, $\sigma_w$, $B_{\theta}$, $L_1$, $L_2$, $L_3$, $\lambda_0$, $a$, $b_1$, and $b_2$. Moreover, the prediction error bound $\e(T)$ should match known asymptotics up to constant factors in its leading term (see Remark~\ref{remark:known_asymptotics} for details).
\end{problem}

\begin{remark}\label{remark:known_asymptotics}
We refer to non-asymptotic rates for the prediction error \eqref{excess} as optimal if they match known asymptotics up to constant factors and higher-order terms. In particular, existing results for the quadratic prediction error method from the asymptotic literature \cite{ljung1998system,Ljun1980} guarantee that $\sqrt{T} (\hattheta - \theta_\star) $ converges in distribution to $\mathcal{N}(0,\mathcal{I}^{-1}(\theta_\star))$, where:
\begin{equation*}
\mathcal{I}(\theta_\star) :=  \frac{1}{\sigma_w^2}\E\bracks{\frac{1}{T}\sumt \nabla_{\theta}f_t(X_t,\theta_{\star})  \nabla_{\theta}^{\T} f_t(X_t,\theta_{\star}) }  
\end{equation*}
is the Fisher information matrix. An informal calculation---ignoring the higher-order terms in Taylor's theorem---suggests that the prediction error can be written as follows:
\begin{align}\label{eq:informalcramerrao}
     &\E\bracks{\frac{1}{T}\sumt(f_t(\barX_t,\hattheta)-f_t(\barX_t,\theta_{\star}))^2}\nonumber\\
     &\approx \E\bracks{(\hattheta - \theta_\star)^{\T} (\sigma_w^2 \mathcal{I}(\theta_\star))  (\hattheta - \theta_\star)}\nonumber\\
     &= \E\tr \left(  \sigma_w^2 \mathcal{I}(\theta_\star)  (\hattheta - \theta_\star) (\hattheta - \theta_\star)^{\T} \right).
\end{align}
Under suitable regularity conditions, we can deduce that the expectation of the trace on the right-hand side of \eqref{eq:informalcramerrao} asymptotically converges to $\frac{\sigma^2_w d_\theta }{T}$, that is:
\begin{align*}
    T^{-1} \E \tr \left(  \sigma_w^2 \mathcal{I}(\theta_\star)  (\hattheta - \theta_\star)  (\hattheta - \theta_\star)^{\T}  \right)
    \to \tr \left(  \sigma_w^2 \mathcal{I}(\theta_\star)  \mathcal{I}^{-1}(\theta_\star)  \right) = \sigma^2_w d_\theta. 
\end{align*}
In light of the above result, our goal is to obtain a rate of convergence that decays as fast as $\frac{ c \sigma^2_w d_\theta }{T}$, for some universal constant $c>0$.
\end{remark}

\section{Optimal Non-asymptotic Rates for the Quadratic Prediction Error Method}\label{Non-asymptotic_Analysis_of_Parametric_Time_Series_Least-Squares_Regression}
In this section, we present our main result, which is a rate-optimal bound for the prediction error \eqref{excess} of the models $f_t(\cdot,\hattheta)$, where $\hattheta$ is an estimate of the true parameter $\theta_{\star}$, satisfying \eqref{theta_hat}. Before we state our main theorem, let us note that herein, $\poly_{\psi}$ denotes a polynomial of degree of order $\psi$ in its arguments.  
  
\begin{theorem}[Optimal Non-asymptotic Rates for the Quadratic Prediction Error Method]\label{theorem:main_result}
Consider the predictor model \eqref{model} and the parameter class $\sfM$ under Assumptions~\cref{assumption:input_dependency,assumption:noise,assumption:regression_function,assumption:excitation,assumption:expansivity}. Fix any $\gamma\in(0,1/2)$ and let $\hattheta$ satisfy \eqref{theta_hat}. Then, there exist:
\begin{subequations}
\begin{align}
    \label{burn_in_time_1}
    T_1&:=\poly_{\frac{1}{1-b_2}}(d_{\theta},L_1,a,b_1,1/(1-b_2)), \\
    \label{burn_in_time_2}
    T_2&:=\poly_{\frac{1}{1-b_2}}(d_{\theta},\sigma_w,\Btheta,L_1,1/\lambda_0,b_1,1/(1-b_2)), \\
    \label{burn_in_time_3}
    T_3&:=\poly_{\frac{1}{1-2\gamma}}(d_{\theta},\sigma_w,\Btheta,L_1,L_2,L_3,a,1/(1-2\gamma)),
\end{align}
\end{subequations}
and a universal constant $c>0$ such that if $T\geq\max\{T_1,T_2,T_3\}$, we have:
\begin{equation}\label{prediction_error_bound}
     \E\bracks{\frac{1}{T}\sumt(f_t(\barX_t,\hattheta)-f_t(\barX_t,\theta_{\star}))^2}\leq \frac{c d_\theta\sigma_w^2}{T}+\frac{B}{T^{1+\gamma}},
\end{equation}
where $B=2L_1^2\Btheta^2+16$.
\end{theorem}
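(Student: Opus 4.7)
My plan is to follow the martingale offset complexity framework of \cite{liang2015learning,Ziemann2022}, refined by linearizing the regression functions about $\theta_\star$ so that the exact self-normalized bound for linear classes can be applied, producing the leading Cram\'er--Rao term $c\sigma_w^2\dtheta/T$ with no extraneous logarithmic factors. Writing $\Delta_t(\theta):= f_t(X_t,\theta)-f_t(X_t,\theta_\star)$ and using $Y_t = f_t(X_t,\theta_\star) + W_t$, optimality of $\hattheta$ yields the basic inequality $\tfrac{1}{T}\sumt \Delta_t(\hattheta)^2 \leq \tfrac{2}{T}\sumt W_t \Delta_t(\hattheta)$, which I would weaken, for a small $\eta\in(0,1)$, to the uniform martingale offset bound
\[
    (1-\eta)\,\frac{1}{T}\sumt \Delta_t(\hattheta)^2 \;\leq\; \sup_{\theta\in\sfM}\pars{\frac{2}{T}\sumt W_t \Delta_t(\theta) \;-\; \frac{\eta}{T}\sumt \Delta_t(\theta)^2}.
\]

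The central step is bounding this supremum at the correct rate. Using Assumption~\ref{assumption:regression_function}, I would split $\Delta_t(\theta) = \gradtheta f_t(X_t,\theta_\star)^{\T}(\theta-\theta_\star) + R_t(\theta)$ with $|R_t(\theta)| \leq (L_2/2)\norm{\theta-\theta_\star}^2$, and reduce the supremum to (i) a purely linear-in-$(\theta-\theta_\star)$ offset plus (ii) a higher-order Taylor remainder. For (i), the sup is attained in closed form by the self-normalized identity
\[
    \sup_{u\in\R^{\dtheta}}\pars{\frac{2}{T}\sumt W_t\gradtheta f_t(X_t,\theta_\star)^{\T}u - \frac{\eta}{T}u^{\T}\hatSigmaT u} = \frac{1}{\eta T^2}\, g_T^{\T}\hatSigmaT^{-1} g_T,
\]
with $g_T := \sumt W_t\gradtheta f_t(X_t,\theta_\star)$ and $\hatSigmaT := \sumt \gradtheta f_t(X_t,\theta_\star)\gradtheta^{\T} f_t(X_t,\theta_\star)$. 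Conditional sub-Gaussianity of $W_t$ from Assumption~\ref{assumption:noise} gives $\E[g_T^{\T}\hatSigmaT^{-1} g_T] \lesssim \sigma_w^2 \dtheta$ once $\hatSigmaT/T$ is bounded below in operator norm by a constant times $\lambda_0\mathbb{I}_{\dtheta}$; the latter is obtained by combining Assumption~\ref{assumption:excitation} with a matrix-concentration argument for $\hatSigmaT$ under the dependency bound of Assumption~\ref{assumption:input_dependency}, producing the burn-ins $T_1$ and $T_2$. For (ii), I would first establish the crude rate $\E\norm{\hattheta-\theta_\star}^2 = \calO(1/T)$ (by combining step (i) with Assumptions~\ref{assumption:excitation} and \ref{assumption:expansivity}), then use the Lipschitz Hessian of Assumption~\ref{assumption:regression_function} to control the Taylor remainder at the $T^{-(1+\gamma)}$ rate encoded in $T_3$. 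Finally, to pass from the in-sample error to the fresh-sample prediction error \eqref{excess}, I would use that $\{\barX_t\}$ and $\{X_t\}$ share the joint distribution $\sfP_{\sfX}$, so that Assumption~\ref{assumption:expansivity} transfers across samples up to the same $o(1/T)$ concentration correction.

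The main technical obstacle is producing the sharp constant on the leading term: any covering-net or generic-chaining reduction over $\sfM$ would inflate the bound by $\log T$ or $\log\dtheta$ factors and thereby fail to match the known asymptotics of \cite{ljung1998system,Ljun1980}. This is precisely what forces the combination of the self-normalized identity, which captures the Fisher-information scaling exactly, with the second-order Taylor expansion, which is what makes that linear identity applicable in the genuinely nonlinear parametric setting. A secondary obstacle is that the analogous bounds in \cite{Ziemann2022} are derived for time-invariant regression functions and lose polylogarithmic factors under dependence; handling time-varying $f_t$ here requires revisiting the concentration of $\hatSigmaT$ around $\E[\hatSigmaT]$ using the spectral-norm control of the dependency matrix from Assumption~\ref{assumption:input_dependency}, which is what keeps the burn-ins \eqref{burn_in_time_1}--\eqref{burn_in_time_3} polynomial in $1/(1-b_2)$ and $1/(1-2\gamma)$.
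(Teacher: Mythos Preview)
Your high-level architecture matches the paper's: basic inequality and martingale offset, Taylor linearization of $\Delta_t(\theta)$ about $\theta_\star$, a self-normalized martingale bound for the linear part yielding the sharp $c\dtheta\sigma_w^2/T$ leading term, and relegation of the second-order Taylor remainder to the $T^{-(1+\gamma)}$ term. There is, however, a genuine gap in how you close the loop on the remainder. You propose to ``first establish the crude rate $\E\norm{\hattheta-\theta_\star}^2=\calO(1/T)$ by combining step (i) with Assumptions~\ref{assumption:excitation} and~\ref{assumption:expansivity}''---but step (i) bounds only the linearized offset $\thickbar{M}_T(\hattheta)$, whereas any bound on $\norm{\hattheta-\theta_\star}$ must come from the full offset $M_T(\hattheta)$, which already contains the remainder you are trying to control. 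From Lemma~\ref{lemma:taylor_bound}, $M_T(\hattheta)\leq\thickbar{M}_T(\hattheta)+\norm{\tfrac{2}{T}\sumt W_tV_t}\norm{\hattheta-\theta_\star}^2+\tfrac{L_2^2}{4}\norm{\hattheta-\theta_\star}^4$, and the quartic term cannot be absorbed into the left side unless $\norm{\hattheta-\theta_\star}$ is already known to be small (the a priori bound $\norm{\hattheta-\theta_\star}\leq 2\Btheta$ does not suffice unless $aL_2^2\Btheta^2$ happens to be small). The paper breaks the circularity by first proving the \emph{suboptimal} high-probability rate $\sup_{\theta\in\sfM}M_T(\theta)=\calO(\log T/T)$ via a covering/chaining argument on the full nonlinear class---exactly the discretization you say you want to avoid---and uses this, through \eqref{inequality:theorem2_old}, to get $\norm{\hattheta-\theta_\star}^2=\calO(\log T/T)$. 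Fed back into the Taylor remainder, the higher-order terms become $\calO((\log T/T)^{3/2})$, which is $o(T^{-(1+\gamma)})$ for $\gamma<1/2$; the point is that the logarithmic factors from covering appear only in the higher-order terms and never touch the leading one.

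A secondary issue is the transfer from in-sample empirical error to the fresh-sample prediction error \eqref{excess}. Invoking ``Assumption~\ref{assumption:expansivity} transferring across samples'' goes the wrong way: that assumption upper-bounds parameter error by population prediction error, not the reverse, and routing through $L_1^2\norm{\hattheta-\theta_\star}^2$ would contaminate the leading constant with $aL_1^2$. The paper instead proves a uniform lower isometry (Lemma~\ref{lemma:analogue-of-Th.5.2}, a time-varying extension of \cite[Theorem~5.2]{Ziemann2022} via Samson-type concentration under Assumption~\ref{assumption:input_dependency}): with high probability, $\tfrac{1}{T}\sumt g_t^2(X_t,\theta)\geq\tfrac{1}{8}\cdot\tfrac{1}{T}\sumt\E g_t^2(X_t,\theta)$ uniformly for all $\theta$ outside a ball of radius $r=\calO(T^{-(1+\gamma)/2})$. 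Combined with the basic inequality $\tfrac{1}{T}\sumt g_t^2(X_t,\hattheta)\leq M_T(\hattheta)$, this directly yields Theorem~\ref{theorem:theorem2}, bounding the population error by $8M_T(\hattheta)+\calO(T^{-(1+\gamma)})$ without ever passing through the parameter error.
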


The exact expressions of the polynomials $T_1$, $T_2$, and $T_3$ of Theorem~\ref{theorem:main_result} are given in the Appendix. 

\begin{remark}[Result interpretation]\label{remark:main_result_interpretation}
Observe in \eqref{prediction_error_bound} that for sufficiently large sample size $T$, the least-squares prediction error decays at a rate of $\calO(T^{-1})$. In particular, the leading term in \eqref{prediction_error_bound} is determined by the signal-to-noise ratio (SNR) of model \eqref{model}, which is defined as $\mathsf{SNR}=\sigma_w^2/T$. Notice that the longer the predictor model is excited by noise and the smaller the sub-Gaussian parameter $\sigma_w$ is, the smaller the prediction error bound becomes. We note that this rate is optimal in the sense that it matches known asymptotics up to constant factors in its leading term  (see Remark~\ref{remark:known_asymptotics}), after a finite burn-in time $T_0:=\max\{T_1,T_2,T_3\}$. The burn-in time grows polynomially in: i) the parameter dimension $d_{\theta}$, ii) the sub-Gaussian parameter $\sigma_w$, iii) the noise bound $\Btheta$, iv) the dependency parameter $b_1$, v) the smoothness parameters $L_1$, $L_2$, $L_3$, $a$, and vi) the inverse of the noise excitation constant $\lambda_0$. Notice also the exponential growth of $T_0$ in the dependency parameter $b_2$. The parameter $b_2$ is typically zero for exponentially stable dynamical systems (consider, e.g., exponentially stable ARMA models, cf. \cite{Ziemann2022} for the case of autoregressive models). Nonetheless, improving this growth rate is an interesting future research direction.
\end{remark}

In the following section, we sketch the proof steps of Theorem~\ref{theorem:main_result}.

\section{Proof Sketch of Theorem~\ref{theorem:main_result}}\label{Proof_of_Theorem_1}
In this section, we present the main proof steps of Theorem~\ref{theorem:main_result}, which provides us with optimal non-asymptotic rates for the quadratic prediction error method.


A key quantity appearing in our analysis is the martingale offset corresponding to a parameter $\theta\in\sfM$, which can be thought of as a measure of the complexity of the corresponding regression functions $f_t(\cdot,\theta)$. To formally define the martingale offset, let us first introduce relevant notation. Let $\{W_t\}_{t=0}^{T-1}$ denote the noise sequence corresponding to the input-output data $(X_0,Y_0),\ldots,(X_{T-1},Y_{T-1})$, i.e., let $W_t=Y_t-f_t(X_t,\theta_{\star})$, for all $t=0,\ldots,T-1$. Moreover, consider the shifted process $\{g_t(X_t,\theta)\}_{t=0}^{T-1}$, where $g_t(X_t,\theta)=f_t(X_t,\theta)-f_t(X_t,\theta_{\star})$, for all $t=0,\ldots,T-1$. For any $\theta\in\sfM$, the \textit{martingale offset} corresponding to the parameter $\theta$ is defined as:
\begin{equation*}\label{martingale_offset}
    M_T(\theta) = \frac{1}{T}\sumt \left(4W_tg_t(X_t,\theta)-g_t^2(X_t,\theta)\right).
\end{equation*}
The above definition is motivated by the  \textit{martingale offset complexity} $\sup_{\theta\in\sfM}M_T(\theta)$, which is employed in previous works \cite{Liang2015,Ziemann2022a,Ziemann2022}. As we will see in the analysis that follows, deriving a bound on the expected martingale offset $\E M_T(\hattheta)$ of any least-squares estimate $\hattheta$, instead of the expected martingale offset complexity $\E[\sup_{\theta\in\sfM}M_T(\theta)]$, is essential for obtaining optimal finite-sample rates for the quadratic prediction error method.

In the theorem below, we present a bound for the prediction error of the models $f_0(\cdot,\hattheta),\ldots,$ $f_{T-1}(\cdot,\hattheta)$, conditioned on the given sample $\{(X_t,Y_t)\}_{t=0}^{T-1}$. Note that the following theorem is a modified version of \cite[Corollary 4.2]{Ziemann2022} for time-varying predictor models. We achieve the extension to the time-varying case by deriving concentration inequalities for the sum of time-varying functions of the input data, leveraging a result from \cite{Samson2000} (see Appendix~\ref{Proof_of_Theorem_2} for details).
\begin{theorem}\label{theorem:theorem2}
Consider the predictor model \eqref{model} and the parameter class $\sfM$ under Assumptions~\cref{assumption:input_dependency,assumption:regression_function,assumption:expansivity}. Fix any $\gamma\in[0,1)$ and let $\hattheta$ satisfy \eqref{theta_hat}. Then, there exists $T_1$, defined as in \eqref{burn_in_time_1}, such that if $T\geq T_1$, we have:
\begin{align}\label{inequality:theorem2}
    &\E_{\barX_{0:T-1}}\bracks{\frac{1}{T}\sumt(f_t(\barX_t,\hattheta)-f_t(\barX_t,\theta_{\star}))^2}\leq8 M_T(\hat{\theta})+\frac{2L_1^2B_{\theta}^2}{T^{1+\gamma}},
\end{align}
where $\barX_{0:T-1}=(\barX_0,\ldots,\barX_{T-1})$.
\end{theorem}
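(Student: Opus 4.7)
The plan is to adapt the offset-complexity paradigm of \cite{Liang2015,Ziemann2022} to the time-varying setting by combining it with the concentration inequality of \cite{Samson2000} for functions of weakly dependent data. Introduce the shorthand $\hat{E}(\theta) := \frac{1}{T}\sumt g_t^2(X_t,\theta)$ for the in-sample squared error and $E(\theta) := \E_{\barX_{0:T-1}}\bracks{\frac{1}{T}\sumt g_t^2(\barX_t,\theta)}$ for its fresh-sample counterpart. The starting point is the \emph{basic inequality}: expanding $L_T(\hattheta) \leq L_T(\theta_{\star})$ and substituting $Y_t = f_t(X_t,\theta_{\star}) + W_t$ yields $\hat{E}(\hattheta) \leq \frac{2}{T}\sumt W_t g_t(X_t,\hattheta)$; doubling both sides and subtracting one copy of $\hat{E}(\hattheta)$ gives the pointwise bound $\hat{E}(\hattheta) \leq M_T(\hattheta)$. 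The task therefore reduces to controlling $E(\hattheta)$ by a multiple of $\hat{E}(\hattheta)$ up to the stated $T^{-(1+\gamma)}$ remainder.

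For this sample-to-population transfer, I would establish a uniform one-sided deviation of the form $E(\theta) - \hat{E}(\theta) \leq \tfrac12 E(\theta) + c\, T^{-(1+\gamma)}$ over all $\theta \in \sfM$, holding on an event of probability $1 - \calO(T^{-(1+\gamma)})$. The proof has three ingredients. First, Samson's bounded-differences inequality is applied to $\hat{E}(\theta) - E(\theta)$, with the effective variance scaled by $\normbig{\Gammadep(\sfP_\sfX)}^2 \leq b_1 T^{b_2}$ as permitted by Assumption~\ref{assumption:input_dependency}. Second, a variance self-bounding step uses the pointwise bound $|g_t(x,\theta)| \leq 2 L_1 B_\theta$ (implied by $g_t(x,\theta_{\star}) \equiv 0$ and the gradient bound in Assumption~\ref{assumption:regression_function}) to obtain $\mathrm{Var}[g_t^2(X_t,\theta)] \lesssim L_1^2 B_\theta^2\, E(\theta)$. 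Third, Assumption~\ref{assumption:expansivity} feeds into a peeling argument over dyadic shells $\{\theta \in \sfM : E(\theta) \approx 2^{-k}\}$, converting the Bernstein-type rate from an additive $\sqrt{E(\theta)}$ penalty into a multiplicative $\tfrac12 E(\theta)$ one, provided the effective sample size $T_{\mathrm{eff}} := T/\normbig{\Gammadep(\sfP_\sfX)}^2 \geq T^{1-b_2}/b_1$ is large enough. An $\epsilon$-net of $\sfM$ at radius $\epsilon \asymp T^{-(1+\gamma)}$ has cardinality $(c B_\theta/\epsilon)^{d_\theta}$, and the Lipschitz continuity of $\theta \mapsto g_t^2$ (from Assumption~\ref{assumption:regression_function}) makes the discretization error at most $\calO(T^{-(1+\gamma)})$. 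The burn-in $T_1$ emerges from requiring $T^{1-b_2} \gg \poly(d_\theta, L_1, a, b_1)$ so that the Samson tail is dominated by $\tfrac12 E(\theta)$ across all shells.

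Combining the uniform deviation with the basic inequality yields $E(\hattheta) \leq 2 M_T(\hattheta) + 2c\, T^{-(1+\gamma)}$ on the good event; the complementary event, whose probability $\calO(T^{-(1+\gamma)})$ multiplies the worst-case bound $g_t^2 \leq 4 L_1^2 B_\theta^2$, contributes the remaining $T^{-(1+\gamma)}$ term. Careful bookkeeping of the numerical constants then produces the stated factors $8$ and $2L_1^2 B_\theta^2$ in \eqref{inequality:theorem2}. The main obstacle I anticipate is the extension of Samson's inequality to \emph{time-varying} observables: his result is stated for a single functional $\phi(Z_0,\ldots,Z_{T-1})$, whereas here the observable $\theta \mapsto \sumt g_t^2(X_t,\theta)$ has per-coordinate bounded-difference constants that themselves depend on $t$. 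Verifying that these aggregate to an overall variance proxy scaled by $\normbig{\Gammadep(\sfP_\sfX)}^2$ (rather than $T \cdot \normbig{\Gammadep(\sfP_\sfX)}^2$ naively) is what preserves the crucial $T^{1-b_2}/b_1$ effective sample size. Once this adaptation is in hand, the covering and peeling steps are routine but numerically delicate, requiring careful tracking of dimensional factors to recover the exact constants in the target bound.
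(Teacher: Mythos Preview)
Your high-level skeleton is right (basic inequality $\hat E(\hattheta)\le M_T(\hattheta)$, then a one-sided uniform deviation to pass from $\hat E$ to $E$, then absorb the failure event), and you correctly identify that the main technical issue is adapting Samson's inequality to time-varying summands. But the way you propose to use Assumption~\ref{assumption:expansivity} is not the way it is actually needed, and this creates a quantitative gap.

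The problem is your variance step. Bounding $\mathrm{Var}[g_t^2(X_t,\theta)]\lesssim L_1^2 B_\theta^2\, E(\theta)$ via the uniform bound $|g_t|\le 2L_1B_\theta$ gives, through Samson, a concentration probability of the form $\exp\bigl(-c\,T\,E(\theta)/(\normGamma^2 L_1^2 B_\theta^2)\bigr)$, which \emph{degrades as the shell $E(\theta)\approx r^2$ shrinks}. After the union bound over a net of log-size $\asymp d_\theta\log(1/r)$ and the requirement that the failure probability be $\calO(T^{-(1+\gamma)})$, this forces $r^2\gtrsim d_\theta\log T/T^{1-b_2}$. You therefore cannot push the additive remainder below $d_\theta\log T/T^{1-b_2}$, let alone to $T^{-(1+\gamma)}$ for any $\gamma\ge 0$ when $b_2>0$. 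Peeling does not rescue this: the bottleneck is the smallest shell, and Assumption~\ref{assumption:expansivity} used merely to localize the net of each shell does not change the scale-dependence of the exponent.

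The paper's proof uses Assumption~\ref{assumption:expansivity} differently and crucially: combined with the gradient bound it yields the $L^4$--$L^2$ estimate
\[
\frac{1}{T}\sumt \E g_t^4(X_t,\theta)\;\le\; L_1^4\|\theta-\theta_\star\|^4\;\le\; L_1^4 a^2\,E(\theta)^2,
\]
which, plugged into Samson's inequality, makes the concentration \emph{scale-free}: $\P\bigl(\hat E(\theta)\le \tfrac12 E(\theta)\bigr)\le \exp\bigl(-T^{1-b_2}/(8b_1 L_1^4 a^2)\bigr)$ for every $\theta$. With this, a single star-hull/boundary covering at radius $r$ (rather than dyadic peeling) suffices, and the only cost of taking $r^2=L_1^2B_\theta^2/T^{1+\gamma}$ is a $\log(1/r)\asymp\log T$ factor in the net size, easily absorbed by the scale-free exponential once $T\ge T_1$. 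If you replace your variance step by this hypercontractivity use of Assumption~\ref{assumption:expansivity}, the rest of your outline goes through (and the loss from net to full set via the parallelogram inequality is what turns your optimistic factor $2$ into the stated $8$).
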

Given \eqref{theta_hat}, by taking the expectation over the sample $\{(X_t,Y_t)\}_{t=0}^{T-1}$, \eqref{inequality:theorem2} yields:
\begin{align}\label{theorem:theorem1-11}
    &\E\bracks{\frac{1}{T}\sumt(f_t(\barX_t,\hattheta)-f_t(\barX_t,\theta_{\star}))^2}\leq8 \E M_T(\hat{\theta})+\frac{2L_1^2B_{\theta}^2}{T^{1+\gamma}},
\end{align}
for any $\gamma\in[0,1)$. Moreover, Assumption~\ref{assumption:expansivity} can be invoked to obtain the parameter error bound:
\begin{equation}\label{inequality:theorem2_old}
    \norm{\hat{\theta}-\theta_{\star}}^2\leq 8a M_T(\hat{\theta})+\frac{2aL_1^2B_{\theta}^2}{T},
\end{equation}
since $T\leq T^{1+\gamma}$, for all $\gamma\in[0,1)$.

Employing the result from \cite[Lemma 10]{Ziemann2022a} to bound the expected martingale offset complexity $\E [\sup_{\theta\in\sfM}M_T(\theta)]$, 
inequality \eqref{theorem:theorem1-11} directly provides us with a prediction error bound of order $\calO(\log T/T)$, after a finite burn-in time $T_1$. We note that this is the first non-asymptotic result for predictor models of the form \eqref{model}, where the regression functions $f_t(\cdot,\cdot)$ are time-varying.

In the rest of this section, our goal is to improve upon that rate and ensure an optimal convergence rate of $\calO(T^{-1})$, after a longer but finite burn-in time. Recall that herein optimality of rate implies matching existing results from the asymptotic literature, modulo constant factors and higher-order terms (see Remark~\ref{remark:known_asymptotics}). Our refinement of the prediction error bound resulting from Theorem~\ref{theorem:theorem2} consists of three distinct steps:
\begin{enumerate}[label=\roman*)]
\item First, we derive an upper bound for $\E M_T(\hattheta)$ by using the Taylor expansion of the models $f_t(X_t,\hattheta)$ around $\theta_{\star}$. This bound depends on a ``linearized'' term and higher-order terms related to the parameter error $\norm{\hattheta-\theta_{\star}}$. 
\item Second, we provide a rate-optimal bound which scales like $d_\theta \sigma_w^2 T^{-1}$ for the ``linearized'' term, by leveraging ideas from linear system identification.
\item Third, we combine our bound for the ``linearized'' term with faster decaying bounds for  the higher-order terms to obtain a refined bound for $\E M_T(\hattheta)$. The main idea is to bound the higher-order terms employing the nearly optimal bounds resulting from Theorem~\ref{theorem:theorem2}. Owing to the higher order of these components, a careful analysis does not degrade the leading $d_\theta \sigma_w^2 T^{-1}$-order term of the linearized component.
\end{enumerate}

Putting everything together, we provide the first optimal non-asymptotic rates for the quadratic prediction error method. For clarity of presentation, we separately analyze the aforementioned proof steps.
\smallskip\\
\noindent\textbf{Step I: Bounding $\E M_T(\hattheta)$ via Taylor expansion.} 
By Taylor's theorem with remainder, for each $t$, we have:
\begin{align}\label{lemma:taylor_bound-2}
f_t(X_t,\hat{\theta}) &= f_t(X_t,\theta_{\star})+Z_t^{\T}(\hat{\theta}-\theta_{\star})+\frac{1}{2}(\hat{\theta}-\theta_{\star})^{\intercal}V_t(\hat{\theta}-\theta_{\star}),    
\end{align}
where $Z_t=\nabla_{\theta}f_t(X_t,\theta_{\star})$ and $V_t=\nabla_{\theta}^2f_t(X_t,\tilde{\theta}_t)$, with $\tilde{\theta}_t=\alpha_t\hat{\theta}+(1-\alpha_t)\theta_{\star}$, for some $\alpha_t\in[0,1]$. Exploiting the Taylor expansion of each $f_t(X_t,\hattheta)$ from \eqref{lemma:taylor_bound-2}, we can prove the following lemma.
\begin{lemma}\label{lemma:taylor_bound}
Consider the predictor model \eqref{model} and the parameter class $\sfM$ under Assumption~\cref{assumption:regression_function}. Moreover, let $\hattheta$ satisfy \eqref{theta_hat}, and for each $t$, consider the Taylor expansion of $f_t(X_t,\hattheta)$ around $\theta_{\star}$ given in \eqref{lemma:taylor_bound-2}. Then, the martingale offset of $\hattheta$ satisfies:
\begin{align}\label{inequality:lemma_taylor_bound}
    M_T(\hattheta)\leq &\;\thickbar{M}_T(\hattheta)+\normbig{\frac{2}{T}\sumt W_tV_t}\norm{\hat{\theta}-\theta_{\star}}^2+\frac{L_2^2}{4}\norm{\hat{\theta}-\theta_{\star}}^4,
    \end{align}
where:
\begin{equation}\label{linearized_term}
    \thickbar{M}_T(\hattheta)= \frac{1}{T}\sum_{t=0}^{T-1}\Big[4W_tZ_t^{\T}(\hat{\theta}-\theta_{\star})-\frac{1}{2}(Z_t^{\T}(\hat{\theta}-\theta_{\star}))^2\Big].
\end{equation}
\end{lemma}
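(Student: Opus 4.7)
The plan is to substitute the Taylor expansion \eqref{lemma:taylor_bound-2} directly into the definition of $M_T(\hattheta)$ and then carefully bound the resulting cross-terms. Writing $\Delta := \hat{\theta}-\theta_\star$ and $R_t := \frac{1}{2}\Delta^{\T} V_t \Delta$, the Taylor expansion gives $g_t(X_t,\hat{\theta}) = Z_t^{\T}\Delta + R_t$. Squaring and expanding yields
\begin{equation*}
4W_t g_t(X_t,\hat{\theta}) - g_t^2(X_t,\hat{\theta}) = 4W_t Z_t^{\T}\Delta + 4W_t R_t - (Z_t^{\T}\Delta)^2 - 2(Z_t^{\T}\Delta)R_t - R_t^2.
\end{equation*}
Subtracting the definition \eqref{linearized_term} of $\thickbar{M}_T(\hattheta)$ term-by-term leaves a remainder of $4W_t R_t - \frac{1}{2}(Z_t^{\T}\Delta)^2 - 2(Z_t^{\T}\Delta)R_t - R_t^2$ in each summand.

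The main obstacle is the cross-term $-2(Z_t^{\T}\Delta)R_t$, whose sign is indefinite and cannot be discarded outright. I would handle it by completing the square: observe that
\begin{equation*}
-\tfrac{1}{2}(Z_t^{\T}\Delta)^2 - 2(Z_t^{\T}\Delta)R_t - R_t^2 = -\tfrac{1}{2}\bigl(Z_t^{\T}\Delta + 2R_t\bigr)^2 + R_t^2 \leq R_t^2.
\end{equation*}
This is the crucial algebraic manipulation: it absorbs the cross-term into a nonpositive square, leaving only the clean residual $R_t^2$. After this step, the remainder in each summand is bounded by $4W_t R_t + R_t^2$.

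It then remains to handle the two surviving contributions. For the noise-weighted part, I would substitute $R_t = \tfrac{1}{2}\Delta^{\T} V_t \Delta$ and factor $\Delta$ outside the time average, giving
\begin{equation*}
\frac{1}{T}\sumt 4 W_t R_t = \Delta^{\T}\!\left(\frac{2}{T}\sumt W_t V_t\right)\!\Delta \leq \normbig{\frac{2}{T}\sumt W_t V_t}\,\norm{\Delta}^2,
\end{equation*}
by Cauchy--Schwarz and the definition of the spectral norm. For the deterministic residual, Assumption~\ref{assumption:regression_function} gives $\norm{V_t} = \norm{\gradtheta^2 f_t(X_t,\tilde{\theta}_t)} \leq L_2$, so $|R_t| \leq \tfrac{1}{2}L_2 \norm{\Delta}^2$, hence $\frac{1}{T}\sum_t R_t^2 \leq \tfrac{L_2^2}{4}\norm{\Delta}^4$. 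Summing the two contributions yields exactly \eqref{inequality:lemma_taylor_bound}. No integration, stochastic arguments, or concentration is needed at this stage: the lemma is a pathwise algebraic identity followed by deterministic norm bounds, and the completion-of-the-square step is what makes the higher-order terms take the specific form claimed.
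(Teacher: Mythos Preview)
Your proposal is correct and essentially identical to the paper's proof: the paper phrases your completion-of-the-square step as an application of the parallelogram inequality \eqref{parallelogram} with $\alpha = Z_t^{\T}\Delta + R_t$ and $\beta = Z_t^{\T}\Delta$, which yields exactly the same bound $-(Z_t^{\T}\Delta + R_t)^2 \leq -\tfrac{1}{2}(Z_t^{\T}\Delta)^2 + R_t^2$. The remaining steps (Cauchy--Schwarz for the noise term and the Hessian bound $\norm{V_t}\leq L_2$ from Assumption~\ref{assumption:regression_function}) match the paper line for line.
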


By taking the expectation over the sample $\{X_t,Y_t\}_{t=0}^{T-1}$ in \eqref{inequality:lemma_taylor_bound}, we obtain the following bound for the expected martingale offset of $\hattheta$:
\begin{align}\label{theorem:theorem2-2}
    \E M_T(\hattheta)\leq &\;\E\thickbar{M}_T(\hattheta)+\E\bracks{\normbig{\frac{2}{T}\sumt W_tV_t}\norm{\hat{\theta}-\theta_{\star}}^2}+\frac{L_2^2}{4}\E\norm{\hat{\theta}-\theta_{\star}}^4.
\end{align}

Notice that the bound for $\E M_T(\hattheta)$ in \eqref{theorem:theorem2-2} consists of the ``linearized'' term $\E\thickbar{M}_T(\theta)$ (note that the quadratic term on the right-hand side of \eqref{linearized_term} is negative)  and two higher-order terms depending on the parameter error $\norm{\hattheta-\theta_{\star}}$. In the next step of our proof, we focus on bounding the linearized component. 
\smallskip\\
\noindent\textbf{Step II: Bounding the ``linearized'' term $\E\thickbar{M}_T(\hattheta)$.}
In the following theorem, we provide a bound for the ``linearized'' term $\E\thickbar{M}_T(\theta)$ appearing on the right-hand side of \eqref{theorem:theorem2-2}. Our analysis employs tools for self-normalized martingales, similar to previous works in linear system identification (see, e.g., \cite{tsiamis2023statistical}). 

\begin{theorem}\label{theorem:expected_self_normalized_martingale}
Consider the predictor model \eqref{model} and the parameter class $\sfM$ under Assumptions~\cref{assumption:input_dependency,assumption:noise,assumption:regression_function,assumption:excitation}. Fix any $\gamma\in(0,1)$ and let $\hattheta$ satisfy \eqref{theta_hat}. Then, there exists $T_2$, defined as in \eqref{burn_in_time_2}, and a universal constant $c>0$ such that if $T\geq T_2$, we have:
\begin{equation}\label{expected_self_normalized_martingale}
    \E \thickbar{M}_T(\hattheta)\leq \frac{cd_{\theta}\sigma_w^2}{T}+\frac{1}{T^{1+\gamma}}.
\end{equation}
\end{theorem}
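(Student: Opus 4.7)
The plan is to reduce $\E\thickbar{M}_T(\hattheta)$ to a self-normalized martingale quadratic form and then control it via a persistence-of-excitation argument. Introduce $S_T := \sum_{t=0}^{T-1} W_tZ_t$, $V_T := \sum_{t=0}^{T-1} Z_tZ_t^{\T}$, $\bar{V}_T := \E V_T$, and $\Delta := \hattheta - \theta_{\star}$. The definition \eqref{linearized_term} reads
\[\thickbar{M}_T(\hattheta) = \frac{4}{T}\Delta^{\T}S_T - \frac{1}{2T}\Delta^{\T}V_T\Delta,\]
and maximizing the right-hand side over unconstrained $\Delta \in \R^{d_\theta}$ (which only relaxes the constraint $\hattheta \in \sfM$) yields the pointwise inequality $\thickbar{M}_T(\hattheta) \leq \frac{8}{T}S_T^{\T}V_T^{-1}S_T$ whenever $V_T \succ 0$, attained at $\Delta^{\star} = 4V_T^{-1}S_T$. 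This is the standard self-normalized reduction familiar from linear system identification.

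The heart of the argument is a quantitative persistence-of-excitation event $\calE := \{V_T \succeq \frac{1}{2}\bar{V}_T\}$. Assumption~\ref{assumption:excitation} guarantees $\bar{V}_T \succeq T\lambda_0 \mathbb{I}_{d_\theta}$, so on $\calE$ the Loewner bound $V_T^{-1} \preceq 2\bar{V}_T^{-1}$ is in force. Establishing that $\P(\calE^c)$ is polynomially small in $T$ is the main obstacle, since standard matrix Bernstein requires independence. Instead, I would apply Samson's scalar functional concentration inequality \cite{Samson2000} to each quadratic form $v^{\T}V_Tv = \sum_t (v^{\T}Z_t)^2$---whose summands are bounded by $L_1^2$ thanks to Assumption~\ref{assumption:regression_function}---and then uniformize over $v \in \setS^{d_\theta-1}$ via a $1/4$-net of cardinality at most $9^{d_\theta}$. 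Under Assumption~\ref{assumption:input_dependency}, this produces a per-direction deviation of order $\exp(-cT^{1-b_2}/(b_1L_1^4))$, and taking $T_2$ to be polynomial of degree $1/(1-b_2)$ in the relevant problem constants drives $\P(\calE^c)$ below any prescribed inverse polynomial in $T$.

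On $\calE$, the key probabilistic input is the martingale orthogonality afforded by Assumption~\ref{assumption:noise}: since $Z_t$ is $\calF_{t-1}$-measurable and $\E[W_t|\calF_{t-1}]=0$ with $\E[W_t^2|\calF_{t-1}] \leq \sigma_w^2$, all cross terms vanish and $\E[S_TS_T^{\T}] = \sum_t\E[W_t^2Z_tZ_t^{\T}] \preceq \sigma_w^2 \bar{V}_T$. Combining this with $V_T^{-1} \preceq 2\bar{V}_T^{-1}$ and a trace bound gives
\[\E\bigl[S_T^{\T}V_T^{-1}S_T\,\mathds{1}_{\calE}\bigr] \leq 2\,\tr\bigl(\bar{V}_T^{-1}\E[S_TS_T^{\T}]\bigr) \leq 2\sigma_w^2 d_\theta,\]
so $\E[\thickbar{M}_T(\hattheta)\,\mathds{1}_{\calE}] \leq 16\sigma_w^2 d_\theta / T$. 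For the complementary event I would discard the non-positive quadratic term in $\thickbar{M}_T(\hattheta)$, invoke $\|\Delta\| \leq 2B_\theta$, and apply Cauchy--Schwarz with the elementary bound $\E\|S_T\|^2 \leq \sigma_w^2L_1^2T$ (another consequence of martingale orthogonality plus $\|Z_t\| \leq L_1$) to obtain $\E[\thickbar{M}_T(\hattheta)\,\mathds{1}_{\calE^c}] \leq \tfrac{8B_\theta L_1\sigma_w}{\sqrt{T}}\sqrt{\P(\calE^c)}$. Because $\P(\calE^c)$ decays faster than any inverse polynomial once $T \geq T_2$, this contribution is absorbed into $T^{-(1+\gamma)}$. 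Summing the two pieces yields \eqref{expected_self_normalized_martingale} with a universal constant $c$ absorbing the factor $16$.
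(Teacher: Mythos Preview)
Your proof is correct and takes a genuinely different---and in several respects simpler---route than the paper's. Both arguments begin with the same self-normalized reduction $\thickbar{M}_T(\hattheta) \leq \frac{8}{T}S_T^{\T}V_T^{-1}S_T$ on the persistence-of-excitation event, and both control that event via Samson's scalar concentration applied to the quadratic forms $v^{\T}V_Tv$ plus an $\e$-net over $\setS^{d_\theta-1}$ (this is exactly the content of the paper's Lemma~\ref{lemma:lower_isometry}). The divergence is in what happens next. On the good event, the paper invokes a self-normalized martingale tail bound of Abbasi--Yadkori type (\cite[Theorem~4.1]{Ziemann2023}), which introduces a $\log\det(\mathbb{I}_{d_\theta}+T\hatSigmaT\Sigma^{-1})$ term and therefore forces the paper to also prove an \emph{upper} isometry $\hatSigmaT\preceq 8\barSigma$ (Lemma~\ref{lemma:upper_isometry}) merely to keep this log-determinant bounded; it then integrates the resulting tail to reach the constant $c=653$. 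You sidestep all of this by observing that once $V_T^{-1}\preceq 2\bar{V}_T^{-1}$ holds with $\bar{V}_T$ \emph{deterministic}, the martingale second-moment identity $\E[S_TS_T^{\T}]\preceq\sigma_w^2\bar{V}_T$ (a direct consequence of the $\calF_{t-1}$-measurability of $Z_t$ and conditional sub-Gaussianity) gives $\E[S_T^{\T}V_T^{-1}S_T\,\mathds{1}_\calE]\leq 2\sigma_w^2 d_\theta$ in one line, with $c=16$. On the bad event, the paper falls back on a $\log T/T$ offset-complexity bound obtained through a one-step discretization argument; your Cauchy--Schwarz bound using $\E\|S_T\|^2\leq\sigma_w^2L_1^2T$ is again more direct. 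The net effect is that your proof needs only lower isometry, avoids the self-normalized martingale tail machinery entirely, and yields a sharper universal constant. What the paper's route buys in return is a high-probability bound on $\thickbar{M}_T(\hattheta)$ as an intermediate byproduct (its display \eqref{theorem:self_normalized_martingale-3}), which your expectation-only argument does not produce.
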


Notice that the bound in \eqref{expected_self_normalized_martingale} decays at the optimal rate of $\frac{\sigma_w^2 d_\theta}{T}$ (cf. Remark~\ref{remark:known_asymptotics}), up to a constant factor $c>0$ and a higher-order term $1/T^{\gamma+1}$, which becomes negligible in finite time (set for instance $\gamma = 1/4)$. Next, we present the final step of our proof, which combines the bound \eqref{expected_self_normalized_martingale} with faster decaying bounds for the higher-order terms on the right-hand side of \eqref{theorem:theorem2-2}.
\smallskip\\
\noindent\textbf{Step III: Bounding $\E M_T(\hattheta)$ using the bound \eqref{expected_self_normalized_martingale} for $\E\thickbar{M}_T(\hattheta)$ and the bound \eqref{inequality:theorem2_old} for the higher-order terms in \eqref{theorem:theorem2-2}.} In Step II we provided a bound of order $\calO(T^{-1})$ for the ``linearized'' term $\E\thickbar{M}_T(\theta)$ appearing on the right-hand side of \eqref{theorem:theorem2-2}. To bound $\E M_T(\hattheta)$ at a rate of $\calO(T^{-1})$, it suffices to derive faster decaying bounds of order $\calO(T^{1/(1+\gamma)})$ for the higher-order terms, where $\gamma\in(0,1/2)$. Combining \eqref{expected_self_normalized_martingale} from Theorem~\ref{theorem:expected_self_normalized_martingale} and the parameter error bound given in \eqref{inequality:theorem2_old}, we obtain the following corollary.

\begin{corollary}\label{corollary:expected_M__hattheta_bound}
Consider the predictor model \eqref{model} and the parameter class $\sfM$ under Assumptions~\cref{assumption:input_dependency,assumption:noise,assumption:regression_function,assumption:excitation,assumption:expansivity}. Fix any $\gamma\in(0,1/2)$ and let $\hattheta$ satisfy \eqref{theta_hat}. Then, there exist  $T_1$, $T_2$, $T_3$, defined as in Theorem~\ref{theorem:main_result}, and a universal constant $c>0$ such that if $T\geq \max\{T_1,T_2,T_3\}$, we have:
\begin{equation}\label{expected_M__hattheta_bound}
    \E M_T(\hattheta)\leq \frac{cd_{\theta}\sigma_w^2}{T}+\frac{2}{T^{1+\gamma}}.
\end{equation}
\end{corollary}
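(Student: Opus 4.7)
The approach is to combine the Taylor expansion inequality of Lemma~\ref{lemma:taylor_bound}, the optimal rate on the linearized piece from Theorem~\ref{theorem:expected_self_normalized_martingale}, and the parameter error inequality \eqref{inequality:theorem2_old}, and then control the remaining higher-order terms through a short bootstrap. Taking expectation in \eqref{inequality:lemma_taylor_bound}, one first writes
\begin{equation*}
\E M_T(\hattheta) \;\leq\; \E\thickbar{M}_T(\hattheta) + \E\!\left[\normbig{\tfrac{2}{T}\sumt W_tV_t}\norm{\hattheta-\theta_\star}^2\right] + \tfrac{L_2^2}{4}\E\norm{\hattheta-\theta_\star}^4.
\end{equation*}
The first term is immediately bounded by $c d_\theta\sigma_w^2/T + 1/T^{1+\gamma}$ via Theorem~\ref{theorem:expected_self_normalized_martingale} as soon as $T\geq T_2$, so it suffices to show that the two remaining remainders jointly contribute at most $1/T^{1+\gamma}$ for $T\geq T_3$.

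To control the fourth-moment term, I would invoke inequality \eqref{theorem:theorem1-11} from Theorem~\ref{theorem:theorem2} together with \cite[Lemma 10]{Ziemann2022a} and its second-moment counterpart (available because $\sup_{\theta\in\sfM}M_T(\theta)$ concentrates with sub-exponential tails) to obtain the preliminary estimates $\E M_T(\hattheta) = \mathcal{O}(d_\theta\sigma_w^2\log T/T)$ and $\E M_T^2(\hattheta) = \mathcal{O}(d_\theta^2\sigma_w^4\log^2 T/T^2)$ for $T\geq T_1$. Squaring \eqref{inequality:theorem2_old} and taking expectation yields $\E\norm{\hattheta-\theta_\star}^4 \leq 128a^2\,\E M_T^2(\hattheta) + 8a^2L_1^4B_\theta^4/T^2 = \mathcal{O}(\log^2 T/T^2)$. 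For the cross term, I would apply a matrix Bernstein/Freedman inequality to the martingale difference sequence $\{W_tV_t\}$---using $\norm{V_t}\leq L_2$ from Assumption~\ref{assumption:regression_function}, conditional $\sigma_w$-sub-Gaussianity of $W_t$ from Assumption~\ref{assumption:noise}, and the Samson-type dependency correction already used in Theorem~\ref{theorem:theorem2}---to obtain $\E\normbig{\tfrac{2}{T}\sumt W_tV_t}^2 = \mathcal{O}(d_\theta\sigma_w^2L_2^2/T)$. Cauchy--Schwarz then delivers
\begin{equation*}
\E\!\left[\normbig{\tfrac{2}{T}\sumt W_tV_t}\norm{\hattheta-\theta_\star}^2\right] \;\leq\; \sqrt{\E\normbig{\tfrac{2}{T}\sumt W_tV_t}^2\cdot\E\norm{\hattheta-\theta_\star}^4} \;=\; \mathcal{O}\!\left(\tfrac{\sqrt{d_\theta}\,\log T}{T^{3/2}}\right).
\end{equation*}

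Combining, both remainders fall below $1/T^{1+\gamma}$ once $\sqrt{d_\theta}\log T \leq T^{1/2-\gamma}$ (the binding constraint coming from the cross term), which solves to $T\geq \poly_{1/(1-2\gamma)}(d_\theta,\sigma_w,B_\theta,L_1,L_2,L_3,a)$, matching the form of $T_3$ in \eqref{burn_in_time_3}. The main obstacle is the fourth-moment estimate: the naive bound $\norm{\hattheta-\theta_\star}^2\leq 4B_\theta^2$ only gives $\E\norm{\hattheta-\theta_\star}^4 = \mathcal{O}(\log T/T)$, which cannot beat $1/T^{1+\gamma}$. Overcoming this forces an upgrade from the first-moment offset complexity bound to a second-moment (or sub-exponential) one, and absorbing the resulting $\log T$ factor into the burn-in is precisely the mechanism that produces the $1/(1-2\gamma)$ exponent of $T_3$.
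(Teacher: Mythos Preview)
Your overall decomposition matches the paper's, and the fourth-moment bootstrap via a tail or second-moment bound on $\sup_{\theta}M_T(\theta)$ is in the same spirit as the paper's event-splitting argument. However, the cross-term step contains a genuine gap.

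You write that $\{W_tV_t\}$ is a martingale difference sequence and propose to apply matrix Bernstein/Freedman to it. This is false: recall that $V_t=\nabla_\theta^2 f_t(X_t,\tilde\theta_t)$ with $\tilde\theta_t=\alpha_t\hattheta+(1-\alpha_t)\theta_\star$, so $V_t$ depends on $\hattheta$, which in turn depends on the \emph{entire} sample, including $W_t,W_{t+1},\ldots,W_{T-1}$. Consequently $V_t$ is not $\calF_{t-1}$-measurable and $\E[W_tV_t\mid\calF_{t-1}]\neq 0$ in general, so no martingale concentration applies directly. The Samson-type correction you invoke is irrelevant here: that machinery handles the dependence in the $X_t$'s, not the anticipative dependence of $V_t$ on future noise. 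Without fixing this, your claimed bound $\E\bigl\|\tfrac{2}{T}\sum_t W_tV_t\bigr\|^2=\mathcal{O}(d_\theta\sigma_w^2L_2^2/T)$ has no justification; the naive bound using only $\|V_t\|\le L_2$ gives $\mathcal{O}(L_2^2\sigma_w^2)$, which is useless.

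The paper's remedy is to first pass to a supremum over the parameter, bounding the cross term by $\E\bigl[\sup_{\theta\in\sfM_{\bar r}}\|\tfrac{2}{T}\sum_t W_tV_t(\theta)\|\cdot\|\hattheta-\theta_\star\|^2\bigr]$ on a high-probability event $\{\|\hattheta-\theta_\star\|\le\bar r\}$ (with $\bar r^2\asymp a\log T/T$), and then to control the supremum via an $\varepsilon$-net over $\sfM_{\bar r}$: for each \emph{fixed} $\theta$ the summands become $\calF_{t-1}$-adapted and the tower-property sub-Gaussian calculation goes through, after which a union bound over the net plus the Lipschitz Hessian assumption (the constant $L_3$) handles the discretization error. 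This is also why $L_3$ appears in $T_3$. On the complementary low-probability event, the paper uses Cauchy--Schwarz against the indicator, much as you do. Your direct Cauchy--Schwarz route could be made to work, but only after inserting exactly this uniform-in-$\theta$ covering step; once you do that, you essentially recover the paper's argument (with an extra $\log T$ in the cross-term rate, which is harmless for $\gamma<1/2$).
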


Notice that the dominant term on the right-hand side of \eqref{expected_M__hattheta_bound} decays at a rate of $\frac{cd_\theta \sigma_W^2}{T}$, which is optimal up to a constant factor $c>0$ (see Remark~\ref{remark:known_asymptotics}). We note that the above bound improves upon the rate $\calO(\log T/T)$ that can been shown for the martingale offset complexity $\E [\sup_{\theta\in\sfM}M_T(\theta)]$ via maximal inequalities \cite{Ziemann2022a}. The key point here is that the offset process locally, once $\widehat \theta$ is sufficiently near $\theta_\star$, behaves like a linear offset process.
 
Combining \eqref{theorem:theorem1-11} with \eqref{expected_M__hattheta_bound} from Corollary~\ref{corollary:expected_M__hattheta_bound}, we complete the proof of \eqref{prediction_error_bound} in Theorem~\ref{theorem:main_result}. Next, we instantiate Theorem~\ref{theorem:main_result} to provide finite-sample guarantees for the quadratic prediction error method for AutoRegressive Moving Average (ARMA) models.

\section{Case Study: The ARMA Model}
\label{sec:examples}
In this section, we demonstrate the applicability of our rate-optimal non-asymptotic analysis of the quadratic prediction error method to scalar ARMA models. Our result relies on a standard analysis from \cite{Tsybakov2009,Davis2013} that allows converting any ARMA model into a predictor model of the form \eqref{model}. For completeness of presentation, we briefly review the conversion methodology and then present a rate-optimal non-asymptotic bound for a particular class of ARMA models. 

Consider the scalar ARMA$(p,q)$ model given by: 
\begin{equation}\label{eq:arma2}
   Y_t = \sum_{i=1}^p a_i^{\star} Y_{t-i}+ \sum_{j=0}^q b_j^{\star} W_{t-j},
\end{equation}
where the noise variables $W_t\in\setR$ are assumed to be independent and zero-mean, and the initial conditions are assumed to be zero, i.e., $Y_t=0$, $W_t=0$, for all $t<0$. Suppose that $b_0^{\star}=1$ and the parameter $\theta_{\star}:=\begin{bmatrix}a_1^{\star},\ldots,a_p^{\star},b_0^{\star},\ldots,b_q^{\star}\end{bmatrix}^{\T}\in\setR^{p+q+1}$ belongs to some known set $\sfM\subseteq\setB_{\Btheta}^{d_{\theta}}$, where $\Btheta$ is a positive constant. The assumption that $b_0^{\star}=1$ can always be ensured by providing additional artificial noise components of zero mean and 
variance, and applying linear transformations to the noise variables $W_t$ \cite{Davis2013}.  Let $z^{-1}$ denote the backward-shift operator, defined by $z^{-1}e_t:=e_{t-1}$, for any stochastic process $\{e_t\}_{-\infty}^{\infty}$. Powers of $z^{-1}$ are defined recursively by $z^{-(i+1)}e_t:=z^{-1}(z^{-i}e_t)$ so that $z^{-i}e_t=e_{t-i}$. It is straightforward to show that \eqref{eq:arma2} is equivalent to $A_{\theta_{\star}}(z^{-1})Y_t = B_{\theta_{\star}}(z^{-1})W_t$, where $A_{\theta_{\star}}(\cdot)$ and $B_{\theta_{\star}}(\cdot)$ are polynomials given by: 
\begin{equation*}\label{lagpolys_A}
\begin{aligned}
    A_{\theta_{\star}}(\lambda) = 1- \sum_{i=1}^p a_i^{\star} \lambda^i,\; B_{\theta_{\star}}(\lambda) = \sum_{j=0}^q b_j^{\star} \lambda^j,
\end{aligned}
\end{equation*}
respectively, for all $\lambda\in\setR$. For each $t$, let $\thickbar{\calF}_t:=\sigma(Y_0,\ldots,Y_t)$ be the $\sigma$-field generated by the outputs $Y_0,\ldots,Y_{t}$.  It is known \cite[Section 2.6]{Tsybakov2009} that the conditional expectation $\widehat Y_t:=\E [ Y_t | \thickbar{\calF}_{t-1}]$ satisfies:
\begin{equation}\label{yhat_definition}
    B_{\theta_{\star}}(z^{-1}) \widehat Y_t = [B_{\theta_{\star}}(z^{-1})-A_{\theta_{\star}}(z^{-1})]Y_t,
\end{equation}
for all $t=0,\ldots,T-1$. Hence, we can rewrite model \eqref{eq:arma2} in the predictor model form \eqref{model} with regression functions: 
\begin{equation}\label{arma_regression_functions}
    f_t(X_t,\theta_\star) := \widehat Y_t,
\end{equation}
where $X_t=\begin{bmatrix}Y_0,\ldots,Y_{t-1}\end{bmatrix}^{\T}$. The conditional expectations $\hat{Y}_0,\ldots,\hat{Y}_{T-1}$ can be computed recursively from \eqref{yhat_definition} with zero initial condition, i.e., $\hat{Y}_t=0$, for all $t<0$. We can similarly define the regression functions $f_t(\cdot,\theta)$ corresponding to any parameter $\theta$ in the class $\sfM$. 
In the corollary that follows, we combine Theorem~\ref{theorem:main_result} with the predictor model form of the ARMA model \eqref{eq:arma2} derived above and provide the first rate-optimal non-asymptotic prediction error bounds for ARMA models.

\begin{corollary}\label{corollary:ARMA}
Consider the predictor model form of the ARMA$(p,q)$ model \eqref{eq:arma2}, defined by \eqref{model} and \eqref{arma_regression_functions}, as well as the parameter class $\sfM$, under Assumptions~\cref{assumption:input_dependency,assumption:noise,assumption:regression_function,assumption:excitation,assumption:expansivity}. Fix any $\gamma\in(0,1/2)$ and let $\hattheta$ satisfy \eqref{theta_hat}. Then, there exist $T_1$, $T_2$, $T_3$, defined as in \Cref{theorem:main_result}, and a universal constant $c>0$ such that if $T\geq\max\{T_1,T_2,T_3\}$, we have:
\begin{align*}
    &\E\left[\frac{1}{T}\sumt(f_t(X_t,\hattheta)-f_t(X_t,\theta_{\star}))^2\right]\leq \frac{c(p+q)  \sigma_w^2}{T}+\frac{B}{T^{\gamma+1}},
\end{align*}
where $B=2L_1^2\Btheta^2+16$.
\end{corollary}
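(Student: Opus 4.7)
The plan is to treat Corollary~\ref{corollary:ARMA} as a direct specialization of Theorem~\ref{theorem:main_result}: once the ARMA$(p,q)$ model is recast through the one-step conditional-expectation predictor \eqref{yhat_definition}, it lives inside the framework \eqref{model}, after which the main theorem can be invoked with $d_\theta = p+q$. Since the corollary already postulates Assumptions~\cref{assumption:input_dependency,assumption:noise,assumption:regression_function,assumption:excitation,assumption:expansivity}, the remaining work reduces to (i) verifying the predictor form, (ii) correctly identifying the effective parameter count, and (iii) transporting the abstract bound to the ARMA notation.

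First I would argue that, under the normalization $b_0^{\star}=1$, the residual $Y_t-\widehat{Y}_t$ coincides with the ARMA driving noise $W_t$. Rearranging \eqref{yhat_definition} gives $B_{\theta_{\star}}(z^{-1})(Y_t-\widehat{Y}_t)=A_{\theta_{\star}}(z^{-1})Y_t$, which combined with $A_{\theta_{\star}}(z^{-1})Y_t=B_{\theta_{\star}}(z^{-1})W_t$ and $b_0^{\star}=1$ yields $Y_t-\widehat{Y}_t=W_t$. Thus $Y_t=f_t(X_t,\theta_{\star})+W_t$ holds with $f_t(X_t,\theta_{\star})=\widehat{Y}_t$ and $X_t=[Y_0,\ldots,Y_{t-1}]^{\T}$, matching \eqref{model}. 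Next, since $b_0^{\star}=1$ is fixed and known, the minimization in \eqref{theta_hat} effectively ranges over the $p+q$ free coordinates $(a_1^{\star},\ldots,a_p^{\star},b_1^{\star},\ldots,b_q^{\star})$, so substituting $d_\theta=p+q$ into \eqref{prediction_error_bound} gives the stated leading term. The swap from $\barX_t$ in Theorem~\ref{theorem:main_result} to $X_t$ in the corollary is harmless because both are distributed according to $\sfP_{\sfX}$ and the prediction error is a functional of this marginal alone.

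The main obstacle, although formally sidestepped by the corollary's hypotheses, is not the reduction itself but rather exhibiting a concrete subclass of ARMA models for which the inherited assumptions genuinely hold. Assumption~\ref{assumption:regression_function} demands uniform smoothness and boundedness of $\widehat{Y}_t(\theta)$ and its first two $\theta$-derivatives, which translates into $B_{\theta}(z^{-1})$ having roots uniformly bounded away from the unit disk for every $\theta\in\sfM$; Assumption~\ref{assumption:excitation} amounts to a persistence-of-excitation condition on the Fisher information, typically guaranteed under coprimeness of $A_{\theta_{\star}}$ and $B_{\theta_{\star}}$; and Assumption~\ref{assumption:input_dependency} reflects exponential stability of the output process, which for ARMA reduces to the roots of $A_{\theta_{\star}}$ lying strictly inside the unit disk. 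Pinning down explicit sufficient conditions on $(p,q,\sfM)$ that enforce all of these structural requirements simultaneously is the technical content hidden beneath the corollary's deceptively short statement, and is what the qualifier ``identifiable ARMA'' in the abstract is meant to capture.
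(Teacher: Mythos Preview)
Your proposal is correct and matches the paper's own approach, which is literally the single sentence that the result follows directly from Theorem~\ref{theorem:main_result} given the predictor-model form of \eqref{eq:arma2}; you in fact supply more detail than the paper by explicitly verifying $Y_t-\widehat Y_t=W_t$ and identifying the effective parameter count as $d_\theta=p+q$ once $b_0^\star=1$ is fixed.

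One minor caveat: your justification for replacing $\barX_t$ by the training covariates $X_t$---that the prediction error ``is a functional of this marginal alone''---does not hold as stated, because $\hattheta$ is correlated with $X_{0:T-1}$, so the in-sample and fresh-sample expectations are not interchangeable in general. The paper does not address this discrepancy either, and the cleanest reading is that the corollary's left-hand side is meant to be the fresh-sample quantity inherited verbatim from Theorem~\ref{theorem:main_result}.
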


The proof of Corollary~\ref{corollary:ARMA}  follows directly from Theorem~\ref{theorem:main_result}, given the predictor model form of model \eqref{eq:arma2}.

Note that the above corollary applies to a particular class of ARMA models that satisfy Assumptions~\cref{assumption:input_dependency,assumption:noise,assumption:regression_function,assumption:excitation,assumption:expansivity}. Assumptions~\cref{assumption:input_dependency,assumption:noise,assumption:regression_function,assumption:excitation} are relatively benign for this example, and hold as long as the noise sequence $\{W_t\}_{t=0}^{T-1}$ is bounded and the system \eqref{eq:arma2} is stable. For Assumption 1, see e.g. \cite{Ziemann2022} for the case of $B_{\theta_{\star
}}(\lambda)=1$. Assumption 2 is true by construction of the regression functions \eqref{arma_regression_functions} corresponding to model \eqref{eq:arma2} as well as the hypothesis of bounded noise. Assumption 3 can be verified via arguments entirely analogous to those in \cite{caines1976prediction} as long as $\{W_t\}_{t=0}^{T-1}$ is bounded and the system \eqref{eq:arma2} is stable. Sufficient conditions for guaranteeing Assumption \cref{assumption:excitation}, related to the roots of the polynomials $A_{\theta_{\star}}(\lambda)$ and $B_{\theta_{\star}}(\lambda)$, can be found in \cite{klein1996fisher}. Assumption~\ref{assumption:expansivity} restricts our result to a specific class of quadratically identifiable ARMA models (see \eqref{expansivity_condition}). As previously explained in Section~\ref{Problem_Formulation}, this assumption is weaker than the corresponding assumption made in the asymptotic literature for the quadratic prediction error method \cite{Ljun1980}. Exploring potential relaxations of the identifiability condition \eqref{expansivity_condition} is an interesting problem for future work.

\vspace{-8pt}

\section*{Acknowledgements}
Charis Stamouli and George J. Pappas acknowledge support from NSF award SLES-2331880. Ingvar Ziemann is supported by a Swedish Research Council international postdoc grant.


\bibliographystyle{IEEEtran} 
\bibliography{arxiv_version}

\appendix
\bigskip
\medskip
\noindent\textbf{\Large Appendix}
\section{Basic Definitions and Results}
In this subsection, we present a few basic lemmas that we will use in the proofs of our theorems in the subsequent subsections.

We first state a standard result from algebra. We include the proof for completeness.
\begin{lemma}\label{lemma:parallelogram}
For all $\alpha,\beta\in\setR$, we have:
\begin{align}\label{parallelogram}
    \frac{\beta^2}{2}-(\alpha-\beta)^2\leq\alpha^2\leq2\beta^2+2(\alpha-\beta)^2
\end{align}
\end{lemma}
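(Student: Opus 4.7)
The plan is to prove the right inequality first, then obtain the left inequality from it by interchanging the roles of $\alpha$ and $\beta$. Both inequalities are instances of the elementary fact $(x+y)^2 \leq 2x^2 + 2y^2$, which in turn follows from expanding $(x-y)^2 \geq 0$.

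For the upper bound on $\alpha^2$, I would write $\alpha = \beta + (\alpha-\beta)$ and apply $(x+y)^2 \leq 2x^2 + 2y^2$ with $x=\beta$ and $y=\alpha-\beta$, which yields directly
\[
\alpha^2 \leq 2\beta^2 + 2(\alpha-\beta)^2.
\]

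For the lower bound on $\alpha^2$, I would apply the same inequality but with the roles of $\alpha$ and $\beta$ swapped: writing $\beta = \alpha - (\alpha-\beta)$ and using $(x-y)^2 \leq 2x^2 + 2y^2$ (equivalent to the previous inequality) gives
\[
\beta^2 \leq 2\alpha^2 + 2(\alpha-\beta)^2.
\]
Dividing by $2$ and rearranging yields $\tfrac{\beta^2}{2} - (\alpha-\beta)^2 \leq \alpha^2$, as required.

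There is no real obstacle here; the lemma is a two-line algebraic identity chain. The only minor care needed is to note that the key inequality $(x+y)^2 \leq 2x^2 + 2y^2$ follows from $2x^2 + 2y^2 - (x+y)^2 = (x-y)^2 \geq 0$, so the whole proof reduces to two applications of a single completed-square identity.
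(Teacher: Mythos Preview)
Your proof is correct and follows essentially the same approach as the paper: both arguments derive the right inequality by writing $\alpha = \beta + (\alpha-\beta)$ and applying the elementary bound $(x+y)^2 \le 2x^2 + 2y^2$ (equivalently $2xy \le x^2+y^2$ from $(x-y)^2\ge 0$), and then obtain the left inequality by the symmetric argument with $\alpha$ and $\beta$ interchanged.
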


\begin{proof}
Let us prove the right inequality in \eqref{parallelogram}. For all $x,y\in\setR$, we have $(x-y)^2\geq0$, which implies that:
\begin{align*}
     2xy\leq x^2+y^2.
\end{align*}
Setting $x=\beta$ and $y=\alpha-\beta$, we get:
\begin{equation}\label{lemma:parallelogram-1}
    2\beta(\alpha-\beta)\leq \beta^2+(\alpha-\beta)^2.
\end{equation}
For all $\alpha,\beta\in\setR$, we can write:
\begin{align*}
    \alpha^2=(\beta+(\alpha-\beta))^2=\beta^2+(\alpha-\beta)^2+2\beta(\alpha-\beta)\leq2\beta^2+2(\alpha-\beta)^2,
\end{align*}
where the inequality follows from \eqref{lemma:parallelogram-1}. Similarly, we can show the left inequality in \eqref{parallelogram}.
\end{proof}

Next, we state the definition of $\e$-nets and state a standard upper bound for the smallest possible cardinality of such sets corresponding to the ball $\setS^{d-1}$.

\begin{definition}[$\e$-net and Covering numbers]\label{definition:epsilon_net}\cite[Definitions 4.2.1, 4.2.2]{Vershynin2018}
Let $(\setX,d)$ be a compact metric space and fix $\e>0$. A subset $\calN_{\e}$ of $\setX$ is called an $\e$-net of $\setX$ if every point of $\setX$ is within radius $\e$ of a point of $\calN_{\e}$, that is:
\begin{equation}\label{definition_e_net}
    \sup_{x\in\setX}\inf_{x'\in\calN_{\e}}d(x,x')\leq\e.
\end{equation}
Moreover, the smallest possible cardinality of $\calN_{\e}$ for which \eqref{definition_e_net} holds is called the covering number at resolution $\e$ of $(\setX,d)$ and is denoted by $\calN(\e,\setX,d)$.
\end{definition}

We note that herein when we refer to $\e$-nets, we imply $\e$-nets of the smallest possible cardinality.

\begin{lemma}\label{lemma:covering_number}\cite[Corollary 4.2.13]{Vershynin2018}
For any $\e>0$, the covering numbers of $\setS^{d-1}$ satisfy:
\begin{equation}\label{covering_number}
    \calN(\e,\setS^{d-1},\norm{\cdot})\leq\pars{\frac{2}{\e}+1}^{d}.
\end{equation}
\end{lemma}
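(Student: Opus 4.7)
The plan is to exploit the standard duality between covering numbers and packing numbers, combined with a volumetric comparison in $\setR^d$. First, I would select a \emph{maximal} $\e$-separated subset $\calN$ of $\setS^{d-1}$: a collection of points of $\setS^{d-1}$ any two distinct members of which differ by more than $\e$ in Euclidean norm, and which is maximal with respect to this property. Maximality immediately forces $\calN$ to be an $\e$-net, because any point $x\in\setS^{d-1}$ that failed to lie within Euclidean distance $\e$ of some element of $\calN$ could be adjoined to $\calN$ without violating separation, contradicting maximality. This gives $\calN(\e,\setS^{d-1},\norm{\cdot}) \leq |\calN|$, and it only remains to bound the cardinality of an $\e$-packing of $\setS^{d-1}$.

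Next, I would carry out a volume comparison. Around each $x\in\calN$, consider the open Euclidean ball $B(x,\e/2)$. Since the centers in $\calN$ are strictly more than $\e$ apart, these smaller balls are pairwise disjoint. On the other hand, because each center lies on $\setS^{d-1}$, every such ball is contained in the enlarged ball $\setB_{1+\e/2}^{d}$ centered at the origin. Using that Lebesgue volume in $\setR^d$ scales as the $d$-th power of the radius (so $\mathrm{vol}(\setB_r^{d}) = c_d r^{d}$ for a constant $c_d>0$ depending only on $d$), summing over $\calN$ and comparing to the volume of the enclosing ball yields
\begin{equation*}
    |\calN|\, c_d (\e/2)^{d} \;\leq\; c_d (1+\e/2)^{d}.
\end{equation*}
Cancelling $c_d$ and rearranging gives $|\calN|\leq (2/\e + 1)^{d}$, which is exactly \eqref{covering_number}.

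The argument is essentially elementary and I do not anticipate any genuine obstacle. The only mild subtlety is the choice of using balls of radius $\e/2$ (rather than $\e$) around the packing centers: the smaller radius is needed to guarantee pairwise disjointness while the extra $\e/2$ is absorbed into the radius of the enclosing ball. This is the standard trick and produces the factor $2/\e + 1$ instead of $1/\e$; the proof is complete in the two short steps above.
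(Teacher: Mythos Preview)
Your argument is correct and is precisely the standard volumetric packing proof of this bound. Note, however, that the paper does not supply its own proof of this lemma: it is stated with a citation to \cite[Corollary 4.2.13]{Vershynin2018} and used as a black box. Your proposal reproduces exactly the argument Vershynin gives there (maximal $\e$-separated set $\Rightarrow$ $\e$-net, then disjoint balls of radius $\e/2$ inside $\setB_{1+\e/2}^{d}$ and a volume comparison), so there is no meaningful methodological difference to discuss.
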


We can use Lemma~\ref{lemma:covering_number} to derive basic bounds for the covering numbers of the parameter class $\sfM$ and the function class:
\begin{align}\label{class_G}
    \calG:=\left\{g_{\theta}:(x_0,\ldots,x_{T-1})\mapsto \frac{1}{\sqrt{T}}(g_0(x_0,\theta),\ldots,g_{T-1}(x_{T-1},\theta))\,\Big|\, x_t\in\sfX, \forall t, \theta\in\sfM\right\},
\end{align}
where $g_t(x,\theta)=f_t(x,\theta)-f_t(x,\theta_{\star})$, for all $(x,\theta)\in\sfX\times\sfM$ and all $t=0,\ldots,T-1$. We present these bounds in the next two lemmas.

\begin{lemma}\label{lemma:M_covering_number}
For any $\e\in(0,\Btheta]$, the covering numbers of the parameter class $\sfM\subseteq\setB_{\Btheta}^{\dtheta}$ satisfy:
\begin{equation}\label{M_covering_number}
\calN(\e,\sfM,\norm{\cdot})\leq\pars{\frac{3\Btheta}{\e}}^{d_{\theta}}.
\end{equation}   
\end{lemma}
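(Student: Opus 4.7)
The plan is a standard volumetric packing argument applied directly to $\sfM\subseteq\setB_{\Btheta}^{\dtheta}$, so that the covering centers remain inside $\sfM$ as required by \Def{definition:epsilon_net}.

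First, I would let $\calN_\e=\{\theta_1,\ldots,\theta_N\}\subseteq\sfM$ be a maximal $\e$-separated subset of $\sfM$, meaning that $\norm{\theta_i-\theta_j}>\e$ for all $i\neq j$ and that no further point of $\sfM$ can be adjoined while preserving this separation. Maximality forces every $\theta\in\sfM$ to lie within distance $\e$ of some $\theta_i\in\calN_\e$ (otherwise $\theta$ itself could be added, contradicting maximality), so $\calN_\e$ is an $\e$-net of $\sfM$ in the sense of \Def{definition:epsilon_net}, and hence $\calN(\e,\sfM,\norm{\cdot})\leq N$.

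Next, I would bound $N$ by a volume comparison in $\setR^{\dtheta}$. Since the $\theta_i$ are pairwise $\e$-separated, the open Euclidean balls $\theta_i+\setB_{\e/2}^{\dtheta}$ are pairwise disjoint. Because $\sfM\subseteq\setB_{\Btheta}^{\dtheta}$, the triangle inequality gives $\theta_i+\setB_{\e/2}^{\dtheta}\subseteq\setB_{\Btheta+\e/2}^{\dtheta}$ for every $i$. Comparing Lebesgue volumes and using the scaling $\mathrm{vol}(\setB_r^{\dtheta})=r^{\dtheta}\,\mathrm{vol}(\setB_1^{\dtheta})$ yields
\[
    N\,(\e/2)^{\dtheta}\;\leq\;(\Btheta+\e/2)^{\dtheta},
\]
so $N\leq(2\Btheta/\e+1)^{\dtheta}$. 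Finally, for $\e\in(0,\Btheta]$ one has $2\Btheta/\e+1\leq 3\Btheta/\e$, which gives the claimed bound $\calN(\e,\sfM,\norm{\cdot})\leq(3\Btheta/\e)^{\dtheta}$.

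There is no real obstacle here: the argument mirrors the classical volumetric proof of the covering bound for the Euclidean ball (the same mechanism underlying \Lem{lemma:covering_number}), with the unit ball replaced by $\setB_{\Btheta}^{\dtheta}$ and the packing restricted to $\sfM$. The only mild point worth flagging is that insisting on an internal net inside $\sfM$ (rather than packing in the enclosing ball) costs nothing in the final constant, because the $\e/2$-enlargement of any point of $\sfM$ still fits inside $\setB_{\Btheta+\e/2}^{\dtheta}$, so the one-line volume inequality above goes through verbatim.
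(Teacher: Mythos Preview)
Your proof is correct and takes a different route from the paper. You give a self-contained volumetric packing argument (a maximal $\e$-separated subset of $\sfM$ followed by a volume comparison), whereas the paper reduces to the sphere covering bound of \Lem{lemma:covering_number}: it normalizes each $\theta\in\setB_{\Btheta}^{\dtheta}$ to the unit sphere, approximates there by some $u_i$ in an $\e/\Btheta$-net, and rescales back to $\theta_i=\norm{\theta}u_i$. Your approach is more elementary and automatically produces an internal net of $\sfM$ as required by \Def{definition:epsilon_net}; the paper's approach is shorter once \Lem{lemma:covering_number} is available, though as written its approximants $\theta_i$ depend on $\norm{\theta}$ and so do not directly assemble into a single finite net of the ball, so your argument is actually cleaner on that point.
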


\begin{proof}
Fix any $\e'\in(0,1]$ and let $\calN_{\e'}$ be an $\e'$-net of $\setS^{d_{\theta}-1}$ with respect to the norm $\norm{\cdot}$. From Lemma~\ref{lemma:covering_number} we deduce that:
\begin{equation}\label{lemma:M_covering_number-1}
|\calN_{\e'}|\leq\pars{\frac{2}{\e'}+1}^{d_{\theta}}\leq\pars{\frac{3}{\e'}}^{d_{\theta}},   
\end{equation}
where the last inequality follows from the fact that $\e'\in(0,1]$. Given that $\sfM\subseteq\setB_{\Btheta}^{d_{\theta}}$, we have:
\begin{equation*}
\calN(\e,\sfM,\norm{\cdot})\leq \calN(\e,\setB_{\Btheta}^{\dtheta},\norm{\cdot}),
\end{equation*}
which implies that it suffices to obtain bounds for the covering numbers of $\setB_{\Btheta}^{d_{\theta}}$. Note that for every $\theta\in\setB_{\Btheta}^{d_{\theta}}\setminus\{0\}$, there exists $u:=\frac{1}{\norm{\theta}}\theta\in\setS^{d_{\theta}-1}$ (in the trivial case where $\theta=0$ we can set $u=0$). Then, by definition of $\calN_{\e'}$, there exists $u_i\in\calN_{\e'}$ such that $\norm{u-u_i}\leq\e'$. Hence, setting $\theta_i=\norm{\theta} u_i\in\setB_{\Btheta}^{d_{\theta}}$, we obtain:
\begin{align}\label{lemma:M_covering_number-2}
    \norm{\theta-\theta_i}=\norm{\theta}\norm{u-u_i}\leq\Btheta\e'.
\end{align}
For any $\e\in(0,\Btheta]$, we can set $\e'=\e/\Btheta$ in \eqref{lemma:M_covering_number-1} and \eqref{lemma:M_covering_number-2}, thus completing the proof of \eqref{M_covering_number}.
\end{proof}

\begin{lemma}\label{lemma:G_covering_number}
Consider the predictor model \eqref{model} and the parameter class $\sfM$ under Assumption~\ref{assumption:regression_function}. Moreover, let $\calG$ be as in \eqref{class_G}. Then, for any $\e\in(0,L_1\Btheta]$, the covering numbers of $\calG$ satisfy:
\begin{equation}\label{G_covering_number}
\calN(\e,\calG,\norm{\cdot}_{\infty})\leq\pars{\frac{3L_1\Btheta}{\e}}^{d_{\theta}}.
\end{equation}
\end{lemma}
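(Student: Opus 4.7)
The plan is to reduce covering $\calG$ with respect to $\norm{\cdot}_\infty$ to covering the parameter class $\sfM$ with respect to $\norm{\cdot}$, by showing that the map $\theta \mapsto g_\theta$ is Lipschitz, and then invoking \Cref{lemma:M_covering_number}. The key mechanism is the gradient bound in \Cref{assumption:regression_function}, which implies that each $f_t(x,\cdot)$ is $L_1$-Lipschitz on $\sfM$ uniformly in $x$.

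First I would unpack the norm $\norm{\cdot}_\infty$ on $\calG$: for any two parameters $\theta,\theta'\in\sfM$ and any $(x_0,\ldots,x_{T-1})\in\sfX^T$, the difference $g_\theta(x_0,\ldots,x_{T-1})-g_{\theta'}(x_0,\ldots,x_{T-1})$ equals $\frac{1}{\sqrt{T}}\bigl(f_0(x_0,\theta)-f_0(x_0,\theta'),\ldots,f_{T-1}(x_{T-1},\theta)-f_{T-1}(x_{T-1},\theta')\bigr)$ since the reference terms $f_t(x_t,\theta_\star)$ cancel. Taking the Euclidean norm on $\setR^T$ yields
\begin{equation*}
\norm{g_\theta(x_0,\ldots,x_{T-1})-g_{\theta'}(x_0,\ldots,x_{T-1})}=\sqrt{\tfrac{1}{T}\textstyle\sum_{t=0}^{T-1}(f_t(x_t,\theta)-f_t(x_t,\theta'))^2}.
\end{equation*}
Next I would apply the mean value theorem together with the gradient bound $\norm{\gradtheta f_t(x,\cdot)}\leq L_1$ from \Cref{assumption:regression_function} to each coordinate, obtaining $\abs{f_t(x_t,\theta)-f_t(x_t,\theta')}\leq L_1\norm{\theta-\theta'}$ uniformly in $t$ and $x_t$. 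Substituting back collapses the averaged sum, giving $\norm{g_\theta-g_{\theta'}}_\infty\leq L_1\norm{\theta-\theta'}$ after taking the supremum over $(x_0,\ldots,x_{T-1})$.

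Finally, I would transfer covers: given any $(\e/L_1)$-net $\calN$ of $\sfM$ with respect to $\norm{\cdot}$, the image $\{g_\theta:\theta\in\calN\}$ is an $\e$-net of $\calG$ with respect to $\norm{\cdot}_\infty$, by the Lipschitz inequality above. The hypothesis $\e\in(0,L_1\Btheta]$ ensures $\e/L_1\in(0,\Btheta]$, so \Cref{lemma:M_covering_number} applies and gives
\begin{equation*}
\calN(\e,\calG,\norm{\cdot}_\infty)\leq\calN(\e/L_1,\sfM,\norm{\cdot})\leq\pars{\frac{3\Btheta L_1}{\e}}^{d_\theta}.
\end{equation*}

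This argument is almost entirely routine; the only nontrivial choice is interpreting the inner norm on the $\setR^T$-valued outputs as Euclidean, which is what makes the averaging over $t$ produce the clean $L_1$ Lipschitz constant (rather than $\sqrt{T}L_1$ or similar). I do not anticipate any real obstacle — the lemma is essentially a packaging of the parameter-space cover through a uniformly Lipschitz parametrization.
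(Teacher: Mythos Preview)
Your proposal is correct and essentially identical to the paper's proof: both establish the $L_1$-Lipschitz bound $\norm{g_\theta-g_{\theta'}}_\infty\leq L_1\norm{\theta-\theta'}$ via the gradient bound in Assumption~\ref{assumption:regression_function}, then pull back an $(\e/L_1)$-net of $\sfM$ from Lemma~\ref{lemma:M_covering_number}. Your reading of $\norm{\cdot}_\infty$ as the sup over inputs of the Euclidean norm on $\setR^T$ matches the paper's computation exactly.
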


\begin{proof}
Fix any $\e'\in(0,\Btheta]$ and let $\calN_{\e'}$ be an $\e'$-net of $\sfM$ with respect to the norm $\norm{\cdot}$. From Lemma~\ref{lemma:M_covering_number} we deduce that:
\begin{equation}\label{lemma:G_covering_number-1}
|\calN_{\e'}|\leq\pars{\frac{3\Btheta}{\e'}}^{d_{\theta}}.   
\end{equation}
By definition of $\calN_{\e'}$, for every $\theta\in\sfM$ there exists $\theta_i\in\calN_{\e'}$ such that $\norm{\theta-\theta_i}\leq\e'$. Let $g_{\theta}$ and $g_{\theta_i}$ denote the functions in $\calG$ corresponding to $\theta$ and $\theta_i$, respectively. Employing Assumption~\ref{assumption:regression_function}, we can write:
\begin{align}\label{lemma:G_covering_number-3}
\norm{g_{\theta}-g_{\theta_i}}_{\infty}&=\sup_{x_0,\ldots,x_{T-1}\in\sfX}\sqrt{\sumt\pars{\frac{1}{\sqrt{T}}(g_t(x_t,\theta)-g_t(x_t,\theta_i))}^2}\nonumber\\
&=\sup_{x_0,\ldots,x_{T-1}\in\sfX}\sqrt{\frac{1}{T}\sumt (f_t(x_t,\theta)-f_t(x_t,\theta_i))^2}\nonumber\\
&\leq \sqrt{\frac{1}{T}\sumt L_1^2\norm{\theta-\theta_i}^2}=L_1\norm{\theta-\theta_i}\leq L_1\e'.
\end{align}
We conclude that for every $g_{\theta}\in\calG$, there exists $g_{\theta_i}\in\calG$ such that \eqref{lemma:G_covering_number-3} holds. Hence, setting $\e'=\e/L_1$, from \eqref{lemma:G_covering_number-1} we  deduce that \eqref{G_covering_number} holds for any $\e\in(0,L_1\Btheta]$, thus completing the proof.
\end{proof}

\section{Proof of Theorem~\ref{theorem:theorem2}}\label{Proof_of_Theorem_2}

We start by presenting the following lemma, which combines a modified version of \cite[Theorem 5.1]{Ziemann2022} and \cite[Theorem 2]{Samson2000} for \textit{time-varying} parametric functions $g_t(\cdot,\theta)$. Although herein we focus on parametric functions, note that the following result directly extends to nonparametric functions $g_t(\cdot)$.

\begin{lemma}\label{lemma:analogue_of_Th.5.1}
For each $t$, let $g_t:\sfX\times\sfM\to\setR$ be such that $0\leq g_t(x,\theta)\leq C$, for all $(x,\theta)\in\sfX\times\sfM$, for some $C>0$. Then, for any $\lambda>0$ and $\theta\in\sfM$ we have:
\begin{align}\label{inequality:analogue_of_Th.5.1}
    &\E\exp\pars{-\lambda\sumt g_t(X_t,\theta)}\leq\exp\pars{-\lambda\sumt \E g_t(X_t,\theta)+\frac{\lambda^2\normGamma^2}{2}\sumt\limits\E g_t^2(X_t,\theta)}.
\end{align}
Moreover, for any $s>0$ and $\theta\in\sfM$ we have:
\begin{align}\label{inequality:analogue-of-Samson-Th.2-1}
&\P\pars{\sumt g_t(X_t,\theta)\geq\sumt\E g_t(X_t,\theta)+s}\leq\exp\pars{-\frac{s^2}{2C\normGamma^2(\sumt\E g_t(X_t,\theta)+s)}}
\end{align}
and:
\begin{align}\label{inequality:analogue-of-Samson-Th.2-2}
&\P\pars{\sumt g_t(X_t,\theta)\leq\sumt\E g_t(X_t,\theta)-s}\leq\exp\pars{-\frac{s^2}{2C\normGamma^2\sumt\E g_t(X_t,\theta)}}.
\end{align}

\end{lemma}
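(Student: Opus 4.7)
\textbf{Proof Plan for Lemma~\ref{lemma:analogue_of_Th.5.1}.} The plan is to first establish the moment generating function (MGF) bound \eqref{inequality:analogue_of_Th.5.1} by carefully inspecting the proof of \cite[Theorem 5.1]{Ziemann2022} (which is itself built on Samson's entropy method) and noting that it only uses coordinate-wise properties of the function of interest, so time-varying summands are accommodated. I would then derive the two Bernstein-type tail bounds \eqref{inequality:analogue-of-Samson-Th.2-1} and \eqref{inequality:analogue-of-Samson-Th.2-2} by a Chernoff argument applied to the MGF bound (after suitable centering).

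For Step~1, fix $\theta\in\sfM$ and define $F(x_{0:T-1}):=\sum_{t=0}^{T-1}g_t(x_t,\theta)$ on the product space $\sfX^T$. The key observation is that the log-Sobolev--type inequality of Samson for dependent random variables, as used in \cite[Theorem 5.1]{Ziemann2022}, is sensitive only to coordinate-wise discrete derivatives $\partial_t F$ of $F$, weighted through $\Gamma_{\textup{dep}}(\sfP_{\sfX})$. Since $0\leq g_t\leq C$ for every $t$, these coordinate-wise derivatives are bounded by $C$ in absolute value, independently of the time-varying nature of the $g_t$'s. Tracking the constants in the Herbst-type argument (which is variance-based rather than purely Lipschitz-based) yields a bound with $\sum_t\E g_t^2(X_t,\theta)$ rather than $TC^2$ in the exponent, which is exactly \eqref{inequality:analogue_of_Th.5.1}. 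The main subtlety here is ensuring that the entropy recursion closes with the variance proxy $\sum_t\E g_t^2$; this requires the standard trick of bounding $\int_0^\lambda s^{-2}\log\E\exp(-sF)\,\mathrm{d}s$ and invoking the self-bounding property $g_t^2\leq Cg_t$.

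For Step~2, the lower tail \eqref{inequality:analogue-of-Samson-Th.2-2} follows directly from the MGF bound of Step~1 via Markov's inequality: for any $\lambda>0$,
\begin{equation*}
\P\!\left(\sum_t g_t(X_t,\theta)\leq\sum_t\E g_t(X_t,\theta)-s\right)\leq e^{-\lambda s}\exp\!\left(\tfrac{\lambda^2\normGamma^2}{2}\sum_t\E g_t^2(X_t,\theta)\right),
\end{equation*}
after which I would use $\E g_t^2\leq C\,\E g_t$ and optimize the resulting quadratic-in-$\lambda$ exponent to obtain Bernstein's form. For the upper tail \eqref{inequality:analogue-of-Samson-Th.2-1}, the same Chernoff approach is needed but with the positive-sign MGF. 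I would obtain this MGF bound by repeating Step~1 applied to the centered functions $g_t-\E g_t$, or more cleanly by invoking a one-sided variant of Samson's inequality (as in \cite[Theorem~2]{Samson2000}) that directly gives an upper-tail Bernstein bound with the $(\sum_t\E g_t+s)$ denominator; the extra $s$ in the denominator (as opposed to just $\sum_t\E g_t$) comes from the standard Bennett-to-Bernstein simplification after optimizing $\lambda$.

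The main technical obstacle is verifying that the coordinate-wise entropy argument in \cite{Samson2000,Ziemann2022} truly does not require time-homogeneity of the summands and that the appropriate variance proxy survives the optimization. Once this is confirmed, the tail bounds are routine Chernoff calculations; the extension from the time-invariant case is therefore more bookkeeping than new mathematics, but it is essential for our ARMA application, where the conditional-expectation predictors $f_t(\cdot,\theta)$ are inherently time-varying.
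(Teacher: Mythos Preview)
Your plan is correct in spirit and would work, but the paper takes a more economical route. Rather than re-inspecting the internals of Samson's entropy/Herbst recursion to confirm it is insensitive to time-variation, the paper simply embeds the time-varying sum directly into Samson's original framework: it introduces indicator weights $\alpha_{kt}=\mathds{1}_{k=t}$ and the auxiliary class $\calF=\{g_0(\cdot,\theta),\dots,g_{T-1}(\cdot,\theta)\}$, so that $\sum_t g_t(x_t,\theta)=\sum_{k,t}\alpha_{kt}g_k(x_t,\theta)$ with $\sum_k\alpha_{kt}=1$. This is exactly the weighted-empirical-process form that \cite[Theorem~2]{Samson2000} handles, and after verifying the one-sided coordinate-wise increment bounds $f_T(x')-f_T(x)\leq\sum_t\alpha_t^{\T}g(x_t',\theta)\mathds{1}_{x_t\neq x_t'}$ (and the analogous bound for $-f_T$), all three inequalities \eqref{inequality:analogue_of_Th.5.1}, \eqref{inequality:analogue-of-Samson-Th.2-1}, \eqref{inequality:analogue-of-Samson-Th.2-2} are read off verbatim from Samson's (3.12), (3.1), (3.2).

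The practical difference: your approach re-derives the MGF bound and then obtains the tail bounds via a separate Chernoff optimization (which forces you to handle the upper tail by a second, positive-sign MGF argument that you flag as a subtlety). The paper's reduction avoids all of this---once the $\alpha_{kt}$ representation is in place, both tails and the MGF bound come for free from Samson, with no new optimization needed. Your route is more self-contained; the paper's is shorter and sidesteps the upper-tail MGF issue entirely.
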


\begin{proof}
Fix any $\theta\in\sfM$ and define the finite function class:
\begin{align*}
    \calF=\{g_0(\cdot,\theta),\ldots,g_{T-1}(\cdot,\theta)\}.
\end{align*}
Moreover, let:
\begin{equation*}
    f_T(x_0,\ldots,x_{T-1})=\sumt g_t(x_t,\theta),
\end{equation*}
for all $x_0,\ldots,x_{T-1}\in\sfX$, and:
\begin{equation*}
    \alpha_{kt}=\left\{
            \begin{array}{lcr}
               1, \text{ if } k=t  \\
               0, \text{ else}
            \end{array}
            \right.,
\end{equation*}
for all $k,t\in\{0,\ldots,T-1\}$. Notice that:
\begin{equation*}
    f_T(x_0,\ldots,x_{T-1})=\sum_{k=0}^{T-1}\sumt \alpha_{kt}g_k(x_t,\theta),
\end{equation*}
for all $x_0,\ldots,x_{T-1}\in\sfX$, and:
\begin{equation*}
    \sum_{k=0}^{T-1}\alpha_{kt}=1,
\end{equation*}
for all $t=0,\ldots,T-1$. 
Fix any $x_0,\ldots,x_{T-1},x_0',\ldots,x_{T-1}'\in\sfX$.  Set $x_{0:T-1}=(x_0,\ldots,x_{T-1})$ and $x_{0:T-1}'=(x_0',\ldots,x_{T-1}')$. Furthermore, for each $t$, let $\alpha_t=(\alpha_{0t},\ldots,\alpha_{(T-1)t})$, $g(x_t,\theta)=(g_0(x_t,\theta),\ldots,$ $g_{T-1}(x_t,\theta))$, and $g(x_t',\theta)=$ $(g_0(x_t',\theta),\ldots,g_{T-1}(x_t',\theta))$. Then, we have:
\begin{align*}
    f_T(x_{0:T-1}')-f_T(x_{0:T-1})&=\sum_{k=0}^{T-1}\sumt \alpha_{kt}(g_k(x_t',\theta)-g_k(x_t,\theta))\\
    &=\sumt \alpha_t^{\T}(g(x_t',\theta)-g(x_t,\theta))\\
    &=\sumt \alpha_t^{\T}(g(x_t',\theta)-g(x_t,\theta))\mathds{1}_{x_t\neq x_t'}\\
    &\leq\sumt \alpha_t^{\T}g(x_t',\theta)\mathds{1}_{x_t\neq x_t'},
\end{align*}
where the last step follows from the fact that the functions $g_t(\cdot,\theta)$ and the coefficients $\alpha_{kt}$ are non-negative. Setting $\tilde{f}_T=-f_T$, we can similarly show that:
\begin{align*}
    \tilde{f}_T(x_{0:T-1}')-\tilde{f}_T(x_{0:T-1})\leq \sumt \alpha_t^{\T}g(x_t,\theta)\mathds{1}_{x_t\neq x_t'}.
\end{align*}
From this point onwards, the proof of \eqref{inequality:analogue_of_Th.5.1}, \eqref{inequality:analogue-of-Samson-Th.2-1}, and \eqref{inequality:analogue-of-Samson-Th.2-2} is the same as that of (3.12), (3.1), and (3.2), respectively, from \cite[Theorem 2]{Samson2000} for $Z=f_T(X_0,\ldots, X_{T-1})$, and thus is omitted.
\end{proof}

Employing \eqref{inequality:analogue_of_Th.5.1} from Lemma~\ref{lemma:analogue_of_Th.5.1}, we derive a lower isometry result that extends \cite[Theorem 5.2]{Ziemann2022} to \textit{time-varying} parametric regression functions $f_t(\cdot,\theta)$.

\begin{lemma}\label{lemma:analogue-of-Th.5.2}
Fix any $r\in(0,\sqrt{8}L_1\Btheta]$. Consider the predictor model \eqref{model} and the parameter class $\sfM$ under Assumptions~\cref{assumption:input_dependency,assumption:regression_function,assumption:expansivity}
. For each $t$, define $g_t(x,\theta)=f_t(x,\theta)-f_t(x,\theta_{\star})$, for all $(x,\theta)\in\sfX\times\sfM$. Moreover, let $\calB_r=\left\{\theta\in\sfM\,\big|\,\frac{1}{T}\sumt\E g_t^2(X_t,\theta)\leq r^2\right\}$. Then, we have:
\begin{align}\label{inequality:analogue-of-Th.5.2}
&\Prob\pars{\inf_{\theta\in\sfM\setminus \calB_r}\pars{\frac{1}{T}\sumt g_t^2(X_t,\theta)-\frac{1}{8T}\sumt \E g_t^2(X_t,\theta)}\leq0}\leq\pars{\frac{8\sqrt{8}L_1\Btheta}{r}}^{2d_{\theta}}\exp\pars{-\frac{T^{1-b_2}}{8b_1L_1^4a^2}}.
\end{align}
\end{lemma}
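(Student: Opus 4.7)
The argument is an $\varepsilon$-net (discretization) of $\sfM$ combined with the left-tail concentration inequality \eqref{inequality:analogue_of_Th.5.1} in Lemma~\ref{lemma:analogue_of_Th.5.1}, applied to the \emph{squared} increments $g_t^2(X_t,\theta)$ for each fixed $\theta$. Fix $\theta\in\sfM\setminus\calB_r$ and write $S:=\sumt\E g_t^2(X_t,\theta)\geq Tr^2$ (by definition of $\calB_r$). The crucial \emph{self-normalizing} observation is that combining $|g_t(x,\theta)|\leq L_1\norm{\theta-\theta_\star}$ (from Assumption~\ref{assumption:regression_function}) with the identifiability bound $\norm{\theta-\theta_\star}^2\leq a\,S/T$ (from Assumption~\ref{assumption:expansivity}) yields
\begin{equation*}
\sup_{x\in\sfX} g_t^2(x,\theta)\leq L_1^2 a\cdot S/T\qquad\text{and}\qquad \sumt\E g_t^4(X_t,\theta)\leq L_1^4 a^2\cdot S^2/T.
\end{equation*}

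\textbf{Chernoff bound for fixed $\theta$.} Applying \eqref{inequality:analogue_of_Th.5.1} to $h_t:=g_t^2(\cdot,\theta)$ (non-negative and bounded by the first display), then using Markov's inequality at level $S/8$ and optimizing the parameter $\lambda>0$, the self-normalizing bound on $\sumt\E g_t^4$ cancels one factor of $S$ and produces a per-point bound
\begin{equation*}
\P\!\left(\tfrac{1}{T}\sumt g_t^2(X_t,\theta)\leq \tfrac{S}{8T}\right)\leq \exp\!\left(-\frac{49\,T}{128\,\normGamma^2\, L_1^4 a^2}\right)
\end{equation*}
that is \emph{independent of} $\theta$ and of $r$. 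Invoking $\normGamma^2\leq b_1 T^{b_2}$ from Assumption~\ref{assumption:input_dependency} converts the exponent into $\exp\bigl(-c\,T^{1-b_2}/(b_1 L_1^4 a^2)\bigr)$, matching the exponent claimed in the lemma up to a universal constant.

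\textbf{Discretization and union bound.} To obtain uniform control over $\sfM\setminus\calB_r$, cover $\sfM$ by an $\varepsilon$-net $\calN_\varepsilon$ at resolution $\varepsilon\asymp r^2/(L_1^2\Btheta)$. The identity $g_t^2(\theta)-g_t^2(\theta')=(g_t(\theta)+g_t(\theta'))(g_t(\theta)-g_t(\theta'))$, together with $|g_t|\leq 2L_1\Btheta$ and the Lipschitz bound in Assumption~\ref{assumption:regression_function}, yields
\begin{equation*}
|g_t^2(x,\theta)-g_t^2(x,\theta')|\leq 4L_1^2\Btheta\cdot\norm{\theta-\theta'}\leq 4L_1^2\Btheta\cdot\varepsilon,
\end{equation*}
so both $\tfrac{1}{T}\sumt g_t^2(X_t,\cdot)$ and its expectation are perturbed by at most an $O(r^2)$ quantity across the net; proving the net-level statement with gap constant $1/4$ and then degrading to $1/8$ absorbs this perturbation. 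By Lemma~\ref{lemma:M_covering_number}, $|\calN_\varepsilon|\leq(3\Btheta/\varepsilon)^{d_\theta}$, which for the chosen $\varepsilon$ becomes $\bigl((cL_1\Btheta/r)^2\bigr)^{d_\theta}=(cL_1\Btheta/r)^{2d_\theta}$, reproducing the prefactor in the claim. The hypothesis $r\leq \sqrt{8}L_1\Btheta$ guarantees $\varepsilon\leq\Btheta$, which is the admissible range for the covering bound. A union bound over $\calN_\varepsilon$ closes the argument.

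\textbf{Main obstacle.} The delicate point is the self-normalizing step: relying only on the crude boundedness $g_t^2\leq 4L_1^2\Btheta^2$ in Lemma~\ref{lemma:analogue_of_Th.5.1} would produce a per-point exponent proportional to $r^2$ that cannot be paired with the $(L_1\Btheta/r)^{2d_\theta}$ covering prefactor to yield an $r$-free bound. Invoking Assumption~\ref{assumption:expansivity} to replace $\norm{\theta-\theta_\star}^2$ by $a\,S/T$ is precisely what separates the $r$-dependence (prefactor only) from the $T$-dependence (exponent only) and produces the $L_1^4 a^2$ factor in the stated rate.
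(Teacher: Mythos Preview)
Your proposal is correct and actually takes a somewhat simpler route than the paper. The paper does not cover $\sfM$ directly; instead it passes to the star-hull $\calG_\star$ of the centered function class $\calG$ (see \eqref{class_G}), restricts attention to the ``sphere'' $\partial\calG_{\star,r}=\{\bar g_\theta:\tfrac{1}{T}\sumt\E\bar g_t^2=r^2\}$, takes an $r/\sqrt{8}$-net of this boundary in the $\norm{\cdot}_\infty$ norm (controlling its cardinality via Mendelson's star-hull covering lemma and the bound on $\calN(\cdot,\calG,\norm{\cdot}_\infty)$ in Lemma~\ref{lemma:G_covering_number}), uses the parallelogram inequality \eqref{parallelogram} to pass from net points to all of $\partial\calG_{\star,r}$, and finally extends to $\sfM\setminus\calB_r$ by the rescaling trick $\bar g_\theta\mapsto (r/r')\bar g_\theta$ enabled by star-shapedness. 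Your argument instead exploits the parametric structure head-on: a Euclidean $\varepsilon$-net on $\sfM$ at scale $\varepsilon\asymp r^2/(L_1^2\Btheta)$, combined with the crude Lipschitz bound $|g_t^2(\theta)-g_t^2(\theta')|\leq 4L_1^2\Btheta\norm{\theta-\theta'}$, suffices because the additive perturbation $O(r^2)$ is always dominated by the multiplicative gap $S(\theta)/(8T)\geq r^2/8$ on $\sfM\setminus\calB_r$. Both arguments hinge on the same self-normalizing observation you correctly flag as the main obstacle---Assumption~\ref{assumption:expansivity} makes the Chernoff exponent $\theta$-free---and both land on a prefactor of the form $(cL_1\Btheta/r)^{2d_\theta}$ times a $\theta$-free exponential; your constants are in fact slightly sharper. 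The paper's star-hull route is the one that generalizes to nonparametric function classes (it is inherited from the time-invariant analysis in \cite{Ziemann2022}), whereas your direct parameter-space covering is shorter here precisely because the class is finite-dimensional with an explicit global Lipschitz bound. One minor inconsistency: you compute the per-point bound at level $S/8$ (yielding exponent $49/128$) but then say the net is handled at gap $1/4$; the intended flow is the latter---Chernoff at $1/4$ on net points, then degrade to $1/8$ via the perturbation---and the $49/128$ display is superfluous.
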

\begin{proof}
Fix any $\theta\in\sfM$. Employing Assumptions~\ref{assumption:regression_function} and \ref{assumption:expansivity}, we can write:
\begin{align}\label{lemma:analogue-of-Th.5.2-1}
    &\frac{1}{T}\sumt\E g_t^4(X_t,\theta)=\frac{1}{T}\sumt \E(f_t(X_t,\theta)-f_t(X_t,\theta_{\star}))^4\leq L_1^4\norm{\theta-\theta_{\star}}^4\leq L_1^4a^2\pars{\frac{1}{T}\sumt \E g_t^2(X_t,\theta)}^2.
\end{align}
Employing \eqref{inequality:analogue_of_Th.5.1} from Lemma~\ref{lemma:analogue_of_Th.5.1} and \eqref{lemma:analogue-of-Th.5.2-1}, we can show that:
\begin{align}\label{lemma:analogue-of-Th.5.2-2}
    &\P\pars{\frac{1}{T}\sumt g_t^2(X_t,\theta)\leq\frac{1}{2T}\sumt\E g_t^2(X_t,\theta)}\leq \exp\pars{-\frac{T}{8L_1^4a^2\normGamma^2}},
\end{align}
similarly to the proof of \cite[Proposition 5.1]{Ziemann2022}. Let $\calG$ be as in \eqref{class_G}
and consider its star-hull:
\begin{align*}
    \starG:=\bigg\{&\thickbar{g}_{\theta}:(x_0,\ldots,x_{T-1})\mapsto \frac{1}{\sqrt{T}}(\thickbar{g}_0(x_0,\theta),\ldots,\thickbar{g}_{T-1}(x_{T-1},\theta))\,\Big|\,\nonumber\\
    & \thickbar{g}_t(x_t,\theta)=\alpha g_t(x_t,\theta),x_t\in\sfX, \forall t, \alpha\in[0,1], \theta\in\sfM\bigg\}.
\end{align*}
Let $\calG_{\star,r}=\Big\{\thickbar{g}_{\theta}\in\starG\,\big|\,\frac{1}{T}\sumt\E \thickbar{g}_t^2(X_t,\theta)\leq r^2\Big\}$ and let $\calN_{r}$ denote a $r/\sqrt{8}$-net of $\partial\calG_{\star,r}:=\left\{\thickbar{g}_{\theta}\in\starG\,\big|\,\frac{1}{T}\sumt\E \thickbar{g}_t^2(X_t,\theta)= r^2\right\}$ with respect to the norm $\norm{\cdot}_{\infty}$. For every $\thickbar{g}_{\theta}\in\starG$, there exist $\alpha\in[0,1]$ and $g_{\theta}\in\calG$ such that $\thickbar{g}_{\theta}=\alpha g_{\theta}$. Hence, employing Assumption~\ref{assumption:regression_function}, we can write:
\begin{align}\label{lemma:analogue-of-Th.5.2-2.5}
\norm{\thickbar{g}_{\theta}}_{\infty}&=\sup_{x_0,\ldots,x_{T-1}\in\sfX}\sqrt{\sumt\pars{\frac{1}{\sqrt{T}}\alpha g_t(x_t,\theta)}^2}\nonumber\\
&\leq\sup_{x_0,\ldots,x_{T-1}\in\sfX}\sqrt{\frac{1}{T}\sumt (f_t(x_t,\theta)-f_t(x_t,\theta_{\star}))^2}\nonumber\\
&\leq \sqrt{\frac{1}{T}\sumt L_1^2\norm{\theta-\theta_{\star}}^2}=L_1\norm{\theta-\theta_{\star}}\leq 2L_1\Btheta.
\end{align}
Moreover, we have:
\begin{align}\label{lemma:analogue-of-Th.5.2-2.25}
    \P\pars{\frac{1}{T}\sumt \thickbar{g}_t^2(X_t,\theta)\leq\frac{1}{2T}\sumt\E \thickbar{g}_t^2(X_t,\theta)}&=\P\pars{\frac{1}{T}\sumt \alpha^2 g_t^2(X_t,\theta)\leq\frac{1}{2T}\sumt\E \bracks{\alpha^2g_t^2(X_t,\theta)}}\nonumber\\
    &=\P\pars{\frac{1}{T}\sumt  g_t^2(X_t,\theta)\leq\frac{1}{2T}\sumt\E g_t^2(X_t,\theta)}\nonumber\\
    &\leq \exp\pars{-\frac{T}{8L_1^4a^2\normGamma^2}},
\end{align} 
where the last step follows from \eqref{lemma:analogue-of-Th.5.2-2}. From \eqref{lemma:analogue-of-Th.5.2-2.5},  \cite[Lemma 4.5]{Mendelson2002} and Lemma~\ref{lemma:G_covering_number} we have:
\begin{align}\label{lemma:analogue-of-Th.5.2-3}
    \abs{\calN_r}&=\calN\pars{\frac{r}{\sqrt{8}},\partial\calG_{\star,r},\norm{\cdot}_{\infty}}\nonumber\\
    &\leq\calN\pars{\frac{r}{\sqrt{8}},\starG,\norm{\cdot}_{\infty}}\hspace{3.5cm}(\partial\calG_{\star,r}\subseteq\starG)\nonumber\\
    &\leq \frac{4\sqrt{8}\sup_{\theta\in\sfM}\norm{\thickbar{g}_{\theta}}}{r}\calN\pars{\frac{r}{2\sqrt{8}},\calG,\norm{\cdot}_{\infty}}\hspace{0.6cm}(\text{from \cite[Lemma 4.5]{Mendelson2002}})\nonumber\\
    &\leq \frac{8\sqrt{8}L_1\Btheta}{r}\calN\pars{\frac{r}{2\sqrt{8}},\calG,\norm{\cdot}_{\infty}}\hspace{1.8cm}(\text{from \eqref{lemma:analogue-of-Th.5.2-2.5}})\nonumber\\
    &\leq \frac{8\sqrt{8}L_1\Btheta}{r}\pars{\frac{6\sqrt{8}L_1\Btheta}{r}}^{d_{\theta}}\hspace{2.5cm}(\text{from \eqref{G_covering_number}})\nonumber\\
    &\leq \pars{\frac{8\sqrt{8}L_1\Btheta}{r}}^{2d_{\theta}}.\hspace{4.1cm}(\dtheta\in\setN_+)
\end{align}
Note that every element of $\calN_r$ is of the form  $\alpha_ig_{\theta_i}$, where $\alpha_i\in[0,1]$ and $g_{\theta_i}\in\calG$. Define the event:
\begin{align*}  \calE=\bigcup_{\alpha_ig_{\theta_i}\in\calN_r}\left\{\frac{1}{T}\sumt \alpha_i^2g_t^2(X_t,\theta_i)\leq\frac{1}{2T}\sumt\E\bracks{\alpha_i^2g_t^2(X_t,\theta_i)}\right\}
\end{align*}
and observe, by a union bound, that:
\begin{align}\label{lemma:analogue-of-Th.5.2-4}
    \P(\calE)&\leq \sum_{\alpha_ig_{\theta_i}\in\calN_r}\P\pars{\frac{1}{T}\sumt \alpha_i^2g_t^2(X_t,\theta_i)\leq\frac{1}{2T}\sumt\E \bracks{\alpha_i^2g_t^2(X_t,\theta_i)}}\nonumber\\   &\leq\abs{\calN_r}\max_{\alpha_ig_{\theta_i}\in\calN_r}\P\pars{\frac{1}{T}\sumt \alpha_i^2g_t^2(X_t,\theta_i)\leq\frac{1}{2T}\sumt\E \bracks{\alpha_i^2g_t^2(X_t,\theta_i)}}.
\end{align}
Combining \eqref{lemma:analogue-of-Th.5.2-4} with \eqref{lemma:analogue-of-Th.5.2-2.25} and \eqref{lemma:analogue-of-Th.5.2-3}, and invoking Assumption~\ref{assumption:input_dependency}, we obtain:
\begin{align}\label{lemma:analogue-of-Th.5.2-5}
    \P(\calE)\leq \pars{\frac{8\sqrt{8}L_1\Btheta}{r}}^{2d_{\theta}}\exp\pars{-\frac{T^{1-b_2}}{8b_1L_1^4a^2}}.
\end{align}
Fix any $\thickbar{g}_{\theta}\in\partial\calG_{\star,r}$. By definition of $\calN_r$, there exists $\alpha_ig_{\theta_i}\in\calN_r$ such that $\norm{\thickbar{g}_{\theta}-\alpha_ig_{\theta_i}}_{\infty}\leq r/\sqrt{8}$. Hence, by Lemma~\ref{lemma:parallelogram}, we have:
\begin{align}\label{lemma:analogue-of-Th.5.2-6}
    \frac{1}{T}\sumt \thickbar{g}_t^2(X_t,\theta)&\geq\frac{1}{2T}\sumt \alpha_i^2g_t^2(X_t,\theta_i)-\frac{1}{T}\sumt(\thickbar{g}_t(X_t,\theta)-\alpha_i g_t(X_t,\theta_i))^2\nonumber\\
    &\geq\frac{1}{2T}\sumt \alpha_i^2g_t^2(X_t,\theta_i)-\norm{\thickbar{g}_{\theta}-\alpha_ig_{\theta_i}}_{\infty}^2\nonumber\\
    &\geq\frac{1}{2T}\sumt \alpha_i^2g_t^2(X_t,\theta_i)-\frac{r^2}{8}.
\end{align}
On the complement of $\calE$, \eqref{lemma:analogue-of-Th.5.2-6} yields:
\begin{align}\label{lemma:analogue-of-Th.5.2-7}
    &\frac{1}{T}\sumt \thickbar{g}_t^2(X_t,\theta)\geq\frac{1}{4T}\sumt\E\bracks{\alpha_i^2g_t^2(X_t,\theta_i)}-\frac{r^2}{8}=\frac{r^2}{4}-\frac{r^2}{8}=\frac{r^2}{8},
\end{align}
where the second-to-last step follows from the fact that $\alpha_ig_{\theta_i}\in\calN_r\subseteq\partial\calG_{\star,r}$. From \eqref{lemma:analogue-of-Th.5.2-5} and \eqref{lemma:analogue-of-Th.5.2-7} we conclude that:
\begin{align}\label{lemma:analogue-of-Th.5.2-8}
    &\P\pars{\inf_{\thickbar{g}_{\theta}\in\partial\calG_{\star,r}}\pars{\frac{1}{T}\sumt \thickbar{g}_t^2(X_t,\theta)-\frac{r^2}{8}}\leq0}\leq \pars{\frac{8\sqrt{8}L_1\Btheta}{r}}^{2d_{\theta}}\exp\pars{-\frac{T^{1-b_2}}{8b_1L_1^4a^2}}.
\end{align}
For every $\thickbar{g}_{\theta}\in\thickbar{\calG}_{\star,r}:=\starG\setminus\calG_{\star,r}$, there exists $r'>r$ such that:
\begin{align}\label{lemma:analogue-of-Th.5.2-8.5}
    &\frac{1}{T}\sumt \E \thickbar{g}_t^2(X_t,\theta)=r'^2 \implies\frac{1}{T}\sumt \E \pars{\frac{r}{r'}\thickbar{g}_t(X_t,\theta)}^2=r^2.
\end{align}
Since $\frac{r}{r'}<1$, by definition of $\starG$, we have $\frac{r}{r'}\thickbar{g}_{\theta}\in\starG$, and from \eqref{lemma:analogue-of-Th.5.2-8.5} we deduce that $\frac{r}{r'}\thickbar{g}_{\theta}\in\partial\calG_{\star,r}$.
Therefore, from \eqref{lemma:analogue-of-Th.5.2-8} we obtain:
\begin{align}\label{lemma:analogue-of-Th.5.2-9}
&\Prob\pars{\inf_{\thickbar{g}_{\theta}\in\thickbar{\calG}_{\star,r}}\pars{\frac{1}{T}\sumt \thickbar{g}_t^2(X_t,\theta)-\frac{1}{8T}\sumt \E \thickbar{g}_t^2(X_t,\theta)}\leq0}\leq\pars{\frac{8\sqrt{8}L_1\Btheta}{r}}^{2d_{\theta}}\exp\pars{-\frac{T^{1-b_2}}{8b_1L_1^4a^2}}.
\end{align}
Given that $\Big\{g_{\theta}\in\calG\,\big|\, \frac{1}{T}\sumt\E g_t^2(X_t,\theta)> r^2\Big\}\subseteq\thickbar{\calG}_{\star,r}$, \eqref{lemma:analogue-of-Th.5.2-9} implies \eqref{inequality:analogue-of-Th.5.2}, thus completing the proof.
\end{proof}

Next, we employ Lemma~\ref{lemma:analogue-of-Th.5.2} to prove the following result, which is a modified version of \cite[Theorem 4.1]{Ziemann2022} for \textit{time-varying} parametric regression functions $f_t(\cdot,\theta)$.

\begin{lemma}\label{lemma:analogue-of-Th.4.1}
Consider the predictor model \eqref{model} and the parameter class $\sfM$ under Assumptions~\cref{assumption:input_dependency,assumption:regression_function,assumption:expansivity}.
Fix any $r\in(0,\sqrt{8}L_1\Btheta]$ and let $\hattheta$ satisfy \eqref{theta_hat}. Then, we have:
\begin{align}\label{inequality:analogue-of-Th.4.1}
    &\E_{\barX_{0:T-1}}\bracks{\frac{1}{T}\sumt(f_t(\barX_t,\hattheta)-f_t(\barX_t,\theta_{\star}))^2}\nonumber\\
    &\leq 8 M_T(\hat{\theta})+r^2+4L_1^2\Btheta^2\left(\frac{8\sqrt{8}L_1\Btheta}{r}\right)^{2\dtheta}\exp\pars{-\frac{T^{1-b_2}}{8b_1L_1^4a^2}},
\end{align}
where $\barX_{0:T-1}=(\barX_0,\ldots,\barX_{T-1})$.
\end{lemma}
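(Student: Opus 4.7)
The plan is to combine the first-order optimality of the least-squares estimator with the uniform lower isometry provided by \Cref{lemma:analogue-of-Th.5.2}. First, starting from $L_T(\hattheta)\leq L_T(\theta_\star)$ and writing $Y_t-f_t(X_t,\theta)=W_t-g_t(X_t,\theta)$, I would expand the squares and cancel the $W_t^2$ contributions to obtain
\[
\frac{1}{T}\sumt g_t^2(X_t,\hattheta)\;\leq\;\frac{2}{T}\sumt W_t\,g_t(X_t,\hattheta).
\]
Substituting this inequality into the definition of $M_T(\hattheta)=\frac{4}{T}\sumt W_tg_t(X_t,\hattheta)-\frac{1}{T}\sumt g_t^2(X_t,\hattheta)$ yields the clean empirical bound $\frac{1}{T}\sumt g_t^2(X_t,\hattheta)\leq M_T(\hattheta)$. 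This purely algebraic step is exactly what allows the martingale offset to control the in-sample error.

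Next, I would pass from the empirical in-sample error to the population risk by invoking \Cref{lemma:analogue-of-Th.5.2} with the chosen radius $r\in(0,\sqrt{8}L_1\Btheta]$. Let $\mathcal{E}$ denote the complement of the failure event in that lemma, whose probability is at most $\pars{8\sqrt{8}L_1\Btheta/r}^{2d_\theta}\exp\pars{-T^{1-b_2}/(8b_1L_1^4a^2)}$. I would then case-split on whether $\hattheta\in\calB_r$. If $\hattheta\in\calB_r$, then by definition $\frac{1}{T}\sumt\E g_t^2(X_t,\hattheta)\leq r^2$. Otherwise, on $\mathcal{E}$ the uniform isometry applied at $\theta=\hattheta$ gives
\[
\frac{1}{8T}\sumt\E g_t^2(X_t,\hattheta)\;\leq\;\frac{1}{T}\sumt g_t^2(X_t,\hattheta)\;\leq\;M_T(\hattheta).
\]
Either way, on $\mathcal{E}$ one has $\frac{1}{T}\sumt\E g_t^2(X_t,\hattheta)\leq 8M_T(\hattheta)+r^2$.

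Third, I would translate this population-risk bound into the desired prediction-error bound. Since $\{\barX_t\}_{t=0}^{T-1}$ is independent of the training data (and hence of $\hattheta$) and each $\barX_t$ has the same marginal distribution as $X_t$, one has the identity
\[
\E_{\barX_{0:T-1}}\bracks{\frac{1}{T}\sumt g_t^2(\barX_t,\hattheta)}\;=\;\frac{1}{T}\sumt\E g_t^2(X_t,\theta)\Big|_{\theta=\hattheta},
\]
so the bound above controls the desired quantity on $\mathcal{E}$. On the bad event $\mathcal{E}^c$, I would use the deterministic a-priori bound $|g_t(x,\theta)|\leq L_1\|\theta-\theta_\star\|\leq 2L_1\Btheta$, coming from \Cref{assumption:regression_function} and $\sfM\subseteq\setB_{\Btheta}^{\dtheta}$, which gives $g_t^2\leq 4L_1^2\Btheta^2$ pointwise. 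Weighting the failure contribution by $\P(\mathcal{E}^c)$ and combining with the above produces the third summand in \eqref{inequality:analogue-of-Th.4.1}.

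The main technical obstacle is isolated in \Cref{lemma:analogue-of-Th.5.2}, whose proof is what does the real work of handling temporal dependence via the dependency matrix. Once that uniform lower isometry is in hand, the proof of the present lemma is essentially bookkeeping: optimality gives the empirical-to-offset conversion, isometry gives the empirical-to-population conversion, and independence of $\barX$ identifies the left-hand side with the population risk. A minor subtlety worth verifying along the way is that the isometry is genuinely uniform in $\theta$, so that it can be applied at the random, data-dependent point $\hattheta$; this is automatic from the supremum formulation of \Cref{lemma:analogue-of-Th.5.2}.
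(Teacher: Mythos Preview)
Your proposal is correct and follows essentially the same approach as the paper: derive the basic inequality $\frac{1}{T}\sumt g_t^2(X_t,\hattheta)\leq M_T(\hattheta)$ from least-squares optimality, invoke the uniform lower isometry of \Cref{lemma:analogue-of-Th.5.2} together with a case split on $\calB_r$ to bound the population risk by $8M_T(\hattheta)+r^2$ on the good event, use the crude Lipschitz bound $g_t^2\leq 4L_1^2\Btheta^2$ on the failure event weighted by its probability, and identify the left-hand side with the population risk via independence of the fresh sample. The paper's organization differs only cosmetically---it first establishes the bound for all fixed $\theta$ before specializing to $\hattheta$, and uses the opposite convention for the event $\calE$---but the substance is identical.
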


\begin{proof}
Let $\calB_r$ and $g_t(\cdot,\cdot)$ be as in Lemma~\ref{lemma:analogue-of-Th.5.2} and define the event:
\begin{align*}
    \calE=&\left\{\inf_{\theta\in\sfM\setminus \calB_r}\left(\frac{1}{T}\sumt g_t^2(X_t,\theta)-\frac{1}{8T}\sumt \E g_t^2(X_t.\theta)\right)\leq0\right\}.
\end{align*}
Moreover, set $X_{0:T-1}=(X_0,\ldots,X_{T-1})$ and let:
\begin{equation*}
    N_T(X_{0:T-1},\theta)=\frac{1}{T}\sumt g_t^2(X_t,\theta),
\end{equation*}
for all input samples $X_{0:T-1}$ and parameters $\theta\in\sfM$. On the complement $\calE^{c}$ of $\calE$ we have:
\begin{align}\label{lemma:B.1-6}
\frac{1}{T}\sumt\E g_t^2(X_t,\theta)&\leq \max\left\{8N_T(X_{0:T-1},\theta),r^2\right\}\leq 8N_T(X_{0:T-1},\theta)+r^2,
\end{align}
for all input samples $X_{0:T-1}$ and parameters $\theta\in\sfM$. Given Assumption~\ref{assumption:regression_function} and the fact that  $\sfM\subseteq\setB_{B_{\theta}}^{d_{\theta}}$, we have:
\begin{align}\label{lemma:B.1-3.5}
    &\frac{1}{T}\sumt g_t^2(X_t,\theta)=\frac{1}{T}\sumt (f_t(X_t,\theta)-f_t(X_t,\theta_{\star}))^2\leq L_1^2\norm{\theta-\theta_{\star}}^2\leq4L_1^2\Btheta^2,
\end{align}
for all input samples $X_{0:T-1}$ and parameters $\theta\in\sfM$. Employing \eqref{lemma:B.1-6} and \eqref{lemma:B.1-3.5}, we can write: 
\begin{align}\label{lemma:B.1-7}
    \frac{1}{T}\sumt\E g_t^2(X_t,\theta) &= \frac{1}{T}\sumt\E\mathds{1}_{\calE^c}g_t^2(X_t,\theta)+\frac{1}{T}\sumt\E\mathds{1}_{\calE}g_t^2(X_t,\theta)\nonumber\\
    &\leq 8 N_T(X_{0:T-1},\theta)+r^2+4L_1^2\Btheta^2\P(\calE),
\end{align}
for all input samples $X_{0:T-1}$ and parameters $\theta\in\sfM$, given that  $\E\mathds{1}_{\calE}=\P(\calE)$. As a byproduct of the proof of \cite[Lemma 7]{Ziemann2022a}, we have the basic inequality:
\begin{align}\label{lemma:B.1-9.5}
    &N_T(X_{0:T-1},\hattheta)\leq M_T(\hattheta).
\end{align}
Setting $\theta=\hattheta$ in \eqref{lemma:B.1-7} and employing \eqref{inequality:analogue-of-Th.5.2} from Lemma~\ref{lemma:analogue-of-Th.5.2} and \eqref{lemma:B.1-9.5}, we obtain:
\begin{align*}
    &\E_{\barX_{0:T-1}} \bracks{\frac{1}{T}\sumt g_t^2(\barX_t,\hattheta)}\leq 8 M_T(\hattheta)+r^2+4L_1^2\Btheta^2\pars{\frac{8\sqrt{8}L_1\Btheta}{r}}^{2d_{\theta}}\exp\pars{-\frac{T^{1-b_2}}{8b_1L_1^4a^2}},
\end{align*}
where $\thickbar{X}_{0:T-1}=(\thickbar{X}_0,\ldots,\thickbar{X}_{T-1})$,
thus completing the proof.
\end{proof}

Now, we can prove Theorem~\ref{theorem:theorem2}. Fix any $\gamma\in[0,1)$ and set $r=\frac{L_1\Btheta}{T^{(1+\gamma)/2}}$. Then, if the third term on the right-hand side of \eqref{inequality:analogue-of-Th.4.1} is dominated by $r^2$, inequality \eqref{inequality:theorem2} trivially follows. We can write this term as follows:
\begin{align*}
4L_1^2\Btheta^2\pars{\frac{8\sqrt{8}L_1\Btheta}{r}}^{2d_{\theta}}\exp\pars{-\frac{T^{1-b_2}}{8b_1L_1^4a^2}}
&=4L_1^2\Btheta^2\exp\pars{2d_{\theta}\log\pars{\frac{8\sqrt{8}L_1\Btheta}{r}}-\frac{T^{1-b_2}}{8b_1L_1^4a^2}}\\
&=4L_1^2\Btheta^2\exp\pars{2d_{\theta}\log\pars{8\sqrt{8}T^{\frac{1+\gamma}{2}}}-\frac{T^{1-b_2}}{8b_1L_1^4a^2}}.
\end{align*}
Now, we choose $T$ large enough so that:
\begin{align*}
&2d_{\theta}\log\pars{8\sqrt{8}T^{\frac{1+\gamma}{2}}}-\frac{T^{1-b_2}}{8b_1L_1^4a^2}\leq-\frac{T^{1-b_2}}{16b_1L_1^4a^2}\\
\iff &\,T^{1-b_2}\geq 32d_{\theta}b_1L_1^4a^2\log\pars{8\sqrt{8}T^{\frac{1+\gamma}{2}}}\\
\iff &\,T^{1-b_2}\geq C\bigg(\log(8\sqrt{8})+\frac{1+\gamma}{2(1-b_2)}\log T^{1-b_2}\bigg),
\end{align*}
where $C=32d_{\theta}b_1L_1^4a^2$. Therefore, it suffices to require that:
\begin{equation}\label{theorem2-1}
    T^{1-b_2}\geq C\log\pars{8\sqrt{8}},\, T^{1-b_2}\geq\frac{C(1+\gamma)}{2(1-b_2)}\log T^{1-b_2}.
\end{equation}
By \cite[Lemma A.4]{Simchowitz2018}, the right inequality holds when:
\begin{equation*}
    T^{1-b_2}\geq \frac{C(1+\gamma)}{1-b_2}\log\pars{\frac{2C(1+\gamma)}{1-b_2}}.
\end{equation*}
If all the above requirements on $T$ hold, in order for $r^2$ to dominate the third term on the right-hand side of \eqref{inequality:analogue-of-Th.4.1}, it suffices to have:
\begin{align*}
&4L_1^2\Btheta^2\exp\pars{-\frac{2d_{\theta}T^{1-b_2}}{C}}\leq\frac{L_1^2\Btheta^2}{T^{1+\gamma}}\\
\iff &\,T^{1-b_2}\geq \frac{C}{2d_{\theta}}\log (4T^{1+\gamma})\\
\iff &\,T^{1-b_2}\geq\frac{C}{2\dtheta}\pars{\log4+\frac{1+\gamma}{1-b_2}\log T^{1-b_2}}.
\end{align*}
Hence, it suffices to require that:
\begin{equation*}
    T^{1-b_2}\geq \frac{C\log4}{2\dtheta},\, T^{1-b_2}\geq\frac{C(1+\gamma)}{2\dtheta(1-b_2)}\log T^{1-b_2},
\end{equation*}
which is ensured by \eqref{theorem2-1}, given that $\dtheta\in\setN_+$ and $\log(8\sqrt{8})\geq\log4/2$. Combining all of our conditions on $T$ with the fact that $\gamma<1$ (by assumption), we require that $T\geq T_1$, where:
\begin{align}\label{burn_in_time_1_exact}
    T_1= &\max\left\{\pars{32d_{\theta}b_1L_1^4a^2\log\pars{8\sqrt{8}}}^{1/(1-b_2)},\pars{\frac{64d_{\theta}b_1L_1^4a^2}{1-b_2}\log\pars{\frac{128d_{\theta}b_1L_1^4a^2}{1-b_2}}}^{1/(1-b_2)}\right\},   
\end{align}
which completes the proof.

\section{Proof of Lemma~\ref{lemma:taylor_bound}}\label{Proof_of_Lemma_1}

Employing the Taylor expansion given in \eqref{lemma:taylor_bound-2}, the martingale offset of $\hattheta$ can be written as follows:
\begin{align}\label{lemma:taylor_bound-4}
    &M_T(\hattheta)= \frac{1}{T}\sumt \left(4W_tg_t(X_t,\hattheta)-g_t^2(X_t,\hattheta)\right)\nonumber\\
    &=\frac{1}{T}\sum_{t=0}^{T-1}\left[4W_t\bigg(Z_t^{\T}(\hat{\theta}-\theta_{\star})+\frac{1}{2}(\hat{\theta}-\theta_{\star})^{\intercal}V_t(\hat{\theta}-\theta_{\star})\bigg)-\left(Z_t^{\T}(\hat{\theta}-\theta_{\star})+\frac{1}{2}(\hat{\theta}-\theta_{\star})^{\intercal}V_t(\hat{\theta}-\theta_{\star})\right)^2\right].
\end{align}
Invoking \eqref{parallelogram} from Lemma~\ref{lemma:parallelogram} and Cauchy-Schwarz inequality, \eqref{lemma:taylor_bound-4} yields:
\begin{align}\label{lemma:taylor_bound-5}
    M_T(\hattheta)\leq &\;\frac{1}{T}\sum_{t=0}^{T-1}\Bigg[4W_tZ_t^{\T}(\hat{\theta}-\theta_{\star})+2W_t(\hat{\theta}-\theta_{\star})^{\intercal}V_t(\hat{\theta}-\theta_{\star})\nonumber\\
    &\hspace{1.3cm}-\frac{1}{2}\big(Z_t^{\T}(\hat{\theta}-\theta_{\star})\big)^2+\bigg(\frac{1}{2}(\hat{\theta}-\theta_{\star})^{\intercal}V_t(\hat{\theta}-\theta_{\star})\bigg)^2\Bigg]\nonumber\\
    \leq &\;\thickbar{M}_T(\hattheta)+\normbig{\frac{2}{T}\sumt W_tV_t}\norm{\hat{\theta}-\theta_{\star}}^2+\frac{1}{4T}\sumt\big((\hat{\theta}-\theta_{\star})^{\intercal}V_t(\hat{\theta}-\theta_{\star})\big)^2,
\end{align}
where $\thickbar{M}_T(\hattheta)$ is defined as in \eqref{linearized_term}. For each $t$, we have:
\begin{align}
    \label{lemma:taylor_bound-8}
    &|(\hattheta-\theta_{\star})^{\T}V_t(\hattheta-\theta_{\star})|\leq \norm{V_t}\norm{\hattheta-\theta_{\star}}^2\leq L_2\norm{\hattheta-\theta_{\star}}^2,
\end{align}
where the last inequality follows from Assumption~\ref{assumption:regression_function}. Combining \eqref{lemma:taylor_bound-5} and \eqref{lemma:taylor_bound-8}, we obtain:
\begin{align*}
    M_T(\hattheta)\leq &\;\thickbar{M}_T(\hattheta)+\normbig{\frac{2}{T}\sumt W_tV_t}\norm{\hat{\theta}-\theta_{\star}}^2+\frac{L_2^2}{4}\norm{\hat{\theta}-\theta_{\star}}^4,
\end{align*}
which completes the proof.

\section{Proof of Theorem~\ref{theorem:expected_self_normalized_martingale}}\label{Proof_of_Theorem_3}

Let us define:
\begin{equation}\label{hatSigmaT}
    \hatSigmaT = \frac{1}{T}\sumt Z_tZ_t^{\T}
\end{equation}
and:
\begin{equation}\label{barSigma}
    \barSigma=\frac{1}{T}\sumt \E(Z_tZ_t^{\T}),
\end{equation}
where $Z_t=\gradtheta f_t(X_t,\theta_{\star})$, as in the Taylor expansion given in \eqref{lemma:taylor_bound-2}, for all $t=0,\ldots,T-1$.

First, we want to derive high-probability inequalities of the form:
\begin{equation*}
    C_1\barSigma\preceq\hatSigmaT\preceq C_2\barSigma,
\end{equation*}
where $C_1$ and $C_2$ are positive constants. We typically refer to the right inequality as \textit{upper isometry} and the left inequality as \textit{lower isometry}. We present our results in the next two lemmas, which rely on the concentration inequalities \eqref{inequality:analogue-of-Samson-Th.2-1} and \eqref{inequality:analogue-of-Samson-Th.2-2} from Lemma~\ref{lemma:analogue_of_Th.5.1}. 

\begin{lemma}[Upper Isometry]\label{lemma:upper_isometry}
Consider the predictor model \eqref{model} and the parameter class $\sfM$ under Assumptions~\cref{assumption:input_dependency,assumption:regression_function,assumption:excitation}.
Let $\hatSigmaT$ and $\barSigma$ be as in \eqref{hatSigmaT} and \eqref{barSigma}, respectively. Then, if $T\geq T_{21}$, where:
\begin{equation}\label{burn_in_time_upper}
    T_{21}=\pars{\frac{24d_{\theta}b_1L_1^2}{\lambda_0}\log\pars{\frac{6L_1}{\sqrt{\lambda_0}}}}^{1/(1-b_2)},
\end{equation}
we have:
\begin{equation*}
    \P\pars{\hatSigmaT\not\preceq8\barSigma}\leq\exp\pars{-\frac{\lambda_0T^{1-b_2}}{24b_1L_1^2}}.
\end{equation*}
\end{lemma}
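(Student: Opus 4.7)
The approach is to combine a uniform net argument on the unit sphere with the scalar concentration inequality \eqref{inequality:analogue-of-Samson-Th.2-1} from Lemma~\ref{lemma:analogue_of_Th.5.1}. The key preliminary observation is that the multiplicative matrix inequality $\hatSigmaT \preceq 8 \barSigma$ can be reduced to an \emph{additive} scalar one thanks to Assumption~\ref{assumption:excitation}: since $\barSigma \succeq \lambda_0 \mathbb{I}_{d_{\theta}}$, any deviation bound of the form $\|\hatSigmaT - \barSigma\| \leq 7\lambda_0$ immediately yields $\hatSigmaT \preceq \barSigma + 7\lambda_0 \mathbb{I}_{d_{\theta}} \preceq 8\barSigma$. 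Thus it suffices to control the operator norm $\|\hatSigmaT - \barSigma\|$.

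To do so, I would take an $\e$-net $\calN_{\e}$ of $\setS^{d_{\theta}-1}$ at resolution $\e = \sqrt{\lambda_0}/(2L_1)$, which by Lemma~\ref{lemma:covering_number} has cardinality at most $(6L_1/\sqrt{\lambda_0})^{d_{\theta}}$. The standard net bound for symmetric matrices yields $\|\hatSigmaT - \barSigma\| \leq (1-2\e)^{-1} \max_{u \in \calN_{\e}} |u^{\T}(\hatSigmaT - \barSigma) u|$. For each fixed $u$ in the net, the scalar $u^{\T} \hatSigmaT u = (1/T)\sum_{t=0}^{T-1} (u^{\T} Z_t)^2$ is a sum of terms lying in $[0, L_1^2]$ by Assumption~\ref{assumption:regression_function}. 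Applying \eqref{inequality:analogue-of-Samson-Th.2-1} (together with its lower-tail counterpart \eqref{inequality:analogue-of-Samson-Th.2-2}) with $C = L_1^2$ and deviation threshold $s$ proportional to $\lambda_0 T$ yields a per-vector exponential tail of order $\exp(-c\lambda_0 T / (L_1^2 \|\Gammadep(\sfP_{\sfX})\|^2))$. Invoking Assumption~\ref{assumption:input_dependency} then replaces $\|\Gammadep(\sfP_{\sfX})\|^2$ with the sublinear upper bound $b_1 T^{b_2}$, so that the exponent becomes proportional to $\lambda_0 T^{1-b_2}/(b_1 L_1^2)$.

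Finally, a union bound over $\calN_{\e}$ contributes a term $d_{\theta} \log(6L_1/\sqrt{\lambda_0})$ to the exponent; the burn-in condition $T \geq T_{21}$ in \eqref{burn_in_time_upper} is precisely what is needed to absorb this covering cost while leaving the advertised tail $\exp(-\lambda_0 T^{1-b_2}/(24 b_1 L_1^2))$. The main obstacle is not conceptual but one of constant-tracking: the net resolution $\e \sim \sqrt{\lambda_0}/L_1$, the deviation threshold $s \sim \lambda_0 T$, and the slack between the intermediate bound $\|\hatSigmaT - \barSigma\| \leq 7\lambda_0$ and the final target $8\barSigma$ must all be balanced so that the precise coefficient $1/24$ in the exponent and the precise argument $6L_1/\sqrt{\lambda_0}$ in the burn-in both emerge cleanly. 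Conceptually, everything reduces to applying Samson's time-series Bernstein bound in a single fixed direction and then stitching across directions via a standard covering; the temporal dependence of $\{X_t\}_{t=0}^{T-1}$ enters only through the scalar $\|\Gammadep(\sfP_{\sfX})\|^2 \leq b_1 T^{b_2}$.
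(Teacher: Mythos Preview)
Your overall strategy---scalar concentration via Samson's inequality in each fixed direction, then a net argument over $\setS^{d_\theta-1}$---is the same as the paper's, but your additive reduction contains a gap that prevents you from reaching the advertised exponent $\lambda_0 T^{1-b_2}/(24 b_1 L_1^2)$. The problem is your target $\|\hatSigmaT - \barSigma\| \leq 7\lambda_0$. To obtain this via \eqref{inequality:analogue-of-Samson-Th.2-1} with $C=L_1^2$ and threshold of order $\lambda_0 T$, the per-direction tail reads
\[
\P\big(u^{\T}\hatSigmaT u \geq u^{\T}\barSigma u + c'\lambda_0\big) \leq \exp\!\left(-\frac{(c')^2\lambda_0^2\, T}{2L_1^2\normGamma^2\,(u^{\T}\barSigma u + c'\lambda_0)}\right),
\]
and since $u^{\T}\barSigma u$ can be as large as $L_1^2$, in the worst direction the exponent scales like $\lambda_0^2 T/(L_1^4\normGamma^2)$ rather than $\lambda_0 T/(L_1^2\normGamma^2)$. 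So your approach delivers only a tail of order $\exp(-c\lambda_0^2 T^{1-b_2}/(b_1 L_1^4))$ and a burn-in with $L_1^4/\lambda_0^2$ in place of $L_1^2/\lambda_0$; the claimed constants do \emph{not} emerge cleanly.

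The paper avoids this by taking a \emph{multiplicative} deviation $s = \tfrac{1}{2}\, u^{\T}\barSigma u$ in each direction, so that the Bernstein ratio in Samson's bound simplifies to $(u^{\T}\barSigma u)T/(12 L_1^2\normGamma^2) \geq \lambda_0 T/(12 L_1^2\normGamma^2)$ uniformly in $u$. It then passes from the net to all of $\setS^{d_\theta-1}$ not via the $(1-2\e)^{-1}$ operator-norm trick---which would force you back to an additive two-sided bound---but via the elementary parallelogram inequalities $(u^{\T} Z_t)^2 \leq 2(u_i^{\T} Z_t)^2 + 2((u-u_i)^{\T} Z_t)^2$ and $(u_i^{\T}\barSigma^{1/2})^2 \leq 2(u^{\T}\barSigma^{1/2})^2 + 2((u-u_i)^{\T}\barSigma^{1/2})^2$, which preserve the multiplicative structure and convert the net-level factor $3/2$ into the global factor $8$ with $\e = \sqrt{\lambda_0}/(2L_1)$. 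If you wish to rescue an additive operator-norm argument, you would need to whiten first and control $\|\barSigma^{-1/2}\hatSigmaT\barSigma^{-1/2} - \mathbb{I}_{d_\theta}\|$; only then does the Samson denominator become $u$-independent and the correct $\lambda_0/L_1^2$ scaling reappear.
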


\begin{proof}
Fix any $u\in\setS^{d_{\theta}-1}$ and for each $t$, set $g_t(X_t,\theta_{\star})=(u^{\T}Z_t)^2/T$. By Cauchy-Schwarz inequality and Assumption~\ref{assumption:regression_function}, notice that $g_t(X_t,\theta_{\star})\leq L_1^2/T$, for all $t=0,\ldots,T-1$. Employing  \eqref{inequality:analogue-of-Samson-Th.2-1} from Lemma~\ref{lemma:analogue_of_Th.5.1} with $\theta=\theta_{\star}$ and $s=\frac{1}{2}u^{\T}\barSigma u$, we obtain:
\begin{align}\label{lemma:upper_isometry-0.5}
&\P\pars{\frac{1}{T}\sumt(u^{\T}Z_t)^2\geq\frac{3}{2}(u^{\T}\barSigma^{1/2})^2}\leq\exp\pars{-\frac{(u^{\T}\barSigma u)T}{12L_1^2\normGamma^2}}.
\end{align}
Invoking Assumption~\ref{assumption:input_dependency}, \eqref{lemma:upper_isometry-0.5} yields:
\begin{align}\label{lemma:upper_isometry-1}
&\P\pars{\frac{1}{T}\sumt(u^{\T}Z_t)^2\geq\frac{3}{2}(u^{\T}\barSigma^{1/2})^2}\leq\exp\pars{-\frac{(u^{\T}\barSigma u)T^{1-b_2}}{12b_1L_1^2}}.
\end{align}
Fix any $\e\in(0,1]$ and let $\calN_{\e}$ be an $\e$-net of $\setS^{d_{\theta}-1}$ with respect to the norm $\norm{\cdot}$. Moreover, define the event:
\begin{align*}
\calE=\bigcup_{u_i\in\calN_{\e}}\left\{\frac{1}{T}\sumt(u_i^{\T}Z_t)^2\geq\frac{3}{2}(u_i^{\T}\barSigma^{1/2})^2\right\}.
\end{align*}
By a union bound and \eqref{lemma:upper_isometry-1}, we can write:
\begin{align}\label{lemma:upper_isometry-2.5}
\P(\calE)&\leq\sum_{u_i\in\calN_{\e}}\P\pars{\frac{1}{T}\sumt(u_i^{\T}Z_t)^2\geq\frac{3}{2}(u_i^{\T}\barSigma^{1/2})^2}\nonumber\\
&\leq\sum_{u_i\in\calN_{\e}}\exp\pars{-\frac{(u_i^{\T}\barSigma u_i)T^{1-b_2}}{12b_1L_1^2}}\nonumber\\
&\leq |\calN_{\e}|\max_{u_i\in\calN_{\e}}\exp\pars{-\frac{(u_i^{\T}\barSigma u_i)T^{1-b_2}}{12b_1L_1^2}}\nonumber\\
&\leq |\calN_{\e}|\sup_{u\in\setS^{\dtheta-1}}\exp\pars{-\frac{(u^{\T}\barSigma u)T^{1-b_2}}{12b_1L_1^2}}\nonumber\\
&= |\calN_{\e}|\exp\pars{-\frac{\inf_{u\in\setS^{\dtheta-1}}(u^{\T}\barSigma u)T^{1-b_2}}{12b_1L_1^2}}.
\end{align}
Combining \eqref{lemma:upper_isometry-2.5} with \eqref{covering_number} from Lemma~\ref{lemma:covering_number} and Assumption~\ref{assumption:excitation}, we get:
\begin{align}\label{lemma:upper_isometry-3}
\P(\calE)&\leq \pars{\frac{3}{\e}}^{d_{\theta}}\exp\pars{-\frac{\lambda_0T^{1-b_2}}{12b_1L_1^2}}.
\end{align}
By definition of $\calN_{\e}$, for every $u\in\setS^{d_{\theta}-1}$ there exists $u_i\in\calN_{\e}$ such that $\norm{u-u_i}\leq\e$, and thus on the complement $\calE^c$ of $\calE$ we have:
\begin{align}\label{lemma:upper_isometry-4}
    \frac{1}{T}\sumt(u^{\T}Z_t)^2&\leq\frac{2}{T}\sumt(u_i^{\T}Z_t)^2+\frac{2}{T}\sumt((u-u_i)^{\T}Z_t)^2\hspace{3.7cm}(\text{from }\eqref{parallelogram})\nonumber\\
    &\leq 3(u_i^{\T}\barSigma^{1/2})^2+\frac{2}{T}\sumt((u-u_i)^{\T}Z_t)^2 \hspace{4.6cm}(\calE^c)\nonumber\\
    &\leq 6(u^{\T}\barSigma^{1/2})^2+6((u-u_i)^{\T}\barSigma^{1/2})^2+\frac{2}{T}\sumt((u-u_i)^{\T}Z_t)^2 \hspace{0.8cm}(\text{from }\eqref{parallelogram})\nonumber\\
    &\leq 6(u^{\T}\barSigma^{1/2})^2+8\e^2L_1^2,
\end{align}
where the last step follows from Cauchy-Schwarz inequality, Assumption~\ref{assumption:regression_function}, and $\norm{u-u_i}\leq\e$. By Assumptions~\cref{assumption:regression_function,assumption:excitation}, we have $0<\lambda_0\leq u^{\T}\barSigma u=(u^{\T}\barSigma^{1/2})^2\leq L_1^2$. Hence, setting $\e=\frac{\sqrt{\lambda_0}}{2L_1}$, we have $\e\in(0,1]$ and \eqref{lemma:upper_isometry-4} yields:
\begin{align}\label{lemma:upper_isometry-5}
   &\frac{1}{T}\sumt(u^{\T}Z_t)^2\leq 6(u^{\T}\barSigma^{1/2})^2+2\lambda_0\leq8(u^{\T}\barSigma^{1/2})^2 \iff u^{\T}\hatSigmaT u\leq 8u^{\T}\barSigma u.
\end{align}
From \eqref{lemma:upper_isometry-3} and \eqref{lemma:upper_isometry-5} we get:
\begin{align*}
&\P\pars{\exists u\in\setS^{d_{\theta}-1}:u^{\T}(\hatSigmaT-8\barSigma) u\geq0}\leq \pars{\frac{6L_1}{\sqrt{\lambda_0}}}^{d_{\theta}}\exp\pars{-\frac{\lambda_0T^{1-b_2}}{12b_1L_1^2}},
\end{align*}
which implies that:
\begin{equation*}
    \P\pars{\hatSigmaT\not\preceq8\barSigma}\leq\pars{\frac{6L_1}{\sqrt{\lambda_0}}}^{d_{\theta}}\exp\pars{-\frac{\lambda_0T^{1-b_2}}{12b_1L_1^2}}.
\end{equation*}
To finish the proof, it suffices to show that if $T\geq T_{21}$, where $T_{21}$ is defined as in \eqref{burn_in_time_upper}, we have:
\begin{align*}
    &\pars{\frac{6L_1}{\sqrt{\lambda_0}}}^{d_{\theta}}\exp\pars{-\frac{\lambda_0T^{1-b_2}}{12b_1L_1^2}}\leq\exp\pars{-\frac{\lambda_0T^{1-b_2}}{24b_1L_1^2}}\\
    \iff &\,\exp\pars{d_{\theta}\log\pars{\frac{6L_1}{\sqrt{\lambda_0}}}-\frac{\lambda_0T^{1-b_2}}{12b_1L_1^2}}\leq\exp\pars{-\frac{\lambda_0T^{1-b_2}}{24b_1L_1^2}}\\
    \iff &\,T^{1-b_2}\geq \frac{24d_{\theta}b_1L_1^2}{\lambda_0}\log\pars{\frac{6L_1}{\sqrt{\lambda_0}}},
\end{align*}
which is true by the last inequality.
\end{proof}

\begin{lemma}[Lower Isometry]\label{lemma:lower_isometry}
Consider the predictor model \eqref{model} and the parameter class $\sfM$ under Assumptions~\cref{assumption:input_dependency,assumption:regression_function,assumption:excitation}. Let $\hatSigmaT$ and $\barSigma$ be as in \eqref{hatSigmaT} and \eqref{barSigma}, respectively. Then, if $T\geq T_{22}$, where:
\begin{equation}\label{burn_in_time_lower}
    T_{22}=\pars{\frac{16d_{\theta}b_1L_1^2}{\lambda_0}\log\pars{\frac{15L_1}{\sqrt{\lambda_0}}}}^{1/(1-b_2)},
\end{equation}
we have:
\begin{equation*}
    \P\pars{\hatSigmaT\not\succeq\frac{1}{16}\barSigma}\leq\exp\pars{-\frac{\lambda_0T^{1-b_2}}{16b_1L_1^2}}.
\end{equation*}
\end{lemma}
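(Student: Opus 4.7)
The plan is to mirror the proof of the upper isometry lemma (Lemma~\ref{lemma:upper_isometry}), but invoking the lower-tail inequality \eqref{inequality:analogue-of-Samson-Th.2-2} from Lemma~\ref{lemma:analogue_of_Th.5.1} in place of \eqref{inequality:analogue-of-Samson-Th.2-1} and the left inequality in \eqref{parallelogram} from Lemma~\ref{lemma:parallelogram} in place of the right one. Fixing $u\in\setS^{d_\theta-1}$ and setting $g_t(X_t,\theta_\star)=(u^{\T}Z_t)^2/T$, Assumption~\ref{assumption:regression_function} gives $g_t\leq L_1^2/T$, so \eqref{inequality:analogue-of-Samson-Th.2-2} with $s=\tfrac12 u^{\T}\barSigma u$ together with Assumption~\ref{assumption:input_dependency} yields
\begin{equation*}
\P\!\left(\frac{1}{T}\sumt(u^{\T}Z_t)^2\leq\frac{1}{2}u^{\T}\barSigma u\right)\leq \exp\!\left(-\frac{(u^{\T}\barSigma u)T^{1-b_2}}{8b_1L_1^2}\right).
\end{equation*}

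Next I would discretize by taking an $\e$-net $\calN_\e$ of $\setS^{d_\theta-1}$ with the covering bound from Lemma~\ref{lemma:covering_number}, union-bound the event above over $\calN_\e$, and invoke Assumption~\ref{assumption:excitation} together with $u_i^{\T}\barSigma u_i \geq \lambda_0$ to get
\begin{equation*}
\P(\calE) \leq \pars{\frac{3}{\e}}^{d_\theta} \exp\!\left(-\frac{\lambda_0 T^{1-b_2}}{8b_1L_1^2}\right),
\end{equation*}
where $\calE$ is the union of lower-tail failure events over $\calN_\e$.

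The key step is transferring the bound from the net to all of $\setS^{d_\theta-1}$ using the left part of \eqref{parallelogram}. For any $u\in\setS^{d_\theta-1}$, pick $u_i\in\calN_\e$ with $\norm{u-u_i}\leq\e$; on $\calE^c$ we have
\begin{align*}
\frac{1}{T}\sumt(u^{\T}Z_t)^2 &\geq \frac{1}{2T}\sumt(u_i^{\T}Z_t)^2-\frac{1}{T}\sumt((u-u_i)^{\T}Z_t)^2\\
&\geq \frac{1}{4}(u_i^{\T}\barSigma u_i)-\e^2L_1^2\\
&\geq \frac{1}{8}(u^{\T}\barSigma u)-\frac{5}{4}\e^2L_1^2,
\end{align*}
where the second line uses $\calE^c$ and Cauchy-Schwarz with Assumption~\ref{assumption:regression_function}, and the third applies the left inequality of \eqref{parallelogram} again (now at the population level) together with the same Cauchy-Schwarz bound. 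Choosing $\e$ of order $\sqrt{\lambda_0}/L_1$, say $\e^2=\lambda_0/(20L_1^2)$, one absorbs the $\tfrac{5}{4}\e^2 L_1^2$ slack into $\tfrac{1}{16}u^{\T}\barSigma u\leq \tfrac{1}{16}(u^{\T}\barSigma u)$ using $u^{\T}\barSigma u\geq\lambda_0$, yielding $u^{\T}\hatSigmaT u\geq \tfrac{1}{16} u^{\T}\barSigma u$ uniformly on $\calE^c$.

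Combining the two inequalities gives
\begin{equation*}
\P\!\left(\hatSigmaT\not\succeq\tfrac{1}{16}\barSigma\right)\leq\pars{\frac{cL_1}{\sqrt{\lambda_0}}}^{d_\theta}\exp\!\left(-\frac{\lambda_0T^{1-b_2}}{8b_1L_1^2}\right)
\end{equation*}
for a small numerical constant $c$ (the constant $15$ in \eqref{burn_in_time_lower} corresponds to $3/\e$ with the specific $\e$ above). To convert this into the stated tail $\exp(-\lambda_0 T^{1-b_2}/(16 b_1 L_1^2))$, I would require $T$ large enough that the covering pre-factor $d_\theta\log(cL_1/\sqrt{\lambda_0})$ is dominated by half of the exponent in the concentration bound; solving this linear condition in $T^{1-b_2}$ gives precisely the burn-in $T_{22}$ in \eqref{burn_in_time_lower}. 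I expect no conceptual obstacle beyond bookkeeping of the constants: the main subtlety is that \emph{both} the net discretization and the transfer inequality lose a factor of $2$, so the parallelogram argument must be applied twice, which is why the constant $16$ (rather than $8$) appears in the final lower isometry; this is what distinguishes this proof from the upper isometry version.
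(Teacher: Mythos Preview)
Your proposal is correct and follows essentially the same approach as the paper's proof: pointwise lower-tail concentration via \eqref{inequality:analogue-of-Samson-Th.2-2}, a union bound over an $\e$-net of $\setS^{d_\theta-1}$, two applications of the left inequality in \eqref{parallelogram} to transfer from the net to the full sphere, the choice $\e=\sqrt{\lambda_0}/(\sqrt{20}L_1)$, and finally absorbing the covering prefactor into the exponent via the burn-in condition $T\geq T_{22}$. Your constants and the explanation for why $16$ (rather than $8$) appears match the paper exactly.
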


\begin{proof}
Fix any $u\in\setS^{d_{\theta}-1}$ and for each $t$, set $g_t(X_t,\theta_{\star})=(u^{\T}Z_t)^2/T$. By Cauchy-Schwarz inequality and Assumption~\ref{assumption:regression_function}, notice that $g_t(X_t,\theta_{\star})\leq L_1^2/T$, for all $t=0,\ldots,T-1$. Employing \eqref{inequality:analogue-of-Samson-Th.2-2} from Lemma~\ref{lemma:analogue_of_Th.5.1} with $\theta=\theta_{\star}$ and $s=\frac{1}{2}u^{\T}\barSigma u$, we obtain:
\begin{align}\label{lemma:lower_isometry-0.5}
&\P\pars{\frac{1}{T}\sumt(u^{\T}Z_t)^2\leq\frac{1}{2}(u^{\T}\barSigma^{1/2})^2}\leq\exp\pars{-\frac{(u^{\T}\barSigma u)T}{8L_1^2\normGamma^2}}.
\end{align}
Invoking Assumption~\ref{assumption:input_dependency}, \eqref{lemma:lower_isometry-0.5} yields:
\begin{align}\label{lemma:lower_isometry-1}
&\P\pars{\frac{1}{T}\sumt(u^{\T}Z_t)^2\leq\frac{1}{2}(u^{\T}\barSigma^{1/2})^2}\leq\exp\pars{-\frac{(u^{\T}\barSigma u)T^{1-b_2}}{8b_1L_1^2}}.
\end{align}
Fix any $\e\in(0,1]$ and let $\calN_{\e}$ be an $\e$-net of $\setS^{d_{\theta}-1}$ with respect to the norm $\norm{\cdot}$. Moreover, define the event:
\begin{align*}
\calE=\bigcup_{u_i\in\calN_{\e}}\left\{\frac{1}{T}\sumt(u_i^{\T}Z_t)^2\leq\frac{1}{2}(u_i^{\T}\barSigma^{1/2})^2\right\}.
\end{align*}
By a union bound and \eqref{lemma:lower_isometry-1}, we can write:
\begin{align}\label{lemma:lower_isometry-2.5}
\P(\calE)&\leq\sum_{u_i\in\calN_{\e}}\P\pars{\frac{1}{T}\sumt(u_i^{\T}Z_t)^2\leq\frac{1}{2}(u_i^{\T}\barSigma^{1/2})^2}\nonumber\\
&\leq\sum_{u_i\in\calN_{\e}}\exp\pars{-\frac{(u_i^{\T}\barSigma u_i)T^{1-b_2}}{8b_1L_1^2}}\nonumber\\
&\leq |\calN_{\e}|\max_{u_i\in\calN_{\e}}\exp\pars{-\frac{(u_i^{\T}\barSigma u_i)T^{1-b_2}}{8b_1L_1^2}}\nonumber\\
&\leq |\calN_{\e}|\sup_{u\in\setS^{\dtheta-1}}\exp\pars{-\frac{(u^{\T}\barSigma u)T^{1-b_2}}{8b_1L_1^2}}\nonumber\\
&= |\calN_{\e}|\exp\pars{-\frac{\inf_{u\in\setS^{\dtheta-1}}(u^{\T}\barSigma u)T^{1-b_2}}{8b_1L_1^2}}.
\end{align}
Combining \eqref{lemma:lower_isometry-2.5} with \eqref{covering_number} from Lemma~\ref{lemma:covering_number} and Assumption~\ref{assumption:excitation}, we get:
\begin{align}\label{lemma:lower_isometry-3}
\P(\calE)&\leq \pars{\frac{3}{\e}}^{d_{\theta}}\exp\pars{-\frac{\lambda_0T^{1-b_2}}{8b_1L_1^2}}.
\end{align}
By definition of $\calN_{\e}$, for every $u\in\setS^{d_{\theta}-1}$ there exists $u_i\in\calN_{\e}$ such that $\norm{u-u_i}\leq\e$, and thus on the complement $\calE^c$ of $\calE$ we have:
\begin{align}\label{lemma:lower_isometry-4}
    \frac{1}{T}\sumt(u^{\T}Z_t)^2&\geq\frac{1}{2T}\sumt(u_i^{\T}Z_t)^2-\frac{1}{T}\sumt((u-u_i)^{\T}Z_t)^2\hspace{4cm}(\text{from }\eqref{parallelogram})\nonumber\\
    &\geq \frac{1}{4}(u_i^{\T}\barSigma^{1/2})^2-\frac{1}{T}\sumt((u-u_i)^{\T}Z_t)^2 \hspace{5.1cm}(\calE^c)\nonumber\\
    &\geq \frac{1}{8}(u^{\T}\barSigma^{1/2})^2-\frac{1}{4}((u-u_i)^{\T}\barSigma^{1/2})^2-\frac{1}{T}\sumt((u-u_i)^{\T}Z_t)^2 \hspace{1.2cm}(\text{from }\eqref{parallelogram})\nonumber\\
    &\geq \frac{1}{8}(u^{\T}\barSigma^{1/2})^2-\frac{5}{4}\e^2L_1^2,
\end{align}
where the last step follows from Cauchy-Schwarz inequality, Assumption~\ref{assumption:regression_function}, and $\norm{u-u_i}\leq\e$. By Assumptions~\cref{assumption:regression_function,assumption:excitation}, we have $0<\lambda_0\leq u^{\T}\barSigma u=(u^{\T}\barSigma^{1/2})^2\leq L_1^2$. Hence, setting $\e=\frac{\sqrt{\lambda_0}}{\sqrt{20}L_1}$, we have $\e\in(0,1]$ and \eqref{lemma:lower_isometry-4} yields:
\begin{align}\label{lemma:lower_isometry-5}
   &\frac{1}{T}\sumt(u^{\T}Z_t)^2\geq \frac{1}{8}(u^{\T}\barSigma^{1/2})^2-\frac{\lambda_0}{16}\geq\frac{1}{16}(u^{\T}\barSigma^{1/2})^2 \iff u^{T}\hatSigmaT u\geq \frac{1}{16}u^{\T}\barSigma u.
\end{align}
From \eqref{lemma:lower_isometry-3} and \eqref{lemma:lower_isometry-5} we get:
\begin{align*}
&\P\pars{\exists u\in\setS^{d_{\theta}-1}:u^{\T}\pars{\hatSigmaT-\frac{1}{16}\barSigma} u\leq0}\leq \pars{\frac{15L_1}{\sqrt{\lambda_0}}}^{d_{\theta}}\exp\pars{-\frac{\lambda_0T^{1-b_2}}{8b_1L_1^2}},
\end{align*}
which implies that:
\begin{equation*}
    \P\pars{\hatSigmaT\not\succeq\frac{1}{16}\barSigma}\leq\pars{\frac{15L_1}{\sqrt{\lambda_0}}}^{d_{\theta}}\exp\pars{-\frac{\lambda_0T^{1-b_2}}{8b_1L_1^2}}.
\end{equation*}
To finish the proof, it suffices to show that if $T\geq T_{22}$, where $T_{22}$ is defined as in \eqref{burn_in_time_lower}, we have:
\begin{align*}
    &\pars{\frac{15L_1}{\sqrt{\lambda_0}}}^{d_{\theta}}\exp\pars{-\frac{\lambda_0T^{1-b_2}}{8b_1L_1^2}}\leq\exp\pars{-\frac{\lambda_0T^{1-b_2}}{16b_1L_1^2}}\\
    \iff &\exp\pars{d_{\theta}\log\pars{\frac{15L_1}{\sqrt{\lambda_0}}}-\frac{\lambda_0T^{1-b_2}}{8b_1L_1^2}}\leq\exp\pars{-\frac{\lambda_0T^{1-b_2}}{16b_1L_1^2}}\\
    \iff &T^{1-b_2}\geq \frac{16d_{\theta}b_1L_1^2}{\lambda_0}\log\pars{\frac{15L_1}{\sqrt{\lambda_0}}},
\end{align*}
which is true by the last inequality.
\end{proof}

Now, we can prove Theorem~\ref{theorem:expected_self_normalized_martingale}. We start by showing an upper bound of order $\calO(\log T/T)$ for $\E[\sup_{\theta\in\sfM}\thickbar{M}_T(\theta)]$, where $\thickbar{M}_T(\theta)$ is defined as $\thickbar{M}_T(\hattheta)$ in \eqref{linearized_term} with $\hattheta$ replaced by $\theta$, for all $\theta\in\sfM$.
 
Fix any $\e>0$ and let $\calN_{\e}$ be an $\e$-net of $\sfM_{\star}:=\{\theta-\theta_{\star}\where \theta\in\sfM\}$. Similar to \cite[Lemma 10]{Ziemann2022a}, we can show that:
\begin{align}\label{lemma:martingale_offset_complexity_bound-1.1}
&\E\bracks{\sup_{\theta\in\sfM}\thickbar{M}_T(\theta)}\leq\frac{16\sigma_w^2\log\calN(\e,\sfM,\norm{\cdot})}{T}+32\sigma_wL_1\e.
\end{align}
Particularly, note that invoking Assumption~\ref{assumption:noise}, similar to \cite[Lemma 9]{Ziemann2022a} we have:
\begin{align}\label{lemma:martingale_offset_complexity_bound-1.2}
\E\bracks{\max_{\theta-\theta_{\star}\in\calN_{\e}}\thickbar{M}_T(\theta)}\leq\frac{16\sigma_w^2\log|\calN_{\e}|}{T}.
\end{align}
Moreover, observe that by simply discarding the negative second-order term in $\thickbar{M}_T(\theta)$ and employing Assumption~\ref{assumption:regression_function} along with a standard sub-Gaussian concentration inequality for $\E|W_t|$, we get:
\begin{align}\label{lemma:martingale_offset_complexity_bound-1.3}
    \E\bracks{\sup_{\substack{\theta\in\sfM\\\norm{\theta-\theta_{\star}}\leq\e}}\thickbar{M}_T(\theta)}&\leq \E\bracks{\sup_{\substack{\theta\in\sfM\\\norm{\theta-\theta_{\star}}\leq\e}}\frac{4}{T}\sumt W_t g_t(X_t,\theta)}\nonumber\\
    &\leq \E\bracks{\sup_{\substack{\theta\in\sfM\\\norm{\theta-\theta_{\star}}\leq\e}}\frac{4}{T}\sumt \abs{W_t} \abs{f_t(X_t,\theta)-f_t(X_t,\theta_{\star})}}\nonumber\\
    &\leq \E\bracks{\sup_{\substack{\theta\in\sfM\\\norm{\theta-\theta_{\star}}\leq\e}}\frac{4}{T}\sumt \abs{W_t} L_1\norm{\theta-\theta_{\star}}}\nonumber\\
    &\leq 16\sigma_w L_1\e.
\end{align}
A standard one-step discretization bound (c.f. the proof of\cite[Proposition 5.17]{Wainwright2019}) yields:
\begin{align}\label{lemma:martingale_offset_complexity_bound-1.4}
&\E\bracks{\sup_{\theta\in\sfM}\thickbar{M}_T(\theta)}\leq \E\bracks{\max_{\theta-\theta_{\star}\in\calN_{\e}}\thickbar{M}_T(\theta)}+2\E\bracks{\sup_{\substack{\theta\in\sfM\\\norm{\theta-\theta_{\star}}\leq\e}}\thickbar{M}_T(\theta)}.
\end{align}
Combining \eqref{lemma:martingale_offset_complexity_bound-1.4} with \eqref{lemma:martingale_offset_complexity_bound-1.2} and \eqref{lemma:martingale_offset_complexity_bound-1.3}, and noting that $\calN(\e,\sfM,\norm{\cdot})=|\calN_{\e}|$ (by translation invariance of the metric $\norm{\cdot}$), we complete the proof of \eqref{lemma:martingale_offset_complexity_bound-1.1}.

Setting $\e=\frac{\Btheta}{T}$ in \eqref{lemma:martingale_offset_complexity_bound-1.1} and invoking \eqref{M_covering_number} from Lemma~\ref{lemma:M_covering_number}, we obtain:
\begin{align}\label{lemma:martingale_offset_complexity_bound-2}
&\E\bracks{\sup_{\theta\in\sfM}\thickbar{M}_T(\theta)}\leq\frac{16\sigma_w^2d_{\theta}\log(3T)}{T}+\frac{32\sigma_wL_1\Btheta}{T}.
\end{align}
Given that $\log(3T)\geq\log(3)>1$, for all $T\in\setN_+$, \eqref{lemma:martingale_offset_complexity_bound-2} yields:
\begin{align}\label{theorem:self_normalized_martingale-0.5}
\E\bracks{\sup_{\theta\in\sfM}\thickbar{M}_{T}(\theta)}\leq \frac{16(\sigma_w^2d_{\theta}+2\sigma_wL_1\Btheta)\log (3T)}{T}.
\end{align}
We have:
\begin{align}\label{theorem:self_normalized_martingale-1}
&\E\thickbar{M}_T(\hattheta)\leq\sup_{\theta\in\sfM}\E \thickbar{M}_T(\theta)\leq\E\bracks{\sup_{\theta\in\sfM}\thickbar{M}_{T}(\theta)}\leq \frac{16(\sigma_w^2d_{\theta}+2\sigma_wL_1\Btheta)\log(3T)}{T},
\end{align}
where the last inequality follows from \eqref{theorem:self_normalized_martingale-0.5}. Let us define the event $\calE=\calE_1\cup\calE_2$, where:
\begin{align*}
    \calE_1=\left\{\hatSigmaT\not\preceq8\barSigma\right\},\, \calE_2=\left\{\hatSigmaT\not\succeq\frac{1}{16}\barSigma\right\}.
\end{align*}
By a union bound and Lemmas~\ref{lemma:upper_isometry} and~\ref{lemma:lower_isometry}, if $T\geq\max\{T_{21},T_{22}\}$, where $T_{21}$, $T_{22}$ are given by \eqref{burn_in_time_upper} and \eqref{burn_in_time_lower}, respectively, we have:
\begin{align}\label{theorem:self_normalized_martingale-1.5}
    &\P(\calE)\leq \P(\calE_1)+\P(\calE_2)\leq\exp\pars{-\frac{\lambda_0T^{1-b_2}}{24b_1L_1^2}}+\exp\pars{-\frac{\lambda_0T^{1-b_2}}{16b_1L_1^2}}\leq2\exp\pars{-\frac{\lambda_0T^{1-b_2}}{24b_1L_1^2}}.
\end{align}
By Assumption~\ref{assumption:excitation}, on the complement $\calE^c$ of $\calE$ we have: 
\begin{align}\label{theorem:self_normalized_martingale-1.6}
    \hatSigmaT\succeq\frac{1}{16}\barSigma\succeq\frac{\lambda_0}{16}\mathbb{I}_{\dtheta}\succ0. 
\end{align}
Hence, by maximizing the quadratic function $\thickbar{M}_T(\theta)$ over all $\theta\in\setR^{\dtheta}$ on $\calE^c$ and performing straightforward algebraic manipulations, we obtain:
\begin{align}\label{theorem:self_normalized_martingale-2}
    \thickbar{M}_T(\hattheta)&\leq\sup_{\theta\in\setR^{\dtheta}}\thickbar{M}_T(\theta)\leq \frac{8}{T}\normbig{\sumt W_tZ_t^{\T}(T\hatSigmaT)^{-1/2}}^2= \frac{16}{T}\normbig{\sumt W_tZ_t^{\T}(2T\hatSigmaT)^{-1/2}}^2\nonumber\\
    &\leq \frac{16}{T}\normbig{\sumt W_tZ_t^{\T}(\Sigma+T\hatSigmaT)^{-1/2}}^2,
\end{align}
where the last inequality follows from \eqref{theorem:self_normalized_martingale-1.6} for $\Sigma=\frac{T}{16}\barSigma$. Fix any $\delta\in(0,1)$. Employing \cite[Theorem 4.1]{Ziemann2023} (by Assumption~\ref{assumption:noise}) and \eqref{theorem:self_normalized_martingale-2}, we deduce that with probability at least $1-\delta$, on $\calE^c$ we have:
\begin{align}\label{theorem:self_normalized_martingale-3}
\thickbar{M}_T(\hattheta)&\leq\frac{16}{T}\bigg(4\sigma_w^2\log\pars{\frac{\det(\Sigma+T\hatSigmaT)}{\det(\Sigma)}}+8\sigma_w^2\log\pars{\frac{5}{\delta}}\bigg)\nonumber\\
&=\frac{16}{T}\bigg(4\sigma_w^2\log\det(\mathbb{I}_{\dtheta}+T\hatSigmaT\Sigma^{-1})+8\sigma_w^2\log\pars{\frac{5}{\delta}}\bigg)\nonumber\\
&\leq\frac{16}{T}\bigg(4\sigma_w^2\log\det\pars{\mathbb{I}_{\dtheta}+T(8\barSigma)\pars{\frac{16}{T}\barSigma^{-1}}}+8\sigma_w^2\log\pars{\frac{5}{\delta}}\bigg)  \hspace{2cm}( \calE_1^c )\nonumber\\
&=\frac{16}{T}\bigg(4\sigma_w^2\log129^{d_{\theta}}+8\sigma_w^2\log\pars{\frac{5}{\delta}}\bigg)\nonumber\\
&\leq\frac{128\sigma_w^2}{T}\log\pars{\frac{5\cdot12^{d_{\theta}}}{\delta}}.
\end{align}
For any $s>\frac{128\sigma_w^2}{T}\log(5\cdot12^{d_{\theta}})$, we can select $\delta\in(0,1)$ such that $s=\frac{128\sigma_w^2}{T}\log\pars{\frac{5\cdot12^{d_{\theta}}}{\delta}}$, and  \eqref{theorem:self_normalized_martingale-3} yields:
\begin{align*}
    \P\pars{\thickbar{M}_T(\hattheta)>s\,\big|\,\calE^c}\leq5\cdot12^{d_{\theta}}\exp\pars{-\frac{Ts}{128\sigma_w^2}}.
\end{align*}
Define the event $\tilde{\calE}=\big\{\thickbar{M}_T(\hattheta)<0\big\}$. We have:
\begin{align*}
    &\E\bracks{\thickbar{M}_T(\hattheta)\,\big|\, \calE^c}= \E\bracks{\mathds{1}_{\tilde{\calE}^c}\thickbar{M}_T(\hattheta)\,\big|\, \calE^c}+\E\bracks{\mathds{1}_{\tilde{\calE}}\thickbar{M}_T(\hattheta)\,\big|\, \calE^c}\leq \E\bracks{\mathds{1}_{\tilde{\calE}^c}\thickbar{M}_T(\hattheta)\,\big|\, \calE^c},
\end{align*}
where the last step follows from the fact that $\E\bracks{\mathds{1}_{\tilde{\calE}}\thickbar{M}_T(\hattheta)\,\big|\, \calE^c}<0$, by definition of $\tilde{\calE}$. Therefore, we have:
\begin{align}\label{theorem:self_normalized_martingale-4}
    \E \bracks{\thickbar{M}_T(\hattheta)\,\big|\,\calE^c}&\leq \E\bracks{\mathds{1}_{\tilde{\calE}^c}\thickbar{M}_T(\hattheta)\,\big|\, \calE^c}=\int_{0}^{\infty}\P\pars{\thickbar{M}_T(\hattheta)>s\where\calE^c}ds\nonumber\\
    &\leq\int_0^{\frac{128\sigma_w^2}{T}\log(5\cdot12^{d_{\theta}})}1ds+ \int_{\frac{128\sigma_w^2}{T}\log(5\cdot12^{d_{\theta}})}^{\infty}5\cdot12^{d_{\theta}}\exp\pars{-\frac{Ts}{128\sigma_w^2}}ds\nonumber\\
    &= \frac{128\sigma_w^2\log(5\cdot12^{\dtheta})}{T}+\int_0^{\infty}\exp\pars{-\frac{Ts}{128\sigma_w^2}}ds\nonumber\\
    &=\frac{128\sigma_w^2(\log5+\dtheta\log12)}{T}+\frac{128\sigma_w^2}{T}\nonumber\\
    &\leq\frac{128\sigma_w^2\dtheta\log60}{T}+\frac{128\sigma_w^2\dtheta}{T}\hspace{3.3cm}(\dtheta\in\setN_+)\nonumber\\
    &\leq \frac{653d_{\theta}\sigma_w^2}{T}.
\end{align}
Invoking the law of total expectation along with \eqref{theorem:self_normalized_martingale-1} and \eqref{theorem:self_normalized_martingale-4}, we can write:
\begin{align}\label{theorem:self_normalized_martingale-5}
    \E \thickbar{M}_T(\hattheta)    &=\E\bracks{\thickbar{M}_T(\hattheta)\,\big|\,\calE^c}\P(\calE^c)+\E \bracks{\thickbar{M}_T(\hattheta)\,\big|\,\calE}\P(\calE)\nonumber\\
    &\leq \frac{653d_{\theta}\sigma_w^2}{T}\P(\calE^c)+\frac{16(\sigma_w^2d_{\theta}+2\sigma_wL_1\Btheta)\log(3T)}{T}\P(\calE).
\end{align}
Employing the fact that $\P(\calE^c)\leq1$ as well as \eqref{theorem:self_normalized_martingale-1.5}, \eqref{theorem:self_normalized_martingale-5} yields:
\begin{align}\label{theorem:self_normalized_martingale-6}
 \E \thickbar{M}_T(\hattheta)&\leq \frac{653d_{\theta}\sigma_w^2}{T}+\frac{C\log(3T)}{3T}\exp\pars{-\frac{\lambda_0T^{1-b_2}}{24b_1L_1^2}},   
\end{align}
where $C=96(\sigma_w^2d_{\theta}+2\sigma_wL_1\Btheta)$. Now we choose $T$ large enough so that:
\begin{align*}
    &C\exp\pars{-\frac{\lambda_0T^{1-b_2}}{24b_1L_1^2}}\leq\exp\pars{-\frac{\lambda_0T^{1-b_2}}{48b_1L_1^2}}\nonumber\\
    \iff &\,\exp\pars{\log C-\frac{\lambda_0T^{1-b_2}}{24b_1L_1^2}}\leq \exp\pars{-\frac{\lambda_0T^{1-b_2}}{48b_1L_1^2}}\nonumber\\
    \iff &\,T^{1-b_2}\geq\frac{48b_1L_1^2}{\lambda_0}\log C.
\end{align*}
Moreover, for any fixed $\gamma\in(0,1)$, we require that:
\begin{align*}
    &\exp\pars{-\frac{\lambda_0T^{1-b_2}}{48b_1L_1^2}}\leq \frac{1}{T^{1+\gamma}}\iff T^{1-b_2}\geq \frac{48b_1L_1^2}{\lambda_0}\log T^{1+\gamma}\iff T^{1-b_2}\geq \frac{48b_1L_1^2(1+\gamma)}{\lambda_0(1-b_2)}\log T^{1-b_2}.
\end{align*}
By \cite[Lemma A.4]{Simchowitz2018}, the latter inequality holds when:
\begin{align*}
    T^{1-b_2}\geq \frac{96b_1L_1^2(1+\gamma)}{\lambda_0(1-b_2)}\log\pars{\frac{192b_1L_1^2(1+\gamma)}{\lambda_0(1-b_2)}},
\end{align*}
and given that $\gamma<1$, it suffices to have:
\begin{align*}
    T^{1-b_2}\geq \frac{192b_1L_1^2}{\lambda_0(1-b_2)}\log\pars{\frac{384b_1L_1^2}{\lambda_0(1-b_2)}}.
\end{align*}
Combining all of our requirements on $T$, we need to have $T\geq\max\{T_{21},T_{22},T_{23}\}$, where:
\begin{align}\label{burn_in_T23}
    &T_{23}= \max\left\{
    \pars{\frac{192b_1L_1^2}{\lambda_0(1-b_2)}\log\pars{\frac{384b_1L_1^2}{\lambda_0(1-b_2)}}}^{\frac{1}{1-b_2}},\pars{\frac{48b_1L_1^2\log (96(\sigma_w^2d_{\theta}+2\sigma_wL_1\Btheta))}{\lambda_0}}^{\frac{1}{1-b_2}}\right\}.\nonumber\\
\end{align}
That is, we set:
\begin{align}\label{burn_in_time_2_exact}
    T_2=\max\{T_{21},T_{22},T_{23}\},
\end{align}
where $T_{21}$, $T_{22}$, $T_{23}$ are given by \eqref{burn_in_time_upper}, \eqref{burn_in_time_lower}, and \eqref{burn_in_T23}, respectively.
Then, given that $\log(3T)\leq3T$, for all $T\in\setN_+$, \eqref{theorem:self_normalized_martingale-5} yields:
\begin{align*}
    \E \thickbar{M}_T(\hattheta)\leq\frac{653d_{\theta}\sigma_w^2}{T}+\frac{1}{T^{1+\gamma}},
\end{align*}
which completes the proof.

\section{Proof of Corollary~\ref{corollary:expected_M__hattheta_bound}}\label{Proof_of_Corollary_expected_M_hattheta_bound}
  
We want to find an upper bound for $\E M_T(\hattheta)$, employing the inequality:
\begin{align}\label{corollary:corollary2-1}
    \E M_T(\hattheta)\leq &\;\E\thickbar{M}_T(\hattheta)+\E\bracks{\normbig{\frac{2}{T}\sumt W_tV_t}\norm{\hat{\theta}-\theta_{\star}}^2}+\frac{L_2^2}{4}\E\norm{\hat{\theta}-\theta_{\star}}^4,
\end{align}
which follows directly from Lemma~\ref{lemma:taylor_bound} (see \eqref{theorem:theorem2-2}). The first term on the right-hand side of \eqref{corollary:corollary2-1} can be bounded using \eqref{expected_self_normalized_martingale} from Theorem~\ref{theorem:expected_self_normalized_martingale}. 

We proceed with bounding the second term on the right-hand side of \eqref{corollary:corollary2-1}. For every $\theta\in\sfM$, let $M_T(\theta)$ be defined as $M_T(\hattheta)$ with $\hattheta$ replaced by $\theta$. Moreover, set $\sfM_{\star}=\{\theta-\theta_{\star}\where\theta\in\sfM\}$. Invoking Assumption~\ref{assumption:regression_function}, similar to \cite[Theorem 4.2.2]{Ziemann2022thesis}, we can show that for any fixed $u,v,w\geq0$, with probability at least $1-3\exp(-\frac{u^2}{2})-\exp(-\frac{v}{2})-e\exp(-w)$, we have:
\begin{align*}
    \sup_{\theta\in\sfM}M_T(\theta)\leq\inf_{\alpha>0,\,\delta\in[0,\alpha]} 16\bigg(&wL_1\delta\sigma_w+\sqrt{\frac{\sigma_w^2}{T}}\int_{\delta/2}^{\alpha}\sqrt{\log\calN(\e,\sfM_{\star},\norm{\cdot})}d\e+\frac{v\sigma_w^2}{T}\nonumber\\
    &+\frac{\sigma_w^2\log\calN(\alpha,\sfM_{\star},\norm{\cdot})}{T}+\frac{uL_1\alpha\sigma_w}{\sqrt{T}}+L_1^2\alpha^2\bigg).
\end{align*}
Note that for any $\e>0$, we have $\calN(\e,\sfM_{\star},\norm{\cdot})=\calN(\e,\sfM,\norm{\cdot})$, by translation invariance of the metric $\norm{\cdot}$. Hence, by selecting $\alpha=\frac{\Btheta}{T}$, $\delta=\frac{2\Btheta}{T}$, $u=2\sqrt{\log(3T)}$, $v=2\log(3T^2)$, and $w=1+\log(3T^2)$, we conclude that with probability at least $1-\frac{1}{T^2}$:
\begin{align}\label{corollary:corollary2-2}
    &\sup_{\theta\in\sfM}M_T(\theta)\leq 16\pars{w L_1\delta\sigma_w+\frac{v\sigma_w^2}{T}+\frac{\sigma_w^2\log\calN(\Btheta/T,\sfM,\norm{\cdot})}{T}+\frac{u L_1\Btheta\sigma_w}{T^{3/2}}+\frac{L_1^2\Btheta^2}{T^2}}.
\end{align}
Combining \eqref{corollary:corollary2-2} with \eqref{M_covering_number} from Lemma~\ref{lemma:M_covering_number} and performing straightforward algebraic manipulations, we obtain:
\begin{align}\label{corollary:corollary2-3}
    \P\pars{\sup_{\theta\in\sfM}M_T(\theta)> r^2}\leq\frac{1}{T^2},
\end{align}
where $r^2=C_1\frac{\log(3T)}{T}$ with:
\begin{align*}
C_1=16(d_{\theta}\sigma_w^2+4\sigma_w^2+10\sigma_w L_1\Btheta+L_1^2\Btheta^2).
\end{align*}
Define the event:
\begin{align*}
    \calE=\left\{\norm{\hattheta-\theta_{\star}}>\thickbar{r}\right\},
\end{align*}
where:
\begin{align*}
    \thickbar{r}=\sqrt{8ar^2+\frac{2aL_1^2\Btheta^2}{T}}.
\end{align*}
If $T\geq T_1$, where $T_1$ is defined as in \eqref{burn_in_time_1_exact}, from \eqref{inequality:theorem2_old} and \eqref{corollary:corollary2-3} we can write:
\begin{align}\label{corollary:corollary2-4}
    \P(\calE)&=\P\pars{\norm{\hattheta-\theta_{\star}}^2>\thickbar{r}^2}\nonumber\\    &\leq\P\pars{8aM_T(\hattheta)+\frac{2aL_1^2\Btheta^2}{T}>8ar^2+\frac{2aL_1^2\Btheta^2}{T}}\nonumber\\
    &=\P\pars{M_T(\hattheta)> r^2}\nonumber\\
    &\leq\P\pars{\sup_{\theta\in\sfM}M_T(\theta)> r^2}\leq\frac{1}{T^2}.
\end{align}
We can decompose the second term on the right-hand side of \eqref{corollary:corollary2-1} as follows:
\begin{align}\label{corollary:corollary2-5}
&\E\bracks{\normbig{\frac{2}{T}\sumt W_tV_t}\norm{\hat{\theta}-\theta_{\star}}^2}=E_1+E_2,
\end{align}
where:
\begin{equation*}
E_1=\E\bracks{\mathds{1}_{\calE^c}\normbig{\frac{2}{T}\sumt W_tV_t}\norm{\hat{\theta}-\theta_{\star}}^2}
\end{equation*}
and:
\begin{equation*}
E_2=\E\bracks{\mathds{1}_{\calE}\normbig{\frac{2}{T}\sumt W_tV_t}\norm{\hat{\theta}-\theta_{\star}}^2}.
\end{equation*}
First, we want to bound the term $E_1$. Let us define:
\begin{align*}
    \sfM_{\thickbar{r}}=\left\{\theta\in\sfM\,\Big|\,\norm{\theta-\theta_{\star}}\leq\thickbar{r}\right\},
\end{align*}
and (somewhat abusing notation):
\begin{align*}
    V_t(\theta)=\gradtheta^2 f_t(X_t,\alpha_t(\theta-\theta_{\star})+\theta_{\star}),
\end{align*}
where $\alpha_t$ are the same as in the definition of $V_t$ in \eqref{lemma:taylor_bound-2}, for all $\theta\in\sfM$. Then, we have:
\begin{align}\label{corollary:corollary2-6}
    E_1&\leq\E\bracks{\sup_{\theta\in\sfM_{\thickbar{r}}}\normbig{\frac{2}{T}\sumt W_tV_t(\theta)}\norm{\theta-\theta_{\star}}^2}\leq\thickbar{r}^2\,\E\bracks{\sup_{\theta\in\sfM_{\thickbar{r}}}\normbig{\frac{2}{T}\sumt W_tV_t(\theta)}}.
\end{align}
Fix any $\theta\in\sfM_{\thickbar{r}}$ and any $u\in\setS^{d_{\theta}-1}$, and define the random variable:
\begin{equation*}
    R_u(\theta)=u^{\T}\pars{\frac{2}{T}\sumt W_tV_t(\theta)}u.
\end{equation*}
By tower property and Assumptions~\ref{assumption:noise},~\ref{assumption:regression_function}, for any $\lambda\in\setR$, we can write:
\begin{align*}
\E\exp\pars{\lambda R_u(\theta)}&=\E\exp\pars{\frac{2\lambda}{T}\sumt W_t(u^{\T}V_t(\theta)u)}\nonumber\\
&=\E\bigg[\exp\bigg(\frac{2\lambda}{T}\sum_{t=0}^{T-2} W_t(u^{\T}V_t(\theta)u)\bigg)\E\bigg[\exp\pars{\frac{2\lambda}{T} W_{T-1}(u^{\T}V_{T-1}(\theta)u)}\,\Big|\,\calF_{T-2}\bigg]\bigg]\nonumber\\
&\leq \E\bigg[\exp\bigg(\frac{2\lambda}{T}\sum_{t=0}^{T-2} W_t(u^{\T}V_t(\theta)u)\bigg)\exp\pars{\frac{2\lambda^2\sigma_w^2L_2^2}{T^2}}\bigg]\nonumber\\
&\leq \ldots\nonumber\\
&\leq \E\exp\pars{\frac{2\lambda^2\sigma_w^2L_2^2}{T}},
\end{align*}
which implies that $R_u(\theta)$ is sub-Gaussian with parameter $\sigma=(2\sigma_wL_2)/\sqrt{T}$. From standard results for sub-Gaussian random variables we deduce that for any $s>0$:
\begin{align}\label{corollary:corollary2-7}
    \P\pars{\abs{R_u(\theta)}> s}\leq2\exp\pars{-\frac{Ts^2}{8\sigma_w^2L_2^2}}. 
\end{align}
Fix any $\e\in(0,1/2)$ and let $\calN_{\e}$ be an $\e$-net of $\setS^{d_{\theta}-1}$. Then, from \cite[Lemma 2.5]{Ziemann2023} and \eqref{corollary:corollary2-7} we conclude that:
\begin{align}\label{corollary:corollary2-8}
    \P\pars{\normbig{\frac{2}{T}\sumt W_tV_t(\theta)}> s}
    &\leq \pars{\frac{3}{\e}}^{d_{\theta}}\max_{u\in\calN_{\e}}\P\pars{\abs{R_u(\theta)}>(1-2\e)s}\nonumber\\
    &\leq 2\pars{\frac{3}{\e}}^{d_{\theta}}\exp\pars{-\frac{T(1-2\e)^2s^2}{8\sigma_w^2L_2^2}}.
\end{align}
Setting $\e=1/4$, \eqref{corollary:corollary2-8} yields:
\begin{align}\label{corollary:corollary2-8.5}
\P\pars{\normbig{\frac{2}{T}\sumt W_tV_t(\theta)}>s}&\leq2\cdot12^{d_{\theta}}\exp\pars{-\frac{Ts^2}{32\sigma_w^2L_2^2}}\nonumber\\
&\leq24^{d_{\theta}}\exp\pars{-\frac{Ts^2}{32\sigma_w^2L_2^2}},\hspace{0.7cm}(\dtheta\in\setN_+)
\end{align}
for all $\theta\in\sfM$. Let $\calN_{\thickbar{r}}'$ be a $\thickbar{r}$-net of $\sfM_{\thickbar{r},\star}:=\{\theta-\theta_{\star}\where\theta\in\sfM_{\thickbar{r}}\}$. Then, by a union bound, \eqref{corollary:corollary2-8.5} yields:
\begin{align}\label{corollary:corollary2-8.75}
&\P\pars{\max_{\theta_i\in\calN_{\thickbar{r}}'}\normbig{\frac{2}{T}\sumt W_tV_t(\theta_i+\theta_{\star})}>s}\leq|\calN_{\thickbar{r}}'|\,24^{d_{\theta}}\exp\pars{-\frac{Ts^2}{32\sigma_w^2L_2^2}}.
\end{align}
Similar to \eqref{M_covering_number} from Lemma~\ref{lemma:M_covering_number}, we can show that:
\begin{align*}
    |\calN_{\thickbar{r}}'|=\calN(\thickbar{r},\sfM_{\thickbar{r},\star},\norm{\cdot})\leq\pars{\frac{3\thickbar{r}}{\thickbar{r}}}^{\dtheta}=3^{\dtheta}.
\end{align*}
Hence, from \eqref{corollary:corollary2-8.75} we deduce that:
\begin{align*}
&\P\pars{\max_{\theta_i\in\calN_{\thickbar{r}}'}\normbig{\frac{2}{T}\sumt W_tV_t(\theta_i+\theta_{\star})}>s}\leq72^{d_{\theta}}\exp\pars{-\frac{Ts^2}{32\sigma_w^2L_2^2}}
\end{align*}
and thus we have:
\begin{align}\label{corollary:corollary2-9}
\E\bracks{\max_{\theta_i\in\calN_{\thickbar{r}}'}\normbig{\frac{2}{T}\sumt W_tV_t(\theta_i+\theta_{\star})}}
&=\int_0^{\infty}\P\pars{\max_{\theta_i\in\calN_{\thickbar{r}}'}\normbig{\frac{2}{T}\sumt W_tV_t(\theta_i+\theta_{\star})}>s}ds\nonumber\\
&\leq\int_0^{\infty}\min\left\{1,72^{d_{\theta}}\exp\pars{-\frac{Ts^2}{32\sigma_w^2L_2^2}}\right\}ds\nonumber\\
&=\int_0^{\infty}\min\left\{1,\exp\pars{\log72^{\dtheta}-\frac{Ts^2}{32\sigma_w^2L_2^2}}\right\}ds\nonumber\\
&=\sqrt{\frac{32\sigma_w^2L_2^2\log 72^{d_{\theta}}}{T}}+\int_0^{\infty}\exp\pars{-\frac{Ts^2}{32\sigma_w^2L_2^2}}ds\nonumber\\
&=\sqrt{\frac{32\dtheta\sigma_w^2L_2^2\log 72}{T}}+\sqrt{\frac{8\pi\sigma_w^2L_2^2}{T}}\nonumber\\
&\leq \sqrt{\frac{\dtheta\sigma_w^2L_2^2}{T}}\pars{\sqrt{32\log72}+\sqrt{8\pi}} \hspace{1cm}(\dtheta\in\setN_+)\nonumber\\
&\leq \frac{18\sqrt{d_{\theta}}\sigma_wL_2}{\sqrt{T}}.
\end{align}
By definition of $\calN_{\thickbar{r}}'$, for every $\theta\in\sfM_{\thickbar{r}}$ there exists $\theta_i\in\calN_{\thickbar{r}}'$ such that $\norm{(\theta-\theta_{\star})-\theta_i}\leq \thickbar{r}$. We define the operator $\pi(\cdot)$ that projects each $\theta\in\sfM_{\thickbar{r}}$ on the corresponding vector $\theta_i+\theta_{\star}$. Then, for every $\theta\in\sfM_{\thickbar{r}}$, we have $\norm{\theta-\pi(\theta)}\leq \thickbar{r}$. Then, employing the triangle inequality along with Assumption~\ref{assumption:regression_function}, we can write:
\begin{align*}
  \E\bracks{\sup_{\theta\in\sfM_{\thickbar{r}}}\normbig{\frac{2}{T}\sumt W_tV_t(\theta)}}
  &\leq \E\Bigg[\sup_{\substack{\theta\in\sfM_{\thickbar{r}}}}\Bigg(\normbig{\frac{2}{T}\sumt W_t(V_t(\theta)-V_t(\pi(\theta)))}+\normbig{\frac{2}{T}\sumt W_tV_t(\pi(\theta))}\Bigg)\Bigg]\nonumber\\
  &\leq \E\Bigg[\sup_{\substack{\theta\in\sfM_{\thickbar{r}}}}\Bigg(\frac{2}{T}\sumt |W_t|\norm{V_t(\theta)-V_t(\pi(\theta))}+\normbig{\frac{2}{T}\sumt W_tV_t(\pi(\theta))}\Bigg)\Bigg]\nonumber\\
  &\leq \E\bracks{\frac{2}{T}\sumt \abs{W_t}}L_3\thickbar{r}+\E\bracks{\max_{\theta_i\in\calN_{\thickbar{r}}'}\normbig{\frac{2}{T}\sumt W_tV_t(\theta_i+\theta_{\star})}}.
\end{align*}
Invoking \eqref{corollary:corollary2-9} and a standard sub-Gaussian concentration inequality for $\E|W_t|$, the above inequality yields:
\begin{align}\label{corollary:corollary2-10} &\E\bracks{\sup_{\theta\in\sfM_{\thickbar{r}}}\normbig{\frac{2}{T}\sumt W_tV_t(\theta)}}\leq 8\sigma_wL_3\thickbar{r}+\frac{18\sqrt{d_{\theta}}\sigma_wL_2}{\sqrt{T}}.
\end{align}
Combining inequalities \eqref{corollary:corollary2-6} and \eqref{corollary:corollary2-10}, we obtain:
\begin{align}\label{corollary:corollary2-11}
E_1\leq\thickbar{r}^2\pars{8\sigma_wL_3\thickbar{r}+\frac{18\sqrt{d_{\theta}}\sigma_wL_2}{\sqrt{T}}}.
\end{align}
Given that $1<\log3\leq\log(3T)$, for all $T\in\setN_+$, by definition of $\thickbar{r}$ we have:
\begin{align}\label{corollary:corollary2-11.1}
    \thickbar{r}\leq\sqrt{\frac{(8aC_1+2aL_1^2\Btheta^2)\log(3T)}{T}}
\end{align}
and \eqref{corollary:corollary2-11} yields:
\begin{align}\label{corollary:corollary2-11.2}
    E_1&\leq \pars{\frac{(8aC_1+2aL_1^2\Btheta^2)\log(3T)}{T}}\pars{8\sigma_w L_3\sqrt{\frac{(8aC_1+2aL_1^2\Btheta^2)\log(3T)}{T}}+\frac{18\sqrt{d_{\theta}}\sigma_wL_2}{\sqrt{T}}}\nonumber\\
    &\leq \pars{\frac{(8aC_1+2aL_1^2\Btheta^2)\log(3T)}{T}}\pars{\pars{8\sigma_wL_3\sqrt{8aC_1+2aL_1^2\Btheta^2}+18\sqrt{\dtheta}\sigma_wL_2}\sqrt{\frac{\log(3T)}{T}}}\nonumber\\
    &=C_2\pars{\frac{\log(3T)}{T}}^{3/2},
\end{align}
where:
\begin{align*}
C_2=&\pars{8aC_1+2aL_1^2\Btheta^2}\pars{8\sigma_wL_3\sqrt{8aC_1+2aL_1^2\Btheta^2}+18\sqrt{\dtheta}\sigma_wL_2}.
\end{align*}
Now, we focus on bounding the term $E_2$ appearing on the right-hand side of \eqref{corollary:corollary2-5}. Invoking the Cauchy-Schwarz inequality, we can write:
\begin{align}\label{corollary:corollary2-12}
    E_2&\leq\sqrt{\E\bracks{\normbig{\frac{2}{T}\sumt W_tV_t}^2\norm{\hat{\theta}-\theta_{\star}}^4}\E\mathds{1}_{\calE}^2}\nonumber\\
&\leq\sqrt{\E\bracks{\sup_{\theta\in\sfM}\normbig{\frac{2}{T}\sumt W_tV_t(\theta)}^2\norm{\theta-\theta_{\star}}^4}\P(\calE)}\hspace{1cm}(\E\mathds{1}_{\calE}^2=\E\mathds{1}_{\calE}=\P(\calE))\nonumber\\
&\leq4\Btheta^2\sqrt{\E\bracks{\sup_{\theta\in\sfM}\normbig{\frac{2}{T}\sumt W_tV_t(\theta)}^2}\P(\calE)}.
\end{align}
Let $\calN''$ be a $\Btheta/\sqrt{T}$-net of $\sfM$. By definition of $\calN''$, for every $\theta\in\sfM$ there exists $\theta_i\in\calN''$ such that $\norm{\theta-\theta_i}\leq \Btheta/\sqrt{T}$. Somewhat abusing notation, we define $\pi(\cdot)$ as the operator that projects each $\theta\in\sfM$ on the corresponding vector $\theta_i\in\calN''$. Then, by a union bound, \eqref{corollary:corollary2-8.5} yields:
\begin{align}\label{corollary:corollary2-12.75}
&\P\pars{\max_{\theta_i\in\calN''}\normbig{\frac{2}{T}\sumt W_tV_t(\theta_i)}>s}\leq|\calN''|\,24^{d_{\theta}}\exp\pars{-\frac{Ts^2}{32\sigma_w^2L_2^2}}.
\end{align}
Employing \eqref{M_covering_number} from Lemma~\ref{lemma:M_covering_number}, we get:
\begin{align*}
    |\calN''|=\calN(\Btheta/\sqrt{T},\sfM,\norm{\cdot})\leq(3\sqrt{T})^{\dtheta},
\end{align*}
and \eqref{corollary:corollary2-12.75} yields:
\begin{align*}
&\P\pars{\max_{\theta_i\in\calN''}\normbig{\frac{2}{T}\sumt W_tV_t(\theta_i)}>s}\leq(72\sqrt{T})^{d_{\theta}}\exp\pars{-\frac{Ts^2}{32\sigma_w^2L_2^2}}.
\end{align*}
Therefore, we have:
\begin{align}\label{corollary:corollary2-13}
\E\bracks{\max_{\theta_i\in\calN''}\normbig{\frac{2}{T}\sumt W_tV_t(\theta_i)}^2}
&=\int_0^{\infty}\P\pars{\max_{\theta_i\in\calN''}\normbig{\frac{2}{T}\sumt W_tV_t(\theta_i)}^2>s}ds\nonumber\\
&=\int_0^{\infty}\P\pars{\max_{\theta_i\in\calN''}\normbig{\frac{2}{T}\sumt W_tV_t(\theta_i)}>\sqrt{s}}ds\nonumber\\
&\leq\int_0^{\infty}\min\left\{1,(72\sqrt{T})^{d_{\theta}}\exp\pars{-\frac{Ts}{32\sigma_w^2L_2^2}}\right\}ds\nonumber\\
&=\int_0^{\infty}\min\left\{1,\exp\pars{\log(72\sqrt{T})^{d_{\theta}}-\frac{Ts}{32\sigma_w^2L_2^2}}\right\}ds\nonumber\\
&=\frac{32\sigma_w^2L_2^2\log(72\sqrt{T})^{\dtheta}}{T}+\int_0^{\infty}\exp\pars{-\frac{Ts}{32\sigma_w^2L_2^2}}ds\nonumber\\
&= \frac{32d_{\theta}\sigma_w^2L_2^2\log(72\sqrt{T})}{T}+\frac{32\sigma_w^2L_2^2}{T}\nonumber\\
&\leq\frac{32\dtheta\sigma_w^2L_2^2}{T}\pars{\log(72\sqrt{T})+1}\hspace{1cm}(\dtheta\in\setN_+)\nonumber\\
&\leq \frac{176d_{\theta}\sigma_w^2L_2^2\log(3T)}{T},
\end{align}
where the last inequality follows by using the fact that $1<\log3\leq\log(3T)$, for all $T\in\setN_+$, and performing straightforward algebraic manipulations. Employing \eqref{parallelogram} from Lemma~\ref{lemma:parallelogram} along with Cauchy-Schwarz inequality and Assumption~\ref{assumption:regression_function}, we can write:
\begin{align*}
    \normbig{\frac{2}{T}\sumt W_tV_t(\theta)}^2&\leq 2\normbig{\frac{2}{T}\sumt W_t(V_t(\theta)-V_t(\pi(\theta)))}^2+2\normbig{\frac{2}{T}\sumt W_tV_t(\pi(\theta))}^2\nonumber\\
    &\leq \frac{8}{T^2}\pars{\sumt W_t^2}\pars{\sumt \norm{V_t(\theta)-V_t(\pi(\theta))}^2}+2\normbig{\frac{2}{T}\sumt W_tV_t(\pi(\theta))}^2\nonumber\\
    &\leq \pars{\frac{8}{T}\sumt W_t^2}\pars{L_3\norm{\theta-\pi(\theta)}}^2+2\normbig{\frac{2}{T}\sumt W_tV_t(\pi(\theta))}^2\nonumber\\
    &\leq \pars{\frac{8}{T}\sumt W_t^2}\pars{\frac{L_3\Btheta}{\sqrt{T}}}^2+2\normbig{\frac{2}{T}\sumt W_tV_t(\pi(\theta))}^2,
\end{align*}
for all $\theta\in\sfM$. Hence, we have:
\begin{align}\label{corollary:corollary2-13.5}
    \E\bracks{\sup_{\theta\in\sfM}\normbig{\frac{2}{T}\sumt W_tV_t(\theta)}^2}
    &\leq \E\bracks{\frac{8}{T}\sumt W_t^2}\pars{\frac{L_3\Btheta}{\sqrt{T}}}^2+2\E\bracks{\sup_{\theta\in\sfM}\normbig{\frac{2}{T}\sumt W_tV_t(\pi(\theta))}^2}\nonumber\\
  &= \E\bracks{\frac{8}{T}\sumt W_t^2}\pars{\frac{L_3\Btheta}{\sqrt{T}}}^2+2\E\bracks{\max_{\theta_i\in\calN''}\normbig{\frac{2}{T}\sumt W_tV_t(\theta_i)}^2}.
\end{align}
Invoking \eqref{corollary:corollary2-13} and a standard sub-Gaussian concentration inequality for $\E W_t^2$, \eqref{corollary:corollary2-13.5} yields:
\begin{align}\label{corollary:corollary2-14}
    \E\bracks{\sup_{\theta\in\sfM}\normbig{\frac{2}{T}\sumt W_tV_t(\theta)}^2}
  &\leq \frac{32\sigma_w^2L_3^2\Btheta^2}{T}+\frac{352d_{\theta}\sigma_w^2L_2^2\log(3T)}{T}\nonumber\\
  &\leq \frac{2d_{\theta}\sigma_w^2(16L_3^2\Btheta^2+176L_2^2)\log (3T)}{T},
\end{align}
where the last step follows from the fact that $\dtheta\in\setN_+$ and $1<\log3\leq\log(3T)$, for all $T\in\setN_+$. Combining \eqref{corollary:corollary2-12} with \eqref{corollary:corollary2-4} and \eqref{corollary:corollary2-14}, we obtain:
\begin{align}\label{corollary:corollary2-15}
    E_2\leq \frac{4\Btheta^2\sqrt{2d_{\theta}\sigma_w^2(16L_3^2\Btheta^2+176L_2^2)\log(3T)}}{T^{3/2}}.
\end{align}
Given that $1<\log3\leq\log(3T)\leq\log^3(3T)$, for all $T\in\setN_+$, \eqref{corollary:corollary2-15} yields:
\begin{align}\label{corollary:corollary2-15.1}
    E_2\leq C_3\pars{\frac{\log(3T)}{T}}^{3/2},
\end{align}
where:
\begin{align*}
C_3=4\Btheta^2\sqrt{2d_{\theta}\sigma_w^2(16L_3^2\Btheta^2+176L_2^2)}.
\end{align*}
Employing \eqref{corollary:corollary2-11.2} and \eqref{corollary:corollary2-15.1}, \eqref{corollary:corollary2-5} yields:
\begin{align}\label{corollary:corollary2-16}
   &\E\bracks{\normbig{\frac{2}{T}\sumt W_tV_t}\norm{\hat{\theta}-\theta_{\star}}^2}\leq(C_2+C_3)\pars{\frac{\log(3T)}{T}}^{3/2}.
\end{align}

Next, we derive a bound for the third term on the right-hand side of \eqref{corollary:corollary2-1}. Notice that  we can bound $\E\norm{\hattheta-\theta_{\star}}^4$ as follows:
\begin{align}\label{corollary:corollary2-20}
\E\norm{\hattheta-\theta_{\star}}^4&=\E\mathds{1}_{\calE^c}\norm{\hattheta-\theta_{\star}}^4+\E\mathds{1}_{\calE}\norm{\hattheta-\theta_{\star}}^4\nonumber\\
&\leq\pars{\sup_{\theta\in\sfM_{\thickbar{r}}}\norm{\theta-\theta_{\star}}^4}\P(\calE^c)+\pars{\sup_{\theta\in\sfM}\norm{\theta-\theta_{\star}}^4}\P(\calE)\hspace{0.4cm}(\E\mathds{1}_{\calE}=\P(\calE),\E\mathds{1}_{\calE^c}=\P(\calE^c))\nonumber\\
&\leq \thickbar{r}^4+\frac{16\Btheta^4}{T^2},
\end{align}
where the last inequality follows from the definition of $\sfM_{\thickbar{r}}$, the fact that $\P(\calE^c)\leq1$, and \eqref{corollary:corollary2-4}. From \eqref{corollary:corollary2-11.1} and \eqref{corollary:corollary2-20} we conclude that: 
\begin{align}\label{corollary:corollary2-21}
\E\norm{\hattheta-\theta_{\star}}^4&\leq \pars{\frac{(8aC_1+2aL_1^2\Btheta^2)\log(3T)}{T}}^2+\frac{16\Btheta^4}{T^2}.   
\end{align}
Given that $1<\log3\leq\log(3T)\leq3T$, for all $T\in\setN_+$, \eqref{corollary:corollary2-21} implies that the third term on the right-hand side of \eqref{corollary:corollary2-1} can be bounded as follows:
\begin{align}\label{corollary:corollary2-22}
\frac{L_2^2}{4}\E\norm{\hattheta-\theta_{\star}}^4
&\leq \frac{L_2^2}{4} \pars{(8aC_1+2aL_1^2\Btheta^2)^2+16\Btheta^4}\pars{\frac{\log(3T)}{T}}^2\nonumber\\
&=\frac{9L_2^2}{4} \pars{(8aC_1+2aL_1^2\Btheta^2)^2+16\Btheta^4}\pars{\frac{\log(3T)}{3T}}^2\nonumber\\
&\leq\frac{9L_2^2}{4} \pars{(8aC_1+2aL_1^2\Btheta^2)^2+16\Btheta^4}\pars{\frac{\log(3T)}{3T}}^{3/2}\nonumber\\
&\leq C_4\pars{\frac{\log(3T)}{T}}^{3/2},
\end{align}
where:
\begin{align*}
    C_4=L_2^2\pars{(8aC_1+2aL_1^2\Btheta^2)^2+16\Btheta^4}.
\end{align*}

Fix any $\gamma\in(0,1/2)$. Recall that the first term on the right-hand side of \eqref{corollary:corollary2-1} can be bounded using \eqref{expected_self_normalized_martingale} from Theorem~\ref{theorem:expected_self_normalized_martingale}, if $T\geq T_2$, where $T_2$ is defined as in \eqref{burn_in_time_2_exact}. From \eqref{corollary:corollary2-1}, \eqref{corollary:corollary2-16}, and \eqref{corollary:corollary2-22} we deduce that we can finish the proof of \eqref{expected_M__hattheta_bound} by choosing $T$ large enough so that:
\begin{align*}
  \max\{(C_2+C_3),C_4\}\pars{\frac{\log(3T)}{T}}^{3/2}\leq \frac{1}{2T^{1+\gamma}}.
\end{align*}
Setting $C_5=2\max\{(C_2+C_3),C_4\}$, it suffices to have:
\begin{align*}
    &T^{1/2-\gamma}\geq C_5\log^{3/2}(3T)\\
    \iff &T^{(1-2\gamma)/3}\geq C_5^{2/3}\log(3T)\\
    \iff &T^{(1-2\gamma)/3}\geq C_5^{2/3}\bigg(\log3+\frac{3}{1-2\gamma}\log T^{(1-2\gamma)/3}\bigg),
\end{align*}
which is satisfied if:
\begin{align*}
    T^{(1-2\gamma)/3}\geq C_5^{2/3}\log3,\; T^{(1-2\gamma)/3}\geq \frac{3C_5^{2/3}}{1-2\gamma}\log T^{(1-2\gamma)/3}.
\end{align*}
By \cite[Lemma A.4]{Simchowitz2018}, the latter inequality holds when:
\begin{align*}
    T^{(1-2\gamma)/3}\geq\frac{6C_5^{2/3}}{1-2\gamma}\log\pars{\frac{12C_5^{2/3}}{1-2\gamma}}.
\end{align*}
Combining all of our requirements on $T$, we need to have $T\geq\max\{T_1,T_2,T_3\}$, where $T_1$ and $T_2$ are given by \eqref{burn_in_time_1_exact} and \eqref{burn_in_time_2_exact}, respectively, and:
\begin{align*}
    T_3=&\max\left\{\pars{C_5^{2/3}\log3}^{3/(1-2\gamma)},\pars{\frac{6C_5^{2/3}}{1-2\gamma}\log\pars{\frac{12C_5^{2/3}}{1-2\gamma}}}^{3/(1-2\gamma)}\right\},
\end{align*}
which completes the proof.

\end{document}